\let\inf\undef
\DeclareMathOperator*{\inf}{\vphantom{p}inf}
\let\sup\undef
\DeclareMathOperator*{\sup}{\vphantom{p}sup}
\let\max\undef
\DeclareMathOperator*{\max}{\vphantom{p}max}
\let\min\undef
\DeclareMathOperator*{\min}{\vphantom{p}min}
\DeclareMathOperator*{\Ex}{\vphantom{p}\mathbb{E}}
\let\inf\undef
\DeclareMathOperator*{\inf}{\vphantom{p}inf}
\let\sup\undef
\DeclareMathOperator*{\sup}{\vphantom{p}sup}
\newcommand{\inner}[1]{\left\langle #1 \right\rangle}
\newcommand{\cA}{\ensuremath{\mathcal A}}
\newcommand{\cB}{\ensuremath{\mathcal B}}
\newcommand{\E}{\ensuremath{\mathbb E}} 
\newcommand{\F}{\ensuremath{\mathcal F}}
\newcommand{\cN}{\ensuremath{\mathcal N}} 
\newcommand{\cR}{\ensuremath{\mathcal R}}
\newcommand{\cX}{\ensuremath{\mathcal X}}
\newcommand{\cZ}{\ensuremath{\mathcal Z}}
\newcommand{\mbb}[1]{\mathbb{#1}}
\newcommand{\mbf}[1]{\mathbf{#1}}
\newcommand{\mc}[1]{\mathcal{#1}}
\newcommand{\mrm}[1]{\mathrm{#1}}
\newcommand{\reals}{\ensuremath{\mathbb R}}
\newcommand{\ind}[1]{{\bf I}\left\{#1\right\}}
\def\deq{\triangleq}
\newcommand{\Rad}{\mc{Rad}}
\newcommand{\loss}{\ensuremath{\boldsymbol\ell}}
\newcommand{\norm}[1]{\left\|#1\right\|}
\newcommand{\fat}{\text{\rm fat}}
\newcommand{\pred}{\boldsymbol{\widehat{y}}}
\newcommand{\s}{\ensuremath{\mathbf s}} 
\newcommand{\x}{\ensuremath{\mathbf x}} 
\newcommand{\y}{\ensuremath{\mathbf y}} 
\newcommand{\z}{\ensuremath{\mathbf z}} 
\newcommand{\argmin}[1]{\underset{#1}{\mrm{argmin}} \ }
\newcommand{\Es}[2]{\mbb{E}_{#1}\left[ #2 \right]}
\newtheorem{theorem}{Theorem}
\newtheorem{lemma}[theorem]{Lemma}
\newtheorem{definition}{Definition}
\newtheorem{remark}{Remark}[section]
\newtheorem{corollary}[theorem]{Corollary}
\newcommand{\ip}[2]{{\left\langle{#1},{#2}\right\rangle}}
\renewcommand{\v}{\ensuremath{\mathbf{v}}}
\newcommand{\G}{\mathcal{G}}
\newcommand{\X}{\mathcal{X}}
\renewcommand{\v}{\mathbf{v}}
\renewcommand{\Rad}{\mc{R}}
\renewcommand{\X}{\mc{X}}
\renewcommand{\z}{\mbf{z}}
\newcommand{\En}{\mbb{E}}
\renewcommand{\F}{\mc{F}}
\newcommand{\multiminimax}[1]{\ensuremath{\left\llangle #1\right\rrangle}}
\newcommand{\bmu}{\ensuremath{\boldsymbol{\mu}}}
\newcommand{\Var}{{\bf Var}}
\title{\bf On Equivalence of Martingale Tail Bounds and Deterministic Regret Inequalities} 
\author{Alexander Rakhlin\\ University of Pennsylvania \and Karthik Sridharan\\ Cornell University}
\date{\today}
\begin{document}
\maketitle

\begin{abstract}
	We study an equivalence of (i) deterministic pathwise statements appearing in the online learning literature (termed \emph{regret bounds}), (ii) high-probability tail bounds for the supremum of a collection of martingales (of a specific form arising from uniform laws of large numbers for martingales), and (iii) in-expectation bounds for the supremum. By virtue of the equivalence, we prove exponential tail bounds for norms of Banach space valued martingales via deterministic regret bounds for the online mirror descent algorithm with an adaptive step size. We extend these results beyond the linear structure of the Banach space:  we define a notion of \emph{martingale type} for general classes of real-valued functions and show its equivalence (up to a logarithmic factor) to various sequential complexities of the class (in particular, the sequential Rademacher complexity and its offset version). For classes with the general martingale type 2, we exhibit a finer notion of variation that allows partial adaptation to the function indexing the martingale. Our proof technique rests on sequential symmetrization and on certifying the \emph{existence} of regret minimization strategies for certain online prediction problems.

\end{abstract}

\section{Introduction}

Let $Z_1,\ldots,Z_n$ be a martingale difference sequence taking values in a separable $(2,D)$-smooth Banach space $(\mathfrak{B},\|\cdot\|)$. A result due to Pinelis \cite{Pinelis94} asserts that for any $u>0$
\begin{align}
	\label{eq:pinelis}
	P\left( \sup_{n\geq 1} \norm{\sum_{t=1}^n Z_t}\geq \sigma u\right)\leq 2\exp\left\{-\frac{u^2}{2D^2}\right\},
\end{align}
where $\sigma$ is a constant satisfying $\sum_{t=1}^\infty \norm{Z_t}^2_\infty \leq \sigma^2$. Writing the norm $\norm{x} = \sup_{\norm{y}_*\leq 1} \inner{y,x}$ as the supremum over the dual ball, we may re-interpret \eqref{eq:pinelis} as a one-sided tail control for the supremum of a stochastic process $\left\{y\mapsto \sum_{t=1}^n \inner{y, Z_t}: \norm{y}_*\leq 1\right\}$. In this paper, we consider several extensions of \eqref{eq:pinelis}, motivated by the following questions:
\begin{enumerate}
	\setlength\itemsep{-1mm}
	\item[(a)] Can \eqref{eq:pinelis} be strengthened by replacing $\sigma$ with a ``path-dependent'' version of variation?
	\item[(b)] Does a version of \eqref{eq:pinelis} hold when we move away from the linear structure of the Banach space?
\end{enumerate}
Positive answers to these questions constitute the first contribution of our paper. The second contribution involves the actual technique. The cornerstone of our analysis is a certain equivalence of martingale inequalities and deterministic pathwise statements. The latter inequalities are studied in the field of online learning (or, sequential prediction), and are referred to as \emph{regret bounds}. We show that the existence (which can be certified via the minimax theorem) of prediction strategies that minimize regret yields predictable processes that help in answering (a) and (b). The equivalence is exploited in both directions, whereby stronger regret bounds are derived from the corresponding probabilistic bounds, and vice versa. To obtain one of the main results in the paper, we sharpen the bound by passing several times between the deterministic statements and probabilistic tail bounds. The equivalence asserts a strong connection between probabilistic inequalities for martingales and online learning algorithms.

In the remainder of this section, we present a simple example of the equivalence based on the gradient descent method, arguably the most popular convex optimization procedure. The example captures, loosely speaking, a correspondence between deterministic optimization methods and probabilistic bounds. Consider the unit Euclidean ball $\cB$ in $\reals^d$. Let $z_1,\ldots,z_n \in \cB$ and define, recursively, the Euclidean projections  
\begin{align}
	\label{eq:gradient_descent}
	\pred_{t+1} = \pred_{t+1}(z_1,\ldots,z_{t})=\text{Proj}_{\cB} \left(\pred_{t}-n^{-1/2}z_t\right)
\end{align}
for each $t=1,\ldots,n$, with the initial value $\pred_1=0$. Elementary algebra\footnote{See the two-line proof in the Appendix, Lemma~\ref{lem:gd}.} shows that for any $f\in\cB$, the regret inequality $\sum_{t=1}^n \inner{\pred_t-f,z_t} \leq \sqrt{n}$
holds deterministically for any sequence $z_1,\ldots,z_n\in \cB$. We re-write this statement as
\begin{align}
	\label{eq:l2regret_linear_norm}
	\norm{\sum_{t=1}^n z_t}-\sqrt{n} \leq \sum_{t=1}^n \inner{\pred_t,-z_t}.
\end{align}
Applying the deterministic inequality to a $\cB$-valued martingale difference sequence $-Z_1,\ldots,-Z_n$, 
\begin{align}
	\label{eq:tail_l2_regret}
	P\left(\norm{\sum_{t=1}^n Z_t}-\sqrt{n} > u\right) \leq P\left(\sum_{t=1}^n \inner{\pred_t,Z_t} > u\right) \leq \exp\left\{-\frac{u^2}{2n}\right\}.
\end{align}
The latter upper bound is an application of the Azuma-Hoeffding's inequality. Indeed, the process $(\pred_t)$ is predictable with respect to $\sigma(Z_1,\ldots,Z_t)$, and thus $(\inner{\pred_t,Z_t})$ is a $[-1,1]$-valued martingale difference sequence. It is worth emphasizing the conclusion: \emph{one-sided deviation tail bounds for a norm of a vector-valued martingale can be deduced from tail bounds for real-valued martingales with the help of a deterministic inequality}. Next, integrating the tail bound in \eqref{eq:tail_l2_regret} yields a seemingly weaker in-expectation statement
\begin{align}
	\label{eq:expected_l2_bound}
	\En\norm{\sum_{t=1}^n Z_t} \leq c\sqrt{n}
\end{align}
for an appropriate constant $c$. The twist in this uncomplicated story comes next: with the help of the minimax theorem,  \cite{RakSriTew10} established \emph{existence} of strategies $(\pred_t)$ such that 
\begin{align}
	\forall z_1,\ldots,z_n, f \in \mathcal{B},~~~~~ \sum_{t=1}^n \inner{\pred_t-f,z_t} \leq \sup\En\norm{\sum_{t=1}^n Z_t}, 
\end{align}
with the supremum taken over all martingale difference sequences with respect to a dyadic filtration. In view of \eqref{eq:expected_l2_bound}, this bound is $c\sqrt{n}$. 

What have we achieved? Let us summarize. The deterministic inequality \eqref{eq:l2regret_linear_norm}, which holds for \emph{all sequences}, implies a tail bound \eqref{eq:tail_l2_regret}. The latter, in turn, implies an in-expectation bound \eqref{eq:expected_l2_bound}, which implies \eqref{eq:l2regret_linear_norm} (with a worse constant) through a minimax argument, thus closing the loop. The equivalence---studied in depth in this paper---is informally stated below:
\paragraph{Informal:}
	The following bounds imply each other: (a) an inequality that holds for all sequences; (b) a deviation tail probability for the size of a  martingale; (c) an in-expectation bound on the size of a martingale.

The equivalence, in particular, allows us to amplify the in-expectation bounds to appropriate high-probability tail bounds. 

As already mentioned, the pathwise inequalities, such as \eqref{eq:l2regret_linear_norm}, are extensively studied in the field of online learning. In this paper, we employ some of the recently developed data-dependent (adaptive) regret inequalities to prove tail bounds for martingales. In turn, in view of the above equivalence, martingale inequalities shall give rise to novel deterministic regret bounds. 

While writing the paper, we learned of the \emph{trajectorial approach}, extensively studied in recent years. In particular, it has been shown that Doob's maximal inequalities and  Burkholder-Davis-Gundy inequalities have deterministic counterparts \cite{acciaio2013,beiglbock2014martingale,gushchin2014pathwise,beiglboeck2015pathwise}. The online learning literature contains a trove of pathwise inequalities, and further synthesis with the trajectorial approach (and the applications in mathematical finance) appears to be a promising direction.

This paper is organized as follows. In the next section, we extend the Euclidean result to martingales with values in Banach spaces, and also improve it by replacing $\sqrt{n}$ with square root of variation. We define a notion of martingale type for general classes of functions in Section~\ref{sec:gmtype}, and exhibit a tight connection to the growth of sequential Rademacher complexity. Section~\ref{sec:dyadic} presents sequential symmetrization; here we prove that statements for the dyadic filtration automatically yield corresponding tail bounds for general discrete-time stochastic processes. In Section~\ref{sec:regret}, we introduce the machinery for obtaining regret inequalities, and show how these inequalities  allow one to amplify certain in-expectation bounds into high-probability statements (Section~\ref{sec:amplification}). The last two sections contain some of the main results:  In Section \ref{sec:uniform} we prove a high probability bound for the notion of martingale type, and present a finer analysis of adaptivity of the variation term in Section \ref{sec:per_func}.

\section{Results in Banach spaces}
\label{sec:banach}

For the case of the Euclidean (or Hilbertian) norm, it is easy to see that the $\sqrt{n}$ bound of \eqref{eq:expected_l2_bound} can be improved to a distribution-dependent quantity $\left( \sum_{t=1}^n \En\norm{Z_t}^2 \right)^{1/2}$. Given the equivalence sketched earlier, one may wonder whether this implies existence of a gradient-descent-like method with a sequence-dependent variation governing the rate of convergence of this optimization procedure. Below, we indeed present such a method for $2$-smooth Banach spaces.

Let $(\mathfrak{B}, \|\cdot\|)$ be a separable Banach space, and let $(\mathfrak{B}_*, \|\cdot\|_*)$ denote its dual. $(\mathfrak{B}, \|\cdot\|)$ is of \emph{martingale type $p$} (for $p\in[1,2])$) if there exists a constant $C$ such that
\begin{align}
	\label{eq:ptype}
	\En\norm{\sum_{t=1}^n Z_t }^p \leq C^p \sum_{t=1}^n \En\norm{Z_t}^p
\end{align}
for any $\mathfrak{B}$-valued martingale difference sequence. The best possible constant $C$ in this inequality (as well as its finiteness) is known to depend on the geometry of the Banach space. For instance, for a Hilbert space \eqref{eq:ptype} holds for $p=2$ with constant $C=1$. On the other hand, triangle inequality implies that any space has the trivial type $p=1$.

An equivalent way to define martingale type $p$ is to ask that there exist a constant $C$ such that
\begin{align}
	\label{eq:2type-b}
	\E\sup_{\norm{y}_*\leq 1}~ \sum_{t=1}^n \inner{y,Z_t} = \En\norm{\sum_{t=1}^n Z_t} \leq C \left( \sum_{t=1}^n \En\norm{Z_t}^p \right)^{1/p}.
\end{align}

We now show that the strengthening to a sequence-dependent variation holds for any $2$-smooth Banach space, as we show next. Based on the equivalence mentioned earlier, we immediately obtain tail bounds.

Assume $\|\cdot\|$ is $2$-smooth. Let $D_{\cR}:\mathfrak{B}_*\times\mathfrak{B}_*\to \reals$ be the Bregman divergence with respect to a convex function $\cR$, which is assumed to be  $1$-strongly convex on the unit ball $\cB_*$ of $\mathfrak{B}_*$. Denote $R_{\max}^2 \deq \sup_{f,g\in\cB_*} D_{\cR}(f,g)$.  We extend and improve \eqref{eq:tail_l2_regret} as follows.
\begin{theorem}
	\label{thm:banach_space}
	Let $Z_1,\ldots,Z_n$ be a $\mathfrak{B}$-valued martingale difference sequence, and let $\En_{t}$ stand for the conditional expectation given $Z_1,\ldots,Z_{t}$. For any $u>0$, it holds that
	\begin{align}
		\label{eq:banach_tail}
		P\left( \frac{\norm{ \sum_{t=1}^n  Z_t} - 2.5 R_{\max} \left(\sqrt{V_n} + 1\right) }{ \sqrt{V_n + W_n  + \left(\En\sqrt{V_n+W_n}\right)^2}} > u  \right)\leq \sqrt{2}\exp\left\{-u^2/16\right\},
	\end{align}
	where
	\begin{align}
		\label{def:Vn_W_n}
		V_n = \sum_{t=1}^n  \norm{Z_t}^2 ~~~~~\text{and}~~~~ W_n = \sum_{t=1}^n \En_{t-1}\norm{Z_t}^2.
	\end{align}
	Furthermore, the bound holds with $W_n\equiv 0$ if the martingale differences are conditionally symmetric. 
\end{theorem}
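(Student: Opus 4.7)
The plan is to combine an adaptive online mirror descent regret bound with a self-normalized exponential tail inequality for real-valued martingales. The deterministic regret step supplies the additive $2.5R_{\max}(\sqrt{V_n}+1)$ correction, while the martingale concentration step produces the self-normalizing denominator $\sqrt{V_n+W_n+(\En\sqrt{V_n+W_n})^2}$.

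First I would run adaptive online mirror descent on the dual ball $\cB_*$ with the $1$-strongly convex potential $\cR$ (the dual of the primal $2$-smooth norm), with an AdaGrad-style step size $\eta_t \propto R_{\max}/\sqrt{1+\sum_{s<t}\norm{z_s}^2}$. The standard analysis of such adaptive mirror descent gives a pathwise regret bound
\begin{align*}
\sup_{\norm{f}_*\leq 1}\sum_{t=1}^n \inner{f-\hatf_t,z_t} \leq 2.5\, R_{\max}\bigl(\sqrt{V_n}+1\bigr)
\end{align*}
holding for every sequence $z_1,\ldots,z_n\in\mathfrak{B}$. Using $\norm{\sum_t z_t}=\sup_{\norm{f}_*\leq 1}\sum_t\inner{f,z_t}$, this rearranges to the pathwise inequality
\begin{align*}
\Big\|\sum_{t=1}^n z_t\Big\| - 2.5\, R_{\max}(\sqrt{V_n}+1) \leq \sum_{t=1}^n \inner{\hatf_t,z_t}.
\end{align*}

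Next I would instantiate this with the MDS. Since $\hatf_t$ depends only on $z_1,\ldots,z_{t-1}$, after substituting $Z_1,\ldots,Z_n$ the sequence $(\hatf_t)$ becomes predictable, so $M_n \deq \sum_{t=1}^n \inner{\hatf_t,Z_t}$ is a scalar martingale with $\lvert\Delta M_t\rvert\leq \norm{Z_t}$. The theorem then reduces to showing
\begin{align*}
P\Bigl(M_n > u\sqrt{V_n+W_n+(\En\sqrt{V_n+W_n})^2}\Bigr) \leq \sqrt{2}\, e^{-u^2/16},
\end{align*}
with $W_n$ dropped in the conditionally symmetric case.

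The final step is a self-normalized exponential inequality. I would first establish the MGF bound $\En\exp\bigl(\lambda M_n - \tfrac{\lambda^2}{2}(V_n+W_n)\bigr)\leq 1$ for general bounded MDS; in the conditionally symmetric case the cleaner $\cosh(x)\leq e^{x^2/2}$ trick yields $\En\exp(\lambda M_n - \tfrac{\lambda^2}{2}V_n)\leq 1$, which is why $W_n$ drops. Then the method of mixtures, applied with a Gaussian prior on $\lambda$ whose variance is chosen of order $1/(\En\sqrt{V_n+W_n})^2$, converts the MGF bound into the desired self-normalized tail: the prior scale is exactly the deterministic floor $(\En\sqrt{V_n+W_n})^2$ appearing under the square root, and the $\sqrt{2}$ prefactor and $1/16$ come from the Gaussian mixture calculation.

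The main obstacle will be the paired $V_n+W_n$ MGF bound for non-symmetric MDS; Freedman-type inequalities naturally pair $\lambda M_n$ with $W_n$, and obtaining an extra $V_n$ in the exponent requires either a tangent-sequence (decoupling) step to replace the original MDS with a conditionally symmetric companion at the cost of the $W_n$ correction, or a careful direct bound exploiting $\lvert\Delta M_t\rvert\leq \norm{Z_t}$. Once this MGF is in hand, calibrating the Gaussian mixture to produce precisely the $(\En\sqrt{V_n+W_n})^2$ floor with the stated constants is a bookkeeping exercise.
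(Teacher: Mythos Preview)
Your proposal is correct and follows essentially the same route as the paper. The paper proves the pathwise regret bound (Lemma~\ref{lem:pred_seq_regret}) with an adaptive step size of exactly the AdaGrad flavor you describe, substitutes the MDS to reduce to the scalar martingale $A=\sum_t\inner{\hatf_t,Z_t}$, verifies the canonical assumption $\En\exp(\lambda A-\lambda^2 B^2/2)\le 1$ with $B^2=4(V_n+W_n)$ via the tangent-sequence symmetrization you anticipate, and then invokes the de la Pe\~na--Klass--Lai self-normalized tail bound (which is the method-of-mixtures result you sketch) to obtain the $\sqrt{2}\exp\{-u^2/16\}$ tail; the conditionally symmetric case drops $W_n$ for exactly the $\cosh$ reason you give.
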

In addition to extending the Euclidean result of the previous section to Banach spaces, \eqref{eq:banach_tail} offers several advantages. First, it is $n$-independent. Second, deviations are \emph{self-normalized} (that is, scaled by root-variation terms). We refer to Lemma~\ref{lem:azuma_hoeff_sup} for other forms of probabilistic bounds.

To prove the theorem, we start with a deterministic inequality from \citep[Corollary 2]{RakSri13nips}. For completeness, the proof is provided in the Appendix.
\begin{lemma}
	\label{lem:pred_seq_regret}
	Let $\F\subset\mathfrak{B}_*$ be a convex set. Define, recursively,
	\begin{align}
		\pred_{t+1} = \pred_{t+1}(z_1,\ldots,z_{t}) = \argmin{f\in\F}~ \eta_{t} \inner{f,z_t} + D_{\cR}(f,\pred_{t})
	\end{align}
	with $\pred_0=0$,  
	$\textstyle \eta_t \deq R_{\max}\min\left\{1, \left(\sqrt{\sum_{s=1}^{t} \norm{z_s}^2}+\sqrt{\sum_{s=1}^{t-1} \norm{z_s}^2}\right)^{-1}\right\}$, and 
	with $R_{\max}^2 \deq \sup_{f,g\in\F} D_{\cR}(f,g)$.
	Then for any $f\in \F$ and any $z_1,\ldots,z_n\in \mathfrak{B}$, 
	\begin{align*}
		\textstyle\sum_{t=1}^n \ip{\pred_t-f}{z_t} \le 2.5 R_{\max} \left(\sqrt{\sum_{t=1}^n \norm{z_t}^2 } + 1\right).
	\end{align*}
\end{lemma}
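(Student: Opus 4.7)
The plan is the standard mirror-descent / FTRL analysis with an adaptive step size, adapted to the dual ball of a $2$-smooth Banach space, organized as a per-round bound followed by two telescoping arguments. First, from the first-order optimality condition for $\pred_{t+1}$ combined with the three-point identity for Bregman divergences, one gets
\begin{align*}
\eta_t\inner{\pred_{t+1}-f,z_t} \le D_{\cR}(f,\pred_t) - D_{\cR}(f,\pred_{t+1}) - D_{\cR}(\pred_{t+1},\pred_t).
\end{align*}
I would then split $\inner{\pred_t-f,z_t}=\inner{\pred_t-\pred_{t+1},z_t}+\inner{\pred_{t+1}-f,z_t}$, bound the first piece via $\inner{\pred_t-\pred_{t+1},z_t}\le \norm{\pred_t-\pred_{t+1}}_*\norm{z_t} \le \tfrac{1}{2\eta_t}\norm{\pred_t-\pred_{t+1}}_*^2+\tfrac{\eta_t}{2}\norm{z_t}^2$, and absorb the quadratic term into $D_{\cR}(\pred_{t+1},\pred_t)/\eta_t$ using $1$-strong convexity of $\cR$. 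After dividing through by $\eta_t$, the per-round inequality reads
\begin{align*}
\inner{\pred_t-f,z_t} \le \tfrac{\eta_t}{2}\norm{z_t}^2 + \tfrac{1}{\eta_t}\bigl(D_{\cR}(f,\pred_t)-D_{\cR}(f,\pred_{t+1})\bigr).
\end{align*}

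Set $S_t=\sum_{s=1}^t\norm{z_s}^2$, so that $\eta_t=R_{\max}\min\{1,(\sqrt{S_t}+\sqrt{S_{t-1}})^{-1}\}$ is non-increasing in $t$. Summing the Bregman differences and applying Abel summation with the uniform bound $D_{\cR}(f,\pred_t)\le R_{\max}^2$, the sum telescopes to $R_{\max}^2/\eta_n$, which the definition of $\eta_n$ bounds by $R_{\max}(1+\sqrt{S_n}+\sqrt{S_{n-1}})\le R_{\max}(1+2\sqrt{S_n})$. For the variation sum I would use the algebraic identity $\norm{z_t}^2=(\sqrt{S_t}-\sqrt{S_{t-1}})(\sqrt{S_t}+\sqrt{S_{t-1}})$: on the regime $\sqrt{S_t}+\sqrt{S_{t-1}}\ge 1$ the product $\eta_t(\sqrt{S_t}+\sqrt{S_{t-1}})$ equals $R_{\max}$, while on the complementary regime $\eta_t=R_{\max}$ and $\sqrt{S_t}+\sqrt{S_{t-1}}\le 1$. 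Either way $\eta_t\norm{z_t}^2\le R_{\max}(\sqrt{S_t}-\sqrt{S_{t-1}})$, and telescoping gives $\sum_{t=1}^n\tfrac{\eta_t}{2}\norm{z_t}^2\le \tfrac{R_{\max}}{2}\sqrt{S_n}$.

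Adding the two pieces yields $\sum_{t=1}^n\inner{\pred_t-f,z_t}\le R_{\max}(1+2.5\sqrt{S_n})\le 2.5\,R_{\max}(\sqrt{S_n}+1)$, which is exactly the claim. The main obstacle I anticipate is not any single identity but the bookkeeping required to make the adaptive step size telescope cleanly in \emph{both} the small-variation regime (where $\eta_t$ is clipped at $R_{\max}$) and the large-variation regime: the particular form $\sqrt{S_t}+\sqrt{S_{t-1}}$ in the denominator of $\eta_t$ is precisely what makes the identity $\norm{z_t}^2=(\sqrt{S_t}-\sqrt{S_{t-1}})(\sqrt{S_t}+\sqrt{S_{t-1}})$ line up with the step size to produce a telescoping sum with the right constant. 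Everything else---monotonicity of $\eta_t$, the three-point identity, Abel summation---is routine.
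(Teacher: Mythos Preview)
Your proof is correct and follows essentially the same approach as the paper's: the three-point identity for the mirror-descent update, the split $\inner{\pred_t-f,z_t}=\inner{\pred_t-\pred_{t+1},z_t}+\inner{\pred_{t+1}-f,z_t}$, Young's inequality together with $1$-strong convexity to cancel the $D_{\cR}(\pred_{t+1},\pred_t)$ term, Abel summation with the uniform bound $D_{\cR}(f,\pred_t)\le R_{\max}^2$, and finally the telescoping identity $\eta_t\norm{z_t}^2\le R_{\max}(\sqrt{S_t}-\sqrt{S_{t-1}})$ driven by the specific form of $\eta_t$. The only cosmetic difference is that the paper first sums $\sum_t\inner{\pred_{t+1}-f,z_t}$ and then adds the correction $\sum_t\inner{\pred_t-\pred_{t+1},z_t}$, whereas you combine everything into a per-round inequality before summing; your Abel summation in fact yields the slightly sharper intermediate constant $R_{\max}^2/\eta_n$ in place of the paper's $R_{\max}^2(\eta_1^{-1}+\eta_n^{-1})$, but both land on the stated bound.
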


\begin{proof}[\textbf{Proof of Theorem~\ref{thm:banach_space}}]
We take $\F$ to be the unit ball in $\mathfrak{B}$, ensuring $\|\pred_t\|_* \leq 1$. For any martingale difference sequence $(Z_t)$ with values in $\mathfrak{B}_*$, the above lemma implies, by definition of the norm,  
\begin{align}
	\label{eq:banach_space_regret_norm}
	 \textstyle \norm{ \sum_{t=1}^n  Z_t} - 2.5 R_{\max} \left(\sqrt{V_n} + 1\right) \le \sum_{t=1}^n   \inner{\pred_t, Z_t}
\end{align}
for all sample paths. Dividing both sides by $\sqrt{V_n + W_n  + \left(\En\sqrt{V_n+W_n}\right)^2}$, we conclude that the left-hand side in \eqref{eq:banach_tail} is upper bounded by
\begin{align}
	\label{eq:tail_control_Banach_ratio}
	 P\left( \frac{\sum_{t=1}^n   \inner{\pred_t, Z_t}}{\sqrt{V_n + W_n  + \left(\En\sqrt{V_n+W_n}\right)^2}}  > u \right). 
\end{align}
To control this probability, we recall the following result  \cite[Theorem 2.7]{delaPenKlaLai07}:
\begin{theorem}[\cite{delaPenKlaLai07}]
	\label{thm:delapena_ratio}
	For a pair of random variables $A,B$, with $B>0$, such that
	\begin{align}
		\label{eq:moment_assumption_delapena}
		\E\exp\left\{\lambda A - \lambda^2 B^2/2\right\}\leq 1 ~~~~~ \forall \lambda\in\reals,
	\end{align}
	it holds that
	$$P\left(\frac{|A|}{\sqrt{B^2+(\En B)^2}} > u\right) \leq \sqrt{2}\exp\left\{-u^2/4\right\}.$$	
\end{theorem}

To apply this theorem, we verify assumption \eqref{eq:moment_assumption_delapena}:
\begin{lemma}
	\label{eq:AB-condition-delapena}
	The random variables $A = \sum_{t=1}^n \inner{\pred_t, Z_t}$ and $B^2 = 4\sum_{t=1}^n (\norm{Z_t}^2 + \En_{t-1}\norm{Z_t}^2)$ satisfy \eqref{eq:moment_assumption_delapena}.
\end{lemma}
The proof of the Lemma, as well as most of the proofs in this paper, is postponed to the Appendix. This concludes the proof of Theorem~\ref{thm:banach_space}.
\end{proof}

Let us make several remarks. First, \citep[Corollary 2]{RakSri13nips} proves a more general deterministic inequality: for any collection of functions $M_t = M_t(z_1,\ldots,z_{t-1})$, there exists a strategy $(\pred_t)$ such that
	\begin{align*}
		 \textstyle \forall z_1,\ldots,z_n \in \mathfrak{B},~~~~~~ \sum_{t=1}^n \ip{\pred_t-f}{z_t} \le 4.5 R_{\max} \left(\sqrt{\sum_{t=1}^n \norm{z_t-M_t}^2 } + 1\right).
	\end{align*}

	Second, the reader will notice that the pathwise inequality \eqref{eq:banach_space_regret_norm} does not depend on $n$ and the construction of $\pred_t$ is also oblivious to this value. A simple argument (Lemma~\ref{lem:BDG} in the Appendix) then allows us to lift the real-valued Burkholder-Davis-Gundy inequality (with the constant from \cite{burkholder2002best}) to the Banach space valued martingales:
	$$\En \max_{s=1,\ldots,n}  \norm{ \sum_{t=1}^s  Z_t}  \leq  \left(2.5 R_{\max}+\sqrt{3}\right) \En\sqrt{V_n} + 2.5 R_{\max}\ .
	$$
	Notably, the constant in the resulting BDG inequality is proportional to $R_{\max}$.

We also remark that Theorem~\ref{thm:banach_space} can be naturally extended to $p$-smooth Banach spaces $\mathfrak{B}$. This is accomplished in a straightforward manner  by extending Lemma~\ref{lem:pred_seq_regret}.

In conclusion, we were able to replace the distribution-independent $\sqrt{n}$ bound with a sequence-dependent quantity $V_n$. One may ask whether this phenomenon is general; that is, whether a sequence-dependent variation bound necessarily holds whenever the corresponding distribution-independent bound does. We prove in Theorem~\ref{thm:equivalence_type} below that this is indeed the case (up to a logarithmic factor), a result that holds for general classes of functions.

\section{Martingale Type for a General Class of Functions}\label{sec:gmtype}

We now define the analogue of a martingale type for a class $\G$ of real-valued measurable functions on some abstract measurable space $\cZ$. To this end, we assume that $(Z_1,\ldots,Z_n)$ is a discrete time process on a probability space $(\Omega, \mathcal{A}, P)$. Let $\E$ denote the expectation on this probability space, and let $\En_{t-1}$ denote the conditional  (given $Z_1,\ldots,Z_{t-1}$) expectation with respect to $Z_t$. For any $g:\cZ\to\reals$, 
\begin{align}
	\label{eq:gen_martingale_indexed_by_g}
	\sum_{t=1}^n \left( g(Z_t) - \Es{t-1}{g(Z_t)}\right)
\end{align}
is a sum of martingale differences $g(Z_t) - \Es{t-1}{g(Z_t)}$. We let $Z'_1,\ldots,Z'_n$ be a \emph{tangent sequence}; that is, $Z'_t$ and $Z_t$ are independent and identically distributed conditionally on $Z_{1},\ldots,Z_{t-1}$. Let $\En'_{t-1}$ denote the conditional (given $Z_1,\ldots,Z_{t-1}$) expectation with respect to $Z_{t}'$.

\begin{definition}
	\label{def:general_m_type}
 	A class $\G \subset \reals^\cZ$ has martingale type $p$ if there exists a  constant $C$ such that 
 \begin{align}
	 \label{eq:def_general_m_type}
	 \E{\left[\sup_{g \in \G} \sum_{t=1}^n \left(g(Z_t) - \Es{t-1}{g(Z_t)}\right)\right]} \le C\ \E{\left( \sum_{t=1}^n \E'_{t-1}\sup_{g \in \G}  \left|g(Z_t) - g(Z'_t)\right|^p \right)^{1/p}}.
 \end{align}
\end{definition}

\begin{remark}
	We conjecture that the statements below also hold for the definition of martingale type where $\E'_{t-1}\sup_{g \in \G}  \left|g(Z_t) - g(Z'_t)\right|^p$ on the right-hand side of \eqref{eq:def_general_m_type} is replaced with a smaller and more natural quantity $\sup_{g \in \G}  \left|g(Z_t) - \E'_{t-1} g(Z'_t)\right|^p$.
\end{remark}

In proving \eqref{eq:def_general_m_type}, we shall work with a dyadic filtration. Let $(\cA_t=\sigma(\epsilon_1,\ldots,\epsilon_t))_{t=1}^n$ generated by independent Rademacher (symmetric $\{\pm1\}$-valued) random variables $\epsilon_1,\ldots,\epsilon_n$. Let $\x = (\x_1,\ldots,\x_n)$ be a predictable process with respect to this filtration (that is, $\x_t$ is $\cA_{t-1}$-measurable) with values in some set $\X$. Sequential Rademacher complexity\footnote{This complexity is defined in \cite{RakSriTew14} without the absolute values; this difference is minor (and disappears if $0\in\F$).} of an abstract class $\F\subseteq\reals^\X$ on $\x$ is defined as 
\begin{align}
	\label{eq:def_seq_rad}
	\Rad_n(\F; \x) = \En \left| \sup_{f\in\F} \sum_{t=1}^n \epsilon_t f(\x_t)  \right| ~. 
\end{align}
\begin{definition}
	\label{def:growth_rate}
	Let $r\in (1,2]$. We say that sequential Rademacher complexity of $\F$ exhibits an $n^{1/r}$ growth with constant $C$ if 
\begin{align}
	\label{eq:def_r_growth}
	\forall n\geq 1,~~ \forall \x, ~~~ \Rad_n(\F; \x) \leq Cn^{1/r} \cdot \sup_{f\in\F,\epsilon\in\{\pm1\}^n,t\leq n} |f(\x_t(\epsilon))| ~.
\end{align}
\end{definition}

We will work with a particular class of functions $\F=\{f_g(z,z')=g(z)-g(z'): g\in\G\}$
defined on $\X\deq \cZ\times\cZ$. It is immediate that $\F$ exhibits $n^{1/r}$ whenever $\G$ does, and vice versa, with at most doubling of the constant $C$. 

Using a sequential symmetrization technique, it holds (see \cite{RakSriTew14}) that 
\begin{align}
	\E{\left[\sup_{g \in \G} \sum_{t=1}^n \left(g(Z_t) - \Es{t-1}{g(Z_t)}\right)\right]} \leq 2\sup_{\z} \Rad_n(\G; \z)~.
\end{align} 
Therefore, the statement ``$\G$ has martingale type $r$ whenever $\G$ exhibits an $n^{1/r}$ growth'' corresponds to the phenomenon that, loosely speaking, ``one may replace the distribution-independent $n^{1/r}$ bound with a sequence-dependent variation.''

The next theorem shows a tight connection between the complexity growth $n^{1/r}$ and martingale type.

\begin{theorem}
	\label{thm:equivalence_type}
	For any function class $\G \subseteq \reals^\cZ$, the following statements hold:
\begin{enumerate}
 \item	If for some $r\in (1,2]$ sequential Rademacher complexity exhibits $n^{1/r}$ growth, then $\G$ has martingale type $p$ for every $p <r$. 
 \item If $\G$ has martingale type $p$, then sequential complexity exhibits an $n^{1/p}$ growth.
 \end{enumerate}
\end{theorem}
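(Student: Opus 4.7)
The two directions require different techniques. Part 2 follows from a sequential desymmetrization argument, while part 1 leverages the central equivalence theme of the paper to convert a distribution-free in-expectation bound into a variance-dependent one.

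For part 2, given any $\cZ$-valued predictable process $\x$ of depth $n$, my plan is to realize the Rademacher sum $\sum_t \epsilon_t g(\x_t(\epsilon))$ as the deviation $\sum_t (g(Z_t) - \E_{t-1}g(Z_t))$ of a martingale indexed by $g$. The construction is to define a $\cZ$-valued process $(Z_t)$ so that, conditionally on the past, $Z_t$ is sampled uniformly between two tree labels at level $t$, with a fresh Rademacher bit encoding the choice; the increments $g(Z_t) - \E_{t-1}g(Z_t)$ then reproduce, up to sign and a universal constant, the symmetrized increments $\epsilon_t g(\x_t(\epsilon))$. Applying the martingale-type-$p$ inequality to this martingale and bounding the tangent-sequence variance crudely by $n \cdot \sup_{g,t,\epsilon}|g(\x_t(\epsilon))|^p$ then gives the desired $n^{1/p}$ growth of $\Rad_n(\G;\x)$.

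For part 1, my plan is a three-step route. Step A: sequential symmetrization, as in \cite{RakSriTew14}, reduces bounding the left-hand side of \eqref{eq:def_general_m_type} to controlling sequential Rademacher complexity on tangent-pair trees in $\cZ^2$. Step B: convert the distribution-free $n^{1/r}$ hypothesis into a data-dependent bound via a dyadic stratification on the magnitude of $\sup_g|g(Z_t) - g(Z'_t)|$. On the stratum of cardinality $n_k$ consisting of time-steps with increment size in $[2^{k-1}, 2^{k}]$, apply the $n_k^{1/r}$ growth with scale $2^k$. Step C: sum the stratum contributions using H\"older's inequality and the crude bound $n_k \cdot 2^{(k-1)p} \leq V_n$, where $V_n$ denotes the $p$-variation appearing on the right side of \eqref{eq:def_general_m_type}. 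The resulting geometric sum converges precisely because $p < r$, giving a constant of order $(r-p)^{-1}$ and explaining the strict inequality in the statement.

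The main obstacle lies in Step B: in the sequential setting the strata are path-dependent, so the $n^{1/r}$-growth hypothesis cannot be applied directly stratum-by-stratum. Two workarounds suggest themselves. The first is to formulate a conditional version of the growth hypothesis on sub-trees whose depths are upper bounds on the $n_k$, then stitch the sub-tree bounds together using the predictability of the stratum indicators. The second, more aligned with the paper's philosophy, is to first convert the $n^{1/r}$ in-expectation bound into a deterministic regret-type inequality via the minimax/relaxation machinery of Section~\ref{sec:regret}, then apply that pathwise inequality to the actual martingale before taking expectations; this route trades the peeling difficulty for a reliance on the existence of prediction strategies, but fits naturally with the equivalence framework developed earlier in the paper.
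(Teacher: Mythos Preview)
Your Part~2 sketch is in the right spirit and close to the paper's argument: the paper also constructs a process $(Z_t)$ that, conditionally on the past, takes one of two values with equal probability (specifically, a fixed element $\x_0$ in the image of $\x$ versus the path value $\x_t(\epsilon_{1:t-1})$), then applies the martingale-type inequality and crudely bounds the tangent variation by $n$ times the sup. One detail you gloss over is that the Rademacher average does not literally equal a martingale deviation; the paper handles the mismatch via a correction term $\sup_f|f(\x_0)|\sqrt{n}$, which is harmless since $r\le 2$.

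For Part~1 there is a real gap, and you have somewhat misdiagnosed the obstacle. The stratum indicators $\bb_t^{(k)}$ are predictable, so the growth hypothesis \emph{can} be applied stratum by stratum (this is exactly Lemma~\ref{lem:assm_from_growth} in the paper). The problem is what it gives you: the resulting bound is in terms of $\max_\epsilon |N_k(\epsilon)|$, and after summing the geometric series you obtain (this is Lemma~\ref{lem:uniform_in_eps})
\[
\En_\epsilon\Big[\sup_f \sum_t \epsilon_t f(\x_t)\Big] \le C_{r,p}\,\max_\epsilon\Big(\sum_t \sup_f |f(\x_t)|^p\Big)^{1/p}.
\]
But the martingale-type definition requires the right-hand side to be the \emph{expectation} over paths of the $p$-variation, not the worst path. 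Replacing $\max_\epsilon$ by $\En_\epsilon$ is the entire difficulty, and neither of your proposed workarounds addresses it. The paper's route from here is substantial: it first amplifies the $\max_\epsilon$ bound to a tail bound (Corollary~\ref{cor:high_prob_unif_eps}), then uses truncation plus Doob's maximal inequality to get weak $u^{-p/(p+1)}$ tail control (Lemma~\ref{lem:weak_control}), then applies a concatenation argument together with the Burkholder--Pisier reverse H\"older principle to upgrade this to a genuine $u^{-p}$ tail (Lemma~\ref{lem:pmomentbound}), and finally integrates that tail against a second truncation to obtain the expected-variation bound. The reverse H\"older step is the key idea you are missing; it is precisely what converts a weak-type estimate into the strong moment bound that martingale type demands.
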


The proof relies on the development in the next few sections, and especially on Lemma~\ref{lem:p_moment_lemma}. The technique is partly inspired by the work of Burkholder \cite{burkholder1964maximal} and Pisier \cite{Pisier75}. In particular, a key tool is the reverse H\"older principle  \cite[Prop. 8.53]{Pisier11}.

In addition to Theorem~\ref{thm:equivalence_type}, let us state informal versions of Theorems~\ref{thm:unif} and \ref{thm:perfunction} which appear, respectively, in Sections~\ref{sec:uniform} and \ref{sec:per_func}. Define the random variables
$$\Var_p = \En' \sum_{t=1}^n \sup_{g \in \G}  \left|g(Z_t) - g(Z'_t)\right|^p,~~~ \Var_p(g) = \En' \sum_{t=1}^n \left|g(Z_t) - g(Z'_t)\right|^p$$
where $\E'$ is expectation with respect to the tangent sequence, conditionally on $Z_{1:n}$. Then Theorem~\ref{thm:unif} states that with high probability controlled by $u>0$,
$$\sup_{g \in \G} \sum_{t=1}^n \left(g(Z_t) - \Es{t-1}{g(Z_t)}\right) ~\lesssim~ \log(n) \Var_r^{1/r} + u\Var_2^{1/2}$$
whenever $\G$ exhibits $n^{1/r}$ growth of sequential Rademacher complexity. Theorem~\ref{sec:per_func} addresses the case of martingale type $2$ and states that with high probability controlled by $u>0$,
$$\sup_{g \in \G} \sum_{t=1}^n \left(g(Z_t) - \Es{t-1}{g(Z_t)}\right) -  n^\frac{q}{4}(\Var_2^{1/2}(g))^{\frac{2-q}{4}} - u\Var^{1/2}_2(g) ~\lesssim~ 0$$
whenever sequential entropy (defined below) at scale $\alpha$ behaves as $\alpha^{-q}$. 

\subsection{Other complexity measures}

We see that the martingale type of $\G$ is described by the behavior of sequential Rademacher complexity. The latter behavior can, in turn, be quantified in terms of geometric quantities, such as sequential covering numbers and the sequential scale-sensitive dimension. We present the following two definitions from \cite{RakSriTew14}, both stated in terms of a predictable process $\x=(\x_1,\ldots,\x_n)$ with respect to the dyadic filtration.  It may be beneficial (at least it was for the authors of \cite{RakSriTew14}) to think of $\x$ as a complete binary tree of depth $n$, decorated by elements of $\cX$, and $\epsilon\in\{\pm1\}^n$ specifying a path in this tree.

\begin{definition}[Sequential covering number]
	\label{def:cover}
	Let $\x=(\x_1,\ldots,\x_n)$ be an $\X$-valued predictable process with respect to the dyadic filtration, and let $\F\subseteq\reals^\X$. A collection $V$ of $\reals$-valued predictable processes is called an $\alpha$-cover (with respect to $\ell_p$) of $\F$ on $\x$ if
	\begin{align}
		\forall f\in\F, ~~\forall \epsilon \in\{\pm1\}^n, ~~\exists \v\in V, ~~~~\mbox{s.t.}~~~~ \left(\frac{1}{n} \sum_{t=1}^n |f(\x_t(\epsilon))-\v_t(\epsilon)|^p\right)^{1/p} \leq \alpha ~.
	\end{align} 
	The cardinality of the smallest $\alpha$-cover is denoted by $\cN_p(\F,\alpha,\x)$ and $\cN_p(\F,\alpha,n) = \sup_{\x} \cN_p(\F,\alpha,\x)$, and both are referred to as  sequential covering numbers. Sequential entropy is defined as $\log \cN_p$.
\end{definition}

\begin{definition}[Sequential fat-shattering dimension]
	We say that $\F\subseteq \reals^\X$ shatters the predictable process $\x=(\x_1,\ldots,\x_n)$ at scale $\alpha>0$ if there exists a real-valued predictable process $\s$ such that 
	$$\forall \epsilon\in\{\pm1\}^n, ~~\exists f\in\F, ~~~~\mbox{s.t.}~~~~ \forall t\leq n,~~~~ \epsilon_t(f(\x_t(\epsilon))-\s_t(\epsilon)) \geq \alpha/2.$$
	The largest length $n$ of a shattered predictable process $\x$ is called the sequential fat-shattering dimension at scale $\alpha$ and denoted $\fat_\alpha(\F)$.
\end{definition}

The sequential covering numbers and the fat-shattering dimension are natural extensions of the classical notions, as shown in \cite{RakSriTew14}. In particular, a Dudley-type entropy integral upper bound in terms of sequential covering numbers holds for sequential Rademacher complexity. The sequential covering numbers, in turn, are upper bounded in terms of the fat-shattering dimensions, in a parallel to the way classical empirical covering numbers are controlled by the scale-sensitive version of the Vapnik-Chervonenkis dimension. We summarize the implications of these relationships in the following corollary:
\begin{corollary}
	For any function class $\F \subseteq \reals^\X$,
\begin{enumerate}
	\item If for some $q>0$ either $\forall \alpha, \log\mathcal{N}_2(\F,\alpha,n) \le C \alpha^{-q}$ or $\forall \alpha, \fat_\alpha(\F)\leq C\alpha^{-q}$, then $\F$ has martingale type $p$ for any $p < \frac{\max\{q,2\}}{\max\{q,2\}-1}$.
	\item If $\F$ has martingale type $r\in(1,2]$ then, for every $p<r$, there exists $C$ such that $\log\mathcal{N}_2(\F,\alpha,n) \le C \alpha^{-\frac{p}{p-1}}$ and $\fat_\alpha(\F)\leq C\alpha^{-\frac{p}{p-1}}$, for all $\alpha$.
\end{enumerate}
\end{corollary}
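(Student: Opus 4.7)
The plan is to route both directions through Theorem~\ref{thm:equivalence_type} by converting between the sequential Rademacher complexity, the sequential covering numbers $\mathcal{N}_2$, and the sequential fat-shattering dimension $\fat_\alpha$ via the standard tools from \cite{RakSriTew14}. The key upper-bound chain is a sequential Dudley entropy integral,
\begin{align*}
\Rad_n(\F;\x) \,\lesssim\, \inf_{\alpha>0}\left\{\, n\alpha + \sqrt{n}\int_{\alpha}^{\mathrm{diam}(\F)} \sqrt{\log\mathcal{N}_2(\F,\beta,n)}\,d\beta \right\},
\end{align*}
together with the sequential Mendelson--Vershynin estimate $\log\mathcal{N}_2(\F,\alpha,n) \lesssim \fat_{c\alpha}(\F)\cdot\mathrm{polylog}(n/\alpha)$. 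The key lower-bound ingredient is that if $\fat_\alpha(\F)\ge N$ is witnessed by a predictable process $\x$ and a predictable $\s$, then for each path $\epsilon\in\{\pm1\}^N$ we may choose $f_\epsilon\in\F$ realizing the shattering, and averaging over $\epsilon$ yields
\begin{align*}
\Rad_N(\F;\x) \,\ge\, \En\sum_{t=1}^N \epsilon_t f_\epsilon(\x_t(\epsilon)) \,\ge\, \frac{\alpha N}{2} + \En\sum_{t=1}^N \epsilon_t \s_t(\epsilon) \,=\, \frac{\alpha N}{2},
\end{align*}
where the last equality uses predictability of $\s$, so that each term $\En[\epsilon_t \s_t(\epsilon)]=0$.

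For part (1), starting from $\log\mathcal{N}_2(\F,\alpha,n)\le C\alpha^{-q}$, plug into the Dudley integral and optimize the truncation scale. For $q\le 2$ the integral converges and gives $\Rad_n\lesssim\sqrt{n}$; for $q>2$ the optimum is at $\alpha\sim n^{-1/q}$, yielding $\Rad_n\lesssim n^{(q-1)/q}$. In either case, $\F$ exhibits $n^{1/r}$ growth with $r=\max\{q,2\}/(\max\{q,2\}-1)$, and Theorem~\ref{thm:equivalence_type}(1) then produces martingale type $p$ for every $p<r$. Starting instead from $\fat_\alpha\le C\alpha^{-q}$, first invoke the Mendelson--Vershynin bound to reduce to the covering-number case (at the cost of polylog factors, which are absorbed by passing to $p<r$).

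For part (2), invoke Theorem~\ref{thm:equivalence_type}(2) to conclude that $\Rad_n(\F)\le C'n^{1/r}$. Combining with the lower bound $\Rad_N\ge \alpha N/2$ whenever $\fat_\alpha(\F)\ge N$ gives $\fat_\alpha(\F)\lesssim \alpha^{-r/(r-1)}$; passing from the exponent $r/(r-1)$ to $p/(p-1)$ for any $p<r$ provides the slack to absorb the log factors incurred when applying Mendelson--Vershynin to translate the fat-shattering bound into an $n$-uniform covering-number bound $\log\mathcal{N}_2(\F,\alpha,n)\le C\alpha^{-p/(p-1)}$. The main obstacle is precisely the careful bookkeeping of polylogarithmic factors in the conversions between the three complexity measures, in particular the $\log(n/\alpha)$ loss in Mendelson--Vershynin and the borderline case $q=2$ in Dudley's integral. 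These factors do not alter the qualitative picture but are the reason the statement uses the strict inequality $p<r$ rather than $p=r$.
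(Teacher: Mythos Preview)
Your proposal is correct and matches the paper's intended argument: the paper does not give a separate proof but presents the corollary as an immediate consequence of Theorem~\ref{thm:equivalence_type} combined with the Dudley-type entropy integral and the sequential Mendelson--Vershynin bound from \cite{RakSriTew14}, which is exactly the route you take. Your explicit lower bound $\Rad_N(\F;\x)\ge \alpha N/2$ from a shattered tree, and the optimization of the Dudley truncation to obtain the growth exponent $r=\max\{q,2\}/(\max\{q,2\}-1)$, are the expected missing steps, and your remark about absorbing the $\log(n/\alpha)$ factors via the slack $p<r$ is precisely the reason the statement is phrased with a strict inequality.
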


We have established a relation between the martingale type of a function class $\F$ and several sequential complexities of the class. However, unlike our starting point \eqref{eq:pinelis} and Theorem~\ref{thm:banach_space}, our results so far do not quantify the tail behavior for the difference between the supremum of the martingale process and the corresponding variation. A natural idea is to mimic the ``equivalence'' argument used in Section~\ref{sec:banach} to conclude the exponential tail bounds. Unfortunately, the deviation inequalities of the previous section rest on pathwise regret bounds that, in turn, rely on the linear structure of the associated Banach space, as well as on properties such as smoothness and uniform convexity. Without the linear structure, it is not clear whether the analogous pathwise statements hold. The goal of the rest of the paper is to bring forth some of the tools recently developed within the online learning literature, and to apply these pathwise regret bounds to conclude \emph{high probability tail bounds associated to martingale type}. In addition to this goal, we will seek a version of Theorem~\ref{thm:equivalence_type}(i) for bounded functions, where the $n^{1/r}$ growth of sequential Rademacher complexity implies martingale type $r$ (rather than any $p<r$), but with an additional $\log(n)$ factor. Our third goal will be to establish per-function variation bounds (similar to the notion of a weak variance \cite{boucheron2013concentration}). We show that this latter bound is a finer version of the variation term, possible for classes that are ``not too large''.

Our plan is as follows. First, we reduce the problem to one based on the dyadic filtration. After that, we shall introduce certain deterministic inequalities from the online learning literature that are already stated for the dyadic filtration.

\section{Symmetrization: dyadic filtration is enough}
\label{sec:dyadic}

The purpose of this section is to prove that statements for the dyadic filtration can be lifted to general processes via sequential symmetrization. Consider the martingale
$$M_g = \sum_{t=1}^n g(Z_t)-\En[g(Z_t)|Z_1,\ldots,Z_{t-1}]$$
indexed by $g\in\G$. If $(Z_t)$ is adapted to a dyadic filtration $\cA_{t} = \sigma(\epsilon_1,\ldots,\epsilon_t)$, each increment
$g(Z_t)-\En[g(Z_t)|Z_1,\ldots,Z_{t-1}]$ takes on the value 
$$f_g(\x_t(\epsilon_{1:t-1})) \deq \left( g(Z_t(\epsilon_{1:t-1},+1))-g(Z_t(\epsilon_{1:t-1},-1))\right)/2$$
or its negation, where $\x_t$ is a predictable process with values in $\cZ\times \cZ$ and $f_g\in \F$ defined by $(z,z')\mapsto g(z)-g(z')$. In the rest of the paper, we work directly with martingales of the form $M_f = \sum_{t=1}^n \epsilon_t f(\x_t(\epsilon))$, indexed by an abstract class $\F\subseteq \reals^\X$ and an abstract $\X$-valued predictable process $\x$.

We extend the symmetrization approach of Panchenko \cite{Pan03sym} to sequential symmetrization for the case of martingales. In contrast to the more frequently-used Gin\'e-Zinn symmetrization proof (via Chebyshev's inequality) \cite{GinZin84,VanDerVaartWe96} that allows a direct tail comparison of the symmetrized and the original processes, Panchenko's approach allows for an ``indirect'' comparison. The following immediate extension of \cite[Lemma 1]{Pan03sym} will imply that any $\exp\{-\mu(u)\}$ type tail behavior of the symmetrized process yields the same behavior for the original process.

\begin{lemma}
	\label{lem:tail_comparison}
	Suppose $\xi$ and $\nu$ are random variables and for some $\Gamma\geq 1$ and for all $u\geq 0$
	$$P(\nu\geq u)\leq \Gamma \exp\{- \mu(u)\}.$$
	Let $\mu:\reals_+ \to\reals_+$ be an increasing differentiable function with $\mu(0)=0$ and $\mu(\infty)=\infty$. Suppose for all $a\in\reals$ and $\phi(x)\deq \mu([x-a]_+)$ it holds that
	$\En\phi(\xi)\leq \En\phi(\nu).$
	Then for any $u\geq 0$,
	$$P(\xi\geq u)\leq \Gamma \exp\{- \mu(u-\mu^{-1}(1))\}.$$
	In particular, if $\mu(b)=cb$, we have 
	$P(\xi\geq u)\leq \Gamma \exp\{1- cu\}$; 
	if $\mu(b)=cb^2$, then 
	$P(\xi\geq u) \leq \Gamma \exp\{1- cu^2/4\}.$
\end{lemma}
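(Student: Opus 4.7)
The plan is to use a Panchenko-style Markov argument: for fixed $u > 0$, choose the shift $a = u - \mu^{-1}(1)$ and the nondecreasing test function $\phi(x) = \mu([x-a]_+)$, which is normalized so that $\phi(u) = \mu(\mu^{-1}(1)) = 1$. Since $\phi$ is nondecreasing, Markov's inequality together with the hypothesis $\E\phi(\xi) \leq \E\phi(\nu)$ (applied with this specific $a$) yields
\begin{align*}
P(\xi \geq u) \;\leq\; P(\phi(\xi) \geq 1) \;\leq\; \E\phi(\xi) \;\leq\; \E\phi(\nu).
\end{align*}

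Next, I would unpack $\E\phi(\nu)$ via the layer-cake formula followed by the substitution $t = \mu(r)$, giving
\begin{align*}
\E\phi(\nu) \;=\; \int_0^\infty \mu'(r)\, P(\nu > a+r)\, dr \;\leq\; \Gamma \int_0^\infty \mu'(r)\, e^{-\mu(a+r)}\, dr.
\end{align*}
Comparing the integrand with $\tfrac{d}{dr}\bigl[-e^{-\mu(a+r)}\bigr] = \mu'(a+r)\, e^{-\mu(a+r)}$ and using that $\mu'$ is nondecreasing on $\reals_+$ (the natural setting for a rate function, and immediate in both examples in the statement), the right-hand side is bounded by $\Gamma e^{-\mu(a)}$. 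Substituting back for $a$ gives the general bound $P(\xi \geq u) \leq \Gamma \exp\{-\mu(u - \mu^{-1}(1))\}$.

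The two specializations then follow by direct substitution. For $\mu(b) = cb$ one has $\mu^{-1}(1) = 1/c$ and $\mu(u - 1/c) = cu - 1$, which produces $\Gamma e^{1-cu}$. For $\mu(b) = cb^2$ one has $\mu^{-1}(1) = 1/\sqrt{c}$; whenever $u \geq 2/\sqrt{c}$ the inequality $(u - 1/\sqrt{c})^2 \geq u^2/4$ absorbs the shift at the cost of a factor of $4$ in the exponent, while for $u < 2/\sqrt{c}$ the claimed bound $\Gamma\exp\{1 - cu^2/4\}$ exceeds $\Gamma$ and is trivial.

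The main (minor) obstacle is the integral-bound step: the lemma only assumes $\mu$ is increasing and differentiable, so strictly one needs convexity of $\mu$ to guarantee $\mu'(r) \leq \mu'(a+r)$ for $r, a \geq 0$. Both of the concrete examples in the statement are convex, so this does not affect the stated corollaries; for a fully general $\mu$ an integration-by-parts workaround suffices, at the cost of a slightly worse constant.
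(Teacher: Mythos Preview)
Your proof is correct and follows essentially the same route as the paper's: same choice $a = u - \mu^{-1}(1)$, same Markov/layer-cake structure, same handling of the two specializations. You are in fact slightly more careful than the paper, which writes $\int_0^\infty \phi'(x)e^{-\mu(x)}\,dx = \int_a^\infty \mu'(x)e^{-\mu(x)}\,dx$ as an equality (the integrand is really $\mu'(x-a)$, so the same convexity issue you flag is implicitly used there too); the only thing the paper adds that you omit is an explicit one-line disposal of the case $a<0$, where the bound is trivial since $\Gamma\geq 1$.
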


As in \cite{Pan03sym}, the lemma will be used with $\xi$ and $\nu$ as functions of a single sample and the double sample, respectively. The expression for the double sample will be symmetrized in order to pass to the dyadic filtration. However, unlike \cite{Pan03sym}, we are dealing with a dependent sequence $Z_1,\ldots,Z_n$, and the meaning ascribed to the ``second sample'' $Z_1',\ldots,Z_n'$ is that of a \emph{tangent sequence}. That is, $Z_t,Z_t'$ are independent and have the same distribution conditionally on $Z_1,\ldots,Z_{t-1}$. Let $\En_{t-1}$ stand for the conditional expectation given $Z_1,\ldots,Z_{t-1}$. 

\begin{corollary}
	\label{cor:prob_symmetrization}
	Let $\tilde{B}:\G\times \cZ^{2n} \to \reals$ be a function that is symmetric with respect to the swap of the $i$-th pair $z_i,z_i'$, for any $i\in[n]$: 
\begin{align}
	\label{eq:symmetry_condition_B}
	\tilde{B}(g; z_1,z_1',\ldots,z_i,z_i',\ldots,z_n,z_n') = \tilde{B}(g; z_1,z_1',\ldots,z_i',z_i,\ldots,z_n,z_n')
\end{align}
for all $g\in\G$. Then, under the assumptions of Lemma~\ref{lem:tail_comparison} on $\mu$, a tail behavior
\begin{align*}
	\forall (\z,\z'),~~ P\left( \sup_{g\in\G} \sum_{t=1}^{n} \epsilon_t (g(\z_t)-g(\z'_t))- \tilde{B}(g; (\z_1,\z'_1),\ldots,(\z_n,\z'_n))>u \right) \leq \Gamma \exp\{- \mu(u)\}
\end{align*}
for all $u>0$ implies the tail bound
\begin{align*}
	&P\left(\sup_{g\in\G} \sum_{t=1}^n (g(Z_t)-\En_{t-1} g(Z_t)) - \En_{Z_{1:n}'} \tilde{B}(g; Z_1,Z_1',\ldots,Z_n, Z_n')>u\right) \leq \Gamma \exp\{- \mu(u-\mu^{-1}(1))\}
\end{align*}
for any sequence of random variables $Z_1,\ldots,Z_n$ and the corresponding tangent sequence $Z_1',\ldots,Z_n'$. The supremum is taken over a pair of predictable processes $\z,\z'$ with respect to the dyadic filtration. 
A direct comparison of the expected suprema also holds:
	\begin{align}
		&\En\sup_{g\in\G} \sum_{t=1}^n (g(Z_t)-\En_{t-1} g(Z_t)) - \En_{Z_{1:n}'} \tilde{B}(g; Z_1,Z_1',\ldots,Z_n, Z_n') \\
		&\hspace{2in}\leq \sup_{\z,\z'} \En \sup_{g\in\G} \sum_{t=1}^{n} \epsilon_t (g(\z_t)-g(\z'_t))- \tilde{B}(g; (\z_1,\z'_1),\ldots,(\z_n,\z'_n)). \notag
	\end{align}
\end{corollary}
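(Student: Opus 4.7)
The strategy is a sequential extension of Panchenko's symmetrization trick, combining a tangent-sequence decoupling with conditional exchangeability before invoking Lemma~\ref{lem:tail_comparison}. Since $Z_t'$ has the same conditional law as $Z_t$ given $Z_{1:t-1}$, one may rewrite
$$
\sum_{t=1}^n \big(g(Z_t) - \En_{t-1} g(Z_t)\big) = \En_{Z'_{1:n}} \sum_{t=1}^n \big(g(Z_t) - g(Z_t')\big).
$$
Let $\xi$ denote the random variable appearing in the target tail bound and set
$$
\nu := \sup_{g\in\G} \Big[\sum_{t=1}^n \big(g(Z_t) - g(Z_t')\big) - \tilde{B}(g; Z_1, Z_1', \ldots, Z_n, Z_n')\Big].
$$
Then $\xi = \sup_g \En_{Z'_{1:n}}[\,\cdot\,] \le \En_{Z'_{1:n}} \nu$ by pulling the supremum outside the expectation. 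For any convex non-decreasing $\phi$ (the case $\phi(x) = \mu([x-a]_+)$ being the relevant one in the linear and quadratic regimes covered by Lemma~\ref{lem:tail_comparison}), monotonicity plus Jensen upgrades this to $\En\phi(\xi) \le \En\phi(\nu)$.

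The core step is sequential symmetrization. From the tangent pair $(Z_{1:n}, Z_{1:n}')$ one constructs a pair of predictable processes $\z, \z'$ with respect to the dyadic filtration such that $\nu$ is equal in distribution to
$$
\nu^\sharp := \sup_{g\in\G} \Big[\sum_{t=1}^n \epsilon_t \big(g(\z_t(\epsilon)) - g(\z_t'(\epsilon))\big) - \tilde{B}\big(g; (\z_1(\epsilon), \z_1'(\epsilon)), \ldots, (\z_n(\epsilon), \z_n'(\epsilon))\big)\Big].
$$
The construction is inductive: at each level $t$ one explores both the ``kept'' branch $(Z_t, Z_t')$ and the ``swapped'' branch $(Z_t', Z_t)$, producing a binary tree of conditionally-correct sample pairs, with $\epsilon_t$ selecting which branch of the tree is traversed. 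Conditional exchangeability $(Z_t, Z_t') \stackrel{d}{=} (Z_t', Z_t) \mid Z_{1:t-1}$ makes every root-to-leaf path distributionally correct, while the per-pair symmetry hypothesis on $\tilde{B}$ ensures the penalty is invariant under the swaps, so the two supremum quantities coincide in law. The hypothesized tail bound, applied to the constructed $\z, \z'$, then gives $P(\nu > u) \le \Gamma\exp\{-\mu(u)\}$.

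Combining the moment comparison $\En\phi(\xi) \le \En\phi(\nu)$ with the tail bound on $\nu$, Lemma~\ref{lem:tail_comparison} delivers $P(\xi > u) \le \Gamma\exp\{-\mu(u - \mu^{-1}(1))\}$, which is the claimed probability bound. The expected-supremum inequality follows from the same Jensen-plus-symmetrization chain with $\phi(x)=x$, bypassing Panchenko's lemma entirely and taking the supremum over predictable $\z, \z'$ at the final step. The principal technical obstacle is the inductive tree construction in the symmetrization step: unlike in the i.i.d.\ case where the pairs $(Z_t, Z_t')$ are globally exchangeable, here only conditional exchangeability is available, so the tree must be built level-by-level while tracking how the dependent process $Z_{t+1}$ is affected by swapping earlier pairs. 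The invariance of $\tilde{B}$ under coordinate-wise pair swaps is precisely what makes this level-by-level argument preserve the joint law.
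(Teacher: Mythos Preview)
Your high-level strategy matches the paper's exactly: pass from the target $\xi$ to the tangent-pair quantity $\nu$ by Jensen plus convexity of the supremum, symmetrize $\nu$ down to the dyadic filtration, and then invoke Lemma~\ref{lem:tail_comparison}. The Jensen step and the final appeal to the lemma are fine.

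The gap is in the symmetrization. You assert that from the single realization $(Z_{1:n},Z'_{1:n})$ one can build predictable processes $\z,\z'$ so that $\nu\stackrel{d}{=}\nu^\sharp$. This equality in law does not hold for $n\ge 2$. The tangent construction conditions on $Z_{1:t-1}$ only: $(Z_t,Z'_t)$ is exchangeable given $Z_{1:t-1}$, but $(Z_{t+1},Z'_{t+1})$ is drawn from $P_{t+1}(\,\cdot\mid Z_{1:t})$, not from $P_{t+1}(\,\cdot\mid Z_{1:t-1},Z'_t)$. Hence swapping $(Z_t,Z'_t)$ changes the conditional law of every later pair. A tree that ``explores both the kept and swapped branch'' reusing the same $(Z_{t+1},Z'_{t+1})$ on both sides carries the wrong joint law on the swapped branch, so the claimed distributional identity fails. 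You flag the difficulty (``tracking how $Z_{t+1}$ is affected by swapping earlier pairs'') but the construction you describe does not resolve it; making it work would require fresh conditional samples at every branch, i.e.\ an enlarged probability space, which you do not invoke.

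The paper sidesteps the issue by running an \emph{inequality}, not a distributional equality, backward from $t=n$. Conditional exchangeability of $(Z_n,Z'_n)$ given $Z_{1:n-1}$ lets one insert $\epsilon_n$; then one upper-bounds by $\sup_{z_n,z'_n}$, which deletes $(Z_n,Z'_n)$ from the expression. Only after this deletion is the swap of $(Z_{n-1},Z'_{n-1})$ legitimate, since nothing remaining depends on $Z_{n-1}$ versus $Z'_{n-1}$ except through the exchangeable pair itself and the symmetric $\tilde B$. Iterating yields
\[
P(\nu>u)\ \le\ \sup_{\z,\z'}\ \En_\epsilon\,\ind{\sup_{g\in\G}\sum_{t=1}^n\epsilon_t\big(g(\z_t)-g(\z'_t)\big)-\tilde B>u},
\]
and the dyadic hypothesis closes the argument. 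The same backward supremum chain applied to the expectation (rather than the indicator) gives the expected-supremum comparison. The key mechanism you are missing is that the supremum, not a tree built from the given sample, is what severs the downstream dependence and permits the next swap.
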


We conclude that it is enough to prove tail bounds for a supremum 
$$\textstyle\sup_{f\in\F} \sum_{t=1}^n \epsilon_t f(\x_t) - B(f; \x_1,\ldots,\x_n)$$
of a martingale with respect to the dyadic filtration, offset by a function $B(f; \x_1,\ldots,\x_n)$. This will be achieved with the help of deterministic regret inequalities.

\section{Deterministic regret inequalities}
\label{sec:regret}

\subsection{Sequential prediction}
\label{sec:secpred}

We let $y_1,\ldots,y_n \in\{\pm1\}$ and $x_1,\ldots,x_n \in \X$ for some abstract measurable set $\X$. Let $\F$ be a class of $[-1,1]$-valued functions on $\X$. Fix a cost function $\loss:\reals\times\reals\to\reals$, convex in the first argument. For a given function $B:\F\times\X^n\to \reals$, we aim to construct $\pred_t = \pred_t(x_1,\ldots,x_t, y_1,\ldots,y_{t-1})\in [-1,1]$ such that
\begin{align}
	\label{eq:gen_regret}
	\forall~ (x_t,y_t)_{t=1}^n, ~~~ \sum_{t=1}^n \loss(\pred_t,y_t) \le \inf_{f \in \F}\left\{ \sum_{t=1}^n \loss(f(x_t),y_t) +  B(f;x_1,\ldots,x_n)\right\}.
\end{align}
We may view $\pred_t$ as a prediction of the next value $y_t$ having observed $x_t$ and all the data thus far. In this paper, we focus on the linear loss  $\loss(a,b)=-ab/2$ (equivalently, absolute loss $|a-b|=(1-ab)/2$ when $b\in\{\pm1\}$) and $\loss(a,b)=(a-b)^2$. 
We equivalently write \eqref{eq:gen_regret} for the linear cost function as
\begin{align}
	\label{eq:abs_regret}
	\sup_{f \in \F}\left\{ \sum_{t=1}^n y_tf(x_t) - 2B(f;x_1,\ldots,x_n)\right\} \leq \sum_{t=1}^n y_t \pred_t 
\end{align}
while for the square loss it becomes
\begin{align}
	\label{eq:sq_regret}
	\sup_{f \in \F}\left\{ \sum_{t=1}^n 2y_tf(x_t)-f(x_t)^2 - B(f;x_1,\ldots,x_n)\right\} \leq \sum_{t=1}^n 2y_t \pred_t -\pred_t^2.
\end{align}
Given a function $B$ and a class $\F$, there are two goals we may consider: (a) certify the existence of $(\pred_t)\deq (\pred_1,\ldots,\pred_n)$ satisfying the pathwise inequality \eqref{eq:gen_regret} for all sequences $(x_t,y_t)_{t=1}^n$; or (b) give an explicit construction of $(\pred_t)$. Both questions have been studied in the online learning literature, but the non-constructive approach will play an especially important role. Indeed, explicit constructions---such as the simple gradient descent update  \eqref{eq:gradient_descent} --- might not be available in more complex situations, yet it is the \emph{existence} of $(\pred_t)$ that yields the sought-after tail bounds.

\subsection{Existence of strategies}
To certify the existence of a strategy $(\pred_t)$, consider the following object:
\begin{align}
	\label{eq:minimax_def}
	\cA(\F,B) = \multiminimax{\sup_{x_t}\inf_{\hat{y}_t}\max_{y_t}}_{t=1}^n \left\{ \sum_{t=1}^n \loss(\hat{y}_t,y_t) - \inf_{f \in \F}\left\{ \sum_{t=1}^n \loss(f(x_t),y_t) +  B(f;x_1,\ldots,x_n)\right\} \right\}
\end{align}
where the notation $\multiminimax{\cdots}_{t=1}^n$ stands for the repeated application of the operators (the outer operators corresponding to $t=1$). The variable $x_t$ ranges over $\X$, $y_t$ is in the set $\{\pm1\}$, and $\hat{y}_t$ ranges in $[-1,1]$.
It follows that 
\begin{quote}
	$\cA(\F,B)\leq 0$ is a \emph{necessary and sufficient condition} for the existence of $(\pred_t)$ such that \eqref{eq:gen_regret} holds. 
\end{quote}
Indeed, the optimal choice for $\hat{y}_1$ is made given $x_1$; the optimal choice for $\hat{y}_2$ is made given $x_1,y_1,x_2$, and so on. This choice defines the optimal strategy $(\pred_t)$.\footnote{If the infima are not achieved, a limiting argument can be employed.} The other direction is immediate.

Suppose we can find an upper bound on $\cA(\F,B)$ and then prove that this upper bound is non-positive. This would serve as a \emph{sufficient} condition for the existence of $(\pred_t)$. Next, we present such an upper bound for the case when the cost function is linear. More general results for convex Lipschitz cost functions can be found in \cite{FosRakSri15}.

As before, let $\epsilon=(\epsilon_1,\ldots,\epsilon_n)$ be a sequence of independent Rademacher random variables. Let $\x=(\x_1,\ldots,\x_n)$ and $\y=(\y_1,\ldots,\y_n)$ be predictable processes with respect to the dyadic filtration $\sigma(\epsilon_1,\ldots,\epsilon_t)$, with values in $\X$ and $\{\pm1\}$, respectively. In other words, $\x_t=\x_t(\epsilon_1,\ldots,\epsilon_{t-1})\in \X$ and $\y_t=\y_t(\epsilon_1,\ldots,\epsilon_{t-1})\in\{\pm1\}$ for each $t=1,\ldots,n$.

\begin{lemma}
	\label{lem:sufficient_for_abs_regret}
	For the case of the linear cost function, 
	\begin{align}
		\label{eq:value_upper1}
		\cA(\F,B)\leq \sup_{\x} \En \left[ \sup_{f\in\F} \sum_{t=1}^n \frac{1}{2}\epsilon_t f(\x_t) - B(f; \x_1,\ldots,\x_n) \right].
	\end{align}
	Therefore, whenever it holds that for any predictable process $\x=(\x_1,\ldots,\x_n)$ 
	\begin{align}
		\label{eq:compare_expectations}
		\En \left[ \sup_{f\in\F} \sum_{t=1}^n \epsilon_t f(\x_t)- 2 B(f; \x_1,\ldots,\x_n) \right] \leq 0 ~, 
	\end{align}
	there exists a strategy $(\pred_t)$ with values
	\begin{align}
		\label{eq:pred_ranges}
		\textstyle |\pred_t| \leq \sup_{f\in\F}|f(x_t)| 
	\end{align}
	such that the pathwise inequality \eqref{eq:abs_regret} holds.
\end{lemma}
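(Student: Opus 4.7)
The plan is to combine a sequential application of the minimax theorem with a single-step explicit computation that eliminates $\hat{y}_t$ and converts the adversarial $y_t$ into a Rademacher variable. First, because $\loss(a,b)=-ab/2$ is linear in $b$, the loss difference telescopes to $\sum_t y_t(f(x_t)-\hat{y}_t)/2$, and $\cA(\F,B)$ becomes
\begin{align*}
\multiminimax{\sup_{x_t}\inf_{\hat{y}_t}\max_{y_t}}_{t=1}^n \sup_{f\in\F}\left\{\sum_{t=1}^n y_t(f(x_t)-\hat{y}_t)/2 - B(f;x_1,\ldots,x_n)\right\}.
\end{align*}
Since the integrand is linear in each $y_t\in\{\pm 1\}$, I replace $\max_{y_t}$ by $\sup_{p_t}\En_{y_t\sim p_t}$ where $p_t$ is a distribution on $\{\pm 1\}$.

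Second, at each stage the objective is convex in $\hat{y}_t$ (a supremum of affine functions) and linear in $p_t$, while $[-1,1]$ and the simplex on $\{\pm 1\}$ are compact and convex, so the minimax theorem gives $\inf_{\hat{y}_t}\sup_{p_t} = \sup_{p_t}\inf_{\hat{y}_t}$. I apply the swap inductively starting from the innermost stage $t=n$ outward. Isolating the $\hat{y}_t$ dependence,
\begin{align*}
\En_{y_t\sim p_t}\sup_f\{\phi(f)+y_t(f(x_t)-\hat{y}_t)/2\} = -\mu_t\hat{y}_t/2 + \En_{y_t}\sup_f\{\phi(f)+y_t f(x_t)/2\},
\end{align*}
with $\mu_t=\En y_t$. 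Minimizing over $\hat{y}_t\in[-1,1]$ contributes $-|\mu_t|/2$. Writing $A=\sup_f[\phi(f)+f(x_t)/2]$, $B=\sup_f[\phi(f)-f(x_t)/2]$, and $p=(1+\mu_t)/2$, the remaining maximization is $\sup_{p\in[0,1]}\{-|2p-1|/2+pA+(1-p)B\}$. Since $\F$ is $[-1,1]$-valued, $|A-B|\le 1$, so this piecewise-linear concave function has slope bounded by $1$ and is maximized at $p=1/2$. The value is $(A+B)/2 = \En_{\epsilon_t}\sup_f\{\phi(f)+\epsilon_t f(x_t)/2\}$, so $\hat{y}_t$ is eliminated and $y_t$ is replaced by a Rademacher $\epsilon_t$.

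Iterating from $t=n$ down to $t=1$ and rewriting the nested $\sup_{x_t}\En_{\epsilon_t}$ as a supremum over $\X$-valued predictable processes $\x$ yields \eqref{eq:value_upper1}. The ``therefore'' statement follows because $\cA(\F,B)\le 0$ is necessary and sufficient for the existence of a strategy satisfying \eqref{eq:abs_regret}: under \eqref{eq:compare_expectations}, the right-hand side of \eqref{eq:value_upper1} is non-positive. For the tighter range $|\pred_t|\le\sup_f|f(x_t)|$, I redo the entire argument with $\hat{y}_t$ constrained to $[-M_t,M_t]$ where $M_t\deq\sup_f|f(x_t)|$; the inner infimum then contributes $-|\mu_t|M_t/2$, and since $|A-B|\le M_t$, the maximum in $p$ is still attained at $p=1/2$, giving the same Rademacher bound.

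The principal obstacle is the justification of the sequential minimax swap across $n$ nested stages with an outer $\sup_{x_t}$ interleaved at each level. This either invokes an existing multi-stage minimax theorem for online prediction (as developed in \cite{RakSri13nips} and related work), or requires an inductive argument showing at each level that the value function inherits the convex-in-$\hat{y}_t$ and linear-in-$p_t$ structure needed for a single-step swap to apply conditionally on an arbitrary history.
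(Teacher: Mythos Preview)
Your proposal is correct and follows the same overall route the paper uses: a sequential minimax swap to replace $\max_{y_t}$ by $\sup_{p_t}\En_{y_t\sim p_t}$, then exchange with $\inf_{\hat y_t}$, then reduce to a Rademacher expectation over a predictable process $\x$. The paper does not give a separate proof of this lemma; the closest argument is the proof of Lemma~\ref{lem:per_tree_abs_regret}, which carries out exactly these steps (citing \cite{AbeAgrBarRak09colt,RakSriTew14jmlr} for the minimax manipulation).

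There is one genuine methodological difference worth noting. After the swap, the paper upper bounds $\inf_{\hat y_t}\{-\hat y_t\,\En y_t\}$ by the \emph{suboptimal} choice $\hat y_t=f(\x_t)$ tied to the comparator, then introduces a tangent $y'_t$ via Jensen and symmetrizes $(y_t-y'_t)$ with Rademacher signs. You instead compute the inner saddle point exactly: $\inf_{\hat y_t\in[-M_t,M_t]}\{-\mu_t\hat y_t/2\}=-M_t|\mu_t|/2$, and then show directly that the resulting piecewise-linear function of $p$ is maximized at $p=1/2$ because $|A-B|\le M_t$. This is cleaner and gives the sharp factor $1/2$ in \eqref{eq:value_upper1} without passing through a tangent sequence; the paper's symmetrization route, as written in the proof of Lemma~\ref{lem:per_tree_abs_regret}, naturally produces $\sum_t 2\epsilon_t f(\x_t)$ rather than $\sum_t \epsilon_t f(\x_t)$ before the final bookkeeping. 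Your observation that the same computation with the restricted domain $[-M_t,M_t]$ delivers \eqref{eq:pred_ranges} for free is also a nice touch. The only caveat you correctly flag---the inductive justification of the nested minimax swap with interleaved $\sup_{x_t}$---is handled in the paper simply by invoking the cited prior work, so you are on the same footing there.
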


Condition \eqref{eq:compare_expectations} in the previous lemma implies the existence of a strategy for \eqref{eq:abs_regret}. However, there might be situations when \eqref{eq:compare_expectations} can be verified for a function $B(f;\x)$ of the predictable process that does not have a corresponding representation in the sense of \eqref{eq:abs_regret}. The next lemma provides a variant of Lemma~\ref{lem:sufficient_for_abs_regret}.
\begin{lemma}
	\label{lem:per_tree_abs_regret}
	Let $\x$ be an $\X$-valued predictable process with respect to the dyadic filtration. Let the function $B$ map the predictable process $\x$ and a function $f\in\F$ to a real value, with the property 
	\begin{align}
		\label{eq:rotation_decreases}
		\sup_{\y} B(f; \x\circ\y) \leq B(f; \x)
	\end{align}
	where $\y=(\y_1,\ldots,\y_n)$ is a $\{\pm1\}$-valued predictable process, and $(\x\circ \y)_t = \x_t(\y_{2:t}(\epsilon))$. If 
	\begin{align}
		\label{eq:per_tree_condition}
		\En  \left[ \sup_{f\in\F} \sum_{t=1}^n \epsilon_t f(\x_t) - 2 B(f; \x) \right] \leq 0,
	\end{align}
	then there is a strategy $(\pred_t)$ with $\pred_t = \pred_t(y_1,\ldots,y_{t-1})$ and $|\pred_t|\leq \sup_{f\in\F} |f(\x_t)|$ such that 
	\begin{align}
		\label{eq:abs_regret_per_tree}
		\forall y_1,\ldots,y_n\in \{\pm1\},~~~~ \sup_{f \in \F}\left\{ \sum_{t=1}^n y_tf(\x_t(y_1,\ldots,y_{t-1})) - 2B(f;\x)\right\} \leq \sum_{t=1}^n y_t \pred_t. 
	\end{align}
\end{lemma}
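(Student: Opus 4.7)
The plan is to construct $(\pred_t)$ by certifying that the minimax value of an appropriate game is non-positive, using Sion's theorem together with the rotation hypothesis. Concretely, I would define
\begin{align*}
V = \multiminimax{\inf_{\pred_t\in[-1,1]}\sup_{y_t\in\{\pm1\}}}_{t=1}^n \left\{\sup_{f\in\F}\!\left[\sum_{t=1}^n y_t f(\x_t(y_{1:t-1})) - 2B(f;\x)\right] - \sum_{t=1}^n y_t\pred_t\right\}.
\end{align*}
If $V\leq 0$, then extracting the innermost minimizers by backward induction yields $\pred_t^\ast(y_{1:t-1})$ witnessing \eqref{eq:abs_regret_per_tree}. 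The range bound $|\pred_t^\ast|\leq \sup_{f\in\F}|f(\x_t(y_{1:t-1}))|$ is obtained by a standard dual argument: applying Sion at round $t$, the optimal $\pred_t^\ast$ can be written as $\E_{f\sim q_t}f(\x_t(y_{1:t-1}))$ for a mixed strategy $q_t$ over $\F$, so it lies in the convex hull of $\{f(\x_t(y_{1:t-1})):f\in\F\}$.

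To bound $V$, I would swap $\inf_{\pred_t}$ and $\sup_{y_t}$ round by round via Sion's theorem: at each round the objective is linear in $\pred_t\in[-1,1]$ and in the mixed strategy $p_t\in\Delta(\{\pm 1\})$, and both feasible sets are compact and convex. Solving the inner $\inf_{\pred_t}\E_{y_t\sim p_t}[-y_t\pred_t]=-|\mu_t|$ with $\mu_t=\E_{y_t\sim p_t}y_t$, and iterating from $t=n$ down to $t=1$, gives
\begin{align*}
V \leq \sup_{p\,\text{adaptive}}\En_{y}\!\left[\sup_{f\in\F}\!\left[\sum y_t f(\x_t(y_{1:t-1})) - 2B(f;\x)\right] - \sum_{t=1}^n |\mu_t(y_{1:t-1})|\right].
\end{align*}

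Next, I would couple $y_t$ to an independent Rademacher sequence $\epsilon_t$ via a $\{\pm 1\}$-valued predictable process $\y$ on the dyadic filtration so that the marginal of $y_t$ matches $p_t(\cdot\mid y_{1:t-1})$. Splitting $y_t$ into a Rademacher part and a drift part equal to $\mu_t$, the term $\sum y_t f(\x_t(y_{1:t-1}))$ decomposes, and the correction $-\sum|\mu_t|$ absorbs the drift contribution (using $|f|\leq 1$). After relabeling the traversed path as $(\x\circ\y)_t(\epsilon)=\x_t(\y_{2:t}(\epsilon))$, this yields
\begin{align*}
V \leq \sup_{\y}\En_\epsilon\sup_{f\in\F}\!\left[\sum_{t=1}^n \epsilon_t\, f\bigl((\x\circ\y)_t(\epsilon)\bigr) - 2B(f;\x)\right].
\end{align*}
Invoking the rotation hypothesis $B(f;\x\circ\y)\leq B(f;\x)$ together with the fact that rotating the tree can only decrease the offset sequential Rademacher complexity, this is in turn bounded by $\En_\epsilon\sup_f[\sum\epsilon_t f(\x_t(\epsilon))-2B(f;\x)]$, which is $\leq 0$ by hypothesis \eqref{eq:per_tree_condition}.

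The main technical obstacle will be the coupling step: precisely matching an adaptive distribution on $\{\pm 1\}^n$ to a $\{\pm 1\}$-valued predictable Rademacher process with the correct conditional marginals, and verifying that the $-\sum|\mu_t|$ correction exactly cancels the drift produced when $\mu_t\neq 0$. The index-shift convention in $(\x\circ\y)_t=\x_t(\y_{2:t}(\epsilon))$ must also be tracked carefully so that the paths appearing in the offset $B(f;\x\circ\y)$ agree with the realized $y$-paths in the sum; once this bookkeeping is in place, the final appeal to \eqref{eq:rotation_decreases} and \eqref{eq:per_tree_condition} is immediate.
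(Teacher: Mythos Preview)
Your overall architecture (set up the minimax value $V$, apply Sion's theorem round by round, reduce to a Rademacher expectation on a rotated tree, invoke \eqref{eq:rotation_decreases} and \eqref{eq:per_tree_condition}) matches the paper's proof. However, two of your intermediate steps are not correct as written.

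\textbf{The coupling step.} After Sion you arrive at $\sup_p \En_y\bigl[\sup_f \sum (y_t-\mu_t)f(\x_t(y_{1:t-1})) - 2B(f;\x)\bigr]$ (once the drift is absorbed). Your proposal to ``couple $y_t$ to an independent Rademacher sequence $\epsilon_t$ via a $\{\pm1\}$-valued predictable process $\y$ so that the marginal of $y_t$ matches $p_t$'' cannot work: a $\{\pm1\}$-valued \emph{predictable} $\y_t(\epsilon_{1:t-1})$ is deterministic given the past, so it can only realize point-mass marginals, never a biased $p_t$. The paper handles this step differently. Instead of solving the inner $\inf_{\hat y_t}$ optimally, it makes the \emph{suboptimal} choice $\hat y_t = f(\x_t)$ (with the same $f$ as in the outer supremum), obtaining $\sum f(\x_t(y_{1:t-1}))(y_t-\En y_t)$ directly. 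It then symmetrizes via a tangent copy $y'_t\sim p_t$: since $(y_t,y'_t)$ are exchangeable one may insert $\epsilon_t(y_t-y'_t)$, then pass to $\max_{y_t,y'_t}$, and finally use that $(y_t-y'_t)\in\{-2,0,2\}$ so that $\epsilon_t(y_t-y'_t)$ is dominated by $2\epsilon_t b_t$ with $b_t\in\{\pm1\}$ and $b_t\epsilon_t\stackrel{d}{=}\epsilon_t$. A separate $\max_{y''_t}$ decouples the path $y_{1:t-1}$ from the increments; this is what eventually becomes the predictable $\y$ in $\x\circ\y$.

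\textbf{The range bound.} Your claim that ``the optimal $\pred_t^\ast$ can be written as $\En_{f\sim q_t}f(\x_t(y_{1:t-1}))$ for a mixed strategy $q_t$ over $\F$'' is incorrect: Sion at round $t$ swaps $\inf_{\pred_t}$ with $\sup_{p_t}$ where $p_t$ ranges over distributions on $\{\pm1\}$, not over $\F$. The unconstrained optimizer of $\inf_{\pred_t\in[-1,1]}\{-\pred_t\mu_t\}$ is $\mathrm{sign}(\mu_t)$, which need not satisfy $|\pred_t|\le\sup_f|f(\x_t)|$. The paper obtains the range bound by restricting the domain of $\hat y_t$ to $[-\sup_f|f(\x_t)|,\sup_f|f(\x_t)|]$ from the outset; the suboptimal choice $\hat y_t=f(\x_t)$ used above is automatically feasible for this restricted domain.

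Finally, your last inequality---that ``rotating the tree can only decrease the offset sequential Rademacher complexity,'' i.e.\ $\En_\epsilon\sup_f[\sum\epsilon_t f((\x\circ\y)_t)-2B(f;\x)]\le\En_\epsilon\sup_f[\sum\epsilon_t f(\x_t)-2B(f;\x)]$---is not true in general (already for $n=2$ one can choose $\x$ so that the collapsed subtree has larger Rademacher average). The paper instead uses \eqref{eq:rotation_decreases} to replace $-2B(f;\x)$ by the larger $-2B(f;\x\circ\y)$ and then invokes \eqref{eq:per_tree_condition} on the rotated tree.
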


\section{Amplification and equivalence}
\label{sec:amplification}

We now describe an interesting amplification phenomenon, already presented in the Introduction for the simple Euclidean case. Whenever \eqref{eq:compare_expectations} holds, the deterministic inequality \eqref{eq:abs_regret} holds, and, therefore, we may apply it to a particular martingale difference sequence to obtain high-probability bounds. Below, we detail this amplification for both linear and square loss functions.

\subsection{Linear loss}

Take any $\X$-valued predictable process $\x=(\x_1,\ldots,\x_n)$ with respect to the dyadic filtration. The deterministic inequality \eqref{eq:abs_regret} applied to $x_t=\x_t(\epsilon_1,\ldots,\epsilon_{t-1})$ and $y_t=\epsilon_t$ becomes 
\begin{align}
	\label{eq:abs_regret_martingale}
	\sup_{f \in \F}\left\{ \sum_{t=1}^n \epsilon_t f(\x_t) - 2B(f;\x_1,\ldots,\x_n)\right\} \leq \sum_{t=1}^n \epsilon_t \pred_t 
\end{align}
for any $\epsilon$, and thus we have the comparison of tails
\begin{align}
	\label{eq:prob_tail_compare}
	P\left( \sup_{f \in \F}\left\{ \sum_{t=1}^n \epsilon_t f(\x_t) - 2B(f;\x_1,\ldots,\x_n)\right\} > u\right) \leq P\left(\sum_{t=1}^n \epsilon_t \pred_t > u\right).
\end{align}
Given the boundedness of the increments $\epsilon_t\pred_t$, the tail bounds follow immediately from the Azuma-Hoeffding's inequality or from Freedman's inequality \cite{freedman1975tail}. More precisely, we use the fact that the martingale differences are bounded by $|\pred_t|\leq \sup_{f\in\F} |f(\x_t)|$, and conclude
\begin{lemma}
	\label{lem:azuma_hoeff_sup}
	If there exists a prediction strategy $(\pred_t)$ that satisfies \eqref{eq:abs_regret} and \eqref{eq:pred_ranges}, then for any predictable process $\x$ Azuma-Hoeffding inequality implies that
	\begin{align}
		\label{eq:azuma_hoeff_sup}
	 	P\left(\sup_{f \in \F}\left\{ \sum_{t=1}^n \epsilon_t f(\x_t) - 2B(f;\x_1,\ldots,\x_n) \right\} > u \right) \le \exp\left( - \frac{u^2}{4 \max_{\epsilon} \sum_{t=1}^n\sup_{f \in \F} f(\x_t(\epsilon))^2  }\right),
	\end{align}
	Freedman's inequality implies 
	\begin{align}
		\label{eq:freedman_sup}
	 	P\left(\sup_{f \in \F}\left\{ \sum_{t=1}^n \epsilon_t f(\x_t) - 2B(f;\x_1,\ldots,\x_n) \right\} > u,~~  \sum_{t=1}^n\sup_{f \in \F} f(\x_t)^2  \leq \sigma^2 \right) \le \exp\left( - \frac{u^2}{2\sigma^2 + 2uM/3 }\right),
	\end{align}
	where $M=n\cdot \sup_{f\in\F,\epsilon\in\{\pm1\}^n,t\leq n}|f(\x_t)| $, and we also have that for any $\alpha>0$,
	\begin{align}
		\label{eq:exp_tail_sup}
		P\left(\sup_{f \in \F}\left\{ \sum_{t=1}^n \epsilon_t f(\x_t) - 2B(f;\x_1,\ldots,\x_n)\right\} - \alpha \sum_{t=1}^n\sup_{f \in \F} f(\x_t)^2  > u \right) \leq \exp\left(-2\alpha u\right).
	\end{align}
	 In view of Lemma~\ref{lem:sufficient_for_abs_regret}, a sufficient condition for these inequalities is that \eqref{eq:compare_expectations} holds for all $\x$. The same inequalities hold with $B(f; \x)$ if conditions of Lemma~\ref{lem:per_tree_abs_regret} are verified for the given $\x$.
\end{lemma}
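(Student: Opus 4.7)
The strategy is purely to chain the deterministic pathwise bound \eqref{eq:abs_regret} with standard (real-valued) concentration. First I would apply \eqref{eq:abs_regret} with the ``data'' $x_t = \x_t(\epsilon_1,\ldots,\epsilon_{t-1})$ and $y_t = \epsilon_t$, the symmetric $\pm 1$ Rademacher variables. By construction the prediction strategy is causal, so $\pred_t=\pred_t(\x_1,\ldots,\x_t,\epsilon_1,\ldots,\epsilon_{t-1})$ is predictable with respect to the dyadic filtration $(\sigma(\epsilon_1,\ldots,\epsilon_{t-1}))_t$. The inequality gives the sample-path domination
\begin{equation*}
\sup_{f\in\F}\Bigl\{\sum_{t=1}^n \epsilon_t f(\x_t) - 2B(f;\x_1,\ldots,\x_n)\Bigr\}
\ \leq\ \sum_{t=1}^n \epsilon_t \pred_t,
\end{equation*}
and therefore the tail of the left-hand side is bounded by the tail of $M_n \deq \sum_{t=1}^n \epsilon_t \pred_t$. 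The process $M_n$ is a martingale with increments $\epsilon_t\pred_t$ satisfying $|\epsilon_t\pred_t|\leq\sup_{f\in\F}|f(\x_t)|$ by \eqref{eq:pred_ranges}.

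For \eqref{eq:azuma_hoeff_sup} I would apply Azuma--Hoeffding directly to $M_n$. The per-step bound $|\epsilon_t\pred_t|\leq c_t(\epsilon)\deq\sup_{f\in\F}|f(\x_t(\epsilon))|$ is path-dependent, but one can majorize $\sum_t c_t(\epsilon)^2$ uniformly by $\max_\epsilon\sum_t \sup_{f\in\F} f(\x_t(\epsilon))^2$, producing the desired deterministic denominator (the extra constant is absorbed in the $4$).

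For \eqref{eq:freedman_sup} the observation is that for Rademacher variables the predictable quadratic variation is explicit: $\En_{t-1}(\epsilon_t\pred_t)^2=\pred_t^2\leq \sup_{f\in\F} f(\x_t)^2$. Hence on the event $\{\sum_t \sup_f f(\x_t)^2 \leq \sigma^2\}$ the sum of conditional variances is at most $\sigma^2$, and the a.s.\ bound $|\epsilon_t\pred_t|\leq M$ makes Freedman's inequality directly applicable, yielding the Bernstein-type denominator $2\sigma^2 + 2uM/3$.

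For the self-bounding form \eqref{eq:exp_tail_sup} I would use the standard ``tilted martingale'' argument: since $\epsilon_t$ is symmetric $\pm 1$, the Hoeffding moment bound gives $\En_{t-1}\exp(\lambda\epsilon_t\pred_t)\leq\exp(\lambda^2\pred_t^2/2)$. Consequently $\exp\bigl(\lambda M_t - \tfrac{\lambda^2}{2}\sum_{s\leq t}\pred_s^2\bigr)$ is a nonnegative supermartingale of initial value $1$, so Ville's (or Markov's) inequality yields
\begin{equation*}
P\Bigl(M_n - \tfrac{\lambda}{2}\sum_{t=1}^n \pred_t^2 > u\Bigr)\leq e^{-\lambda u}.
\end{equation*}
Taking $\lambda = 2\alpha$ and using $\pred_t^2 \leq \sup_{f\in\F}f(\x_t)^2$ gives \eqref{eq:exp_tail_sup}. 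Combining these three tails with the pathwise domination, and invoking Lemma~\ref{lem:sufficient_for_abs_regret} (respectively Lemma~\ref{lem:per_tree_abs_regret}) to certify the existence of $(\pred_t)$ under the stated expectation conditions, completes the argument. There is no real obstacle: the work was done in establishing the deterministic inequality; this lemma is the mechanical ``amplification'' step that converts an in-expectation/pathwise statement into high-probability tail bounds, exploiting that the Rademacher driving noise has clean sub-Gaussian moment control.
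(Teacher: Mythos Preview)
Your proposal is correct and follows essentially the same approach as the paper: both reduce via the pathwise inequality \eqref{eq:abs_regret_martingale} to tail control of the real-valued martingale $\sum_t \epsilon_t\pred_t$, then invoke Azuma--Hoeffding and Freedman using $|\pred_t|\leq\sup_{f\in\F}|f(\x_t)|$. For \eqref{eq:exp_tail_sup} the paper writes the Chernoff bound and optimizes over $\lambda$ (the choice $\lambda=2\alpha$ kills the quadratic term), while you phrase the same computation as a supermartingale/Ville argument with $\lambda=2\alpha$ fixed up front; these are identical in substance.
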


Let us emphasize the conclusion of the above lemma: \emph{the non-positivity of the expected supremum of a collection of martingales, offset by a function $2B$, implies existence of a regret-minimization strategy, which implies a high-probability tail bound.} To close the loop, we integrate out the tails, obtaining an in-expectation bound of the form \eqref{eq:compare_expectations}, but possibly with a larger $B$ function. This is a more general form of the equivalence promised in the introduction.

The next goal is to find nontrivial functions $B$ such that \eqref{eq:compare_expectations} holds. The most basic $B$ is a constant that depends on the complexity of $\F$, but not on $f$ or the data. Define the worst-case sequential Rademacher averages as
	\begin{align}
		\label{eq:wc_seq_rad}
		\Rad_n(\F) ~\deq~ \sup_\x \En\sup_{f \in \F} \sum_{t=1}^n \epsilon_t f(\x_t).
	\end{align}
	Clearly, $B = \Rad_n(\F)/2$ satisfies \eqref{eq:compare_expectations}. The following is immediate.
\begin{corollary}
	\label{cor:dev_above_seq_rad}
	For any $\F\subseteq \reals^\X$ and an $\X$-valued predictable process $\x$ with respect to the dyadic filtration, 	
	\begin{align}
		\label{eq:dev_above_seq_rad}
		P\left(\sup_{f \in \F} \sum_{t=1}^n \epsilon_t f(\x_t) > \Rad_n(\F) + u \right) \leq \exp\left( - \frac{u^2}{4 \max_{\epsilon} \sum_{t=1}^n\sup_{f \in \F} f(\x_t(\epsilon))^2  }\right).
	\end{align}
\end{corollary}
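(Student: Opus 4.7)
The plan is to specialize Lemma~\ref{lem:azuma_hoeff_sup} to the constant offset function $B \equiv \Rad_n(\F)/2$, which contains no dependence on $f$ or on the predictable process. First, I would verify the hypothesis \eqref{eq:compare_expectations} for this choice. Since $B$ does not depend on $f$, the left-hand side of \eqref{eq:compare_expectations} becomes
\begin{align*}
\En \sup_{f\in\F} \sum_{t=1}^n \epsilon_t f(\x_t) - \Rad_n(\F),
\end{align*}
and by the very definition \eqref{eq:wc_seq_rad} of $\Rad_n(\F)$ as the supremum of $\En \sup_f \sum_t \epsilon_t f(\x_t)$ over all $\X$-valued predictable processes $\x$, this expression is non-positive. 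Hence \eqref{eq:compare_expectations} holds uniformly in $\x$.

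Second, I would invoke Lemma~\ref{lem:sufficient_for_abs_regret} to obtain a prediction strategy $(\pred_t)$ satisfying the pathwise regret inequality \eqref{eq:abs_regret} with this $B$, together with the boundedness property $|\pred_t| \leq \sup_{f\in\F} |f(\x_t)|$ from \eqref{eq:pred_ranges}. At this point every ingredient required by Lemma~\ref{lem:azuma_hoeff_sup} is in place.

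Finally, plugging $2B = \Rad_n(\F)$ into the Azuma--Hoeffding form \eqref{eq:azuma_hoeff_sup} of Lemma~\ref{lem:azuma_hoeff_sup} gives
\begin{align*}
P\!\left(\sup_{f\in\F}\sum_{t=1}^n \epsilon_t f(\x_t) - \Rad_n(\F) > u \right) \le \exp\!\left(-\frac{u^2}{4\max_\epsilon \sum_{t=1}^n \sup_{f\in\F} f(\x_t(\epsilon))^2}\right),
\end{align*}
which is exactly the claimed inequality once one rearranges the $\Rad_n(\F)$ term to the right-hand side of the event. Since the corollary is a direct specialization of the machinery already built, I do not anticipate any genuine obstacle; the only conceptual point is recognizing that a constant $B$ is admissible and that the hypothesis collapses to the defining inequality of $\Rad_n(\F)$.
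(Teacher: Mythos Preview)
Your proposal is correct and matches the paper's own argument essentially verbatim: the paper simply notes that $B=\Rad_n(\F)/2$ satisfies \eqref{eq:compare_expectations} by the definition \eqref{eq:wc_seq_rad} and then declares the corollary immediate from the Azuma--Hoeffding form \eqref{eq:azuma_hoeff_sup} of Lemma~\ref{lem:azuma_hoeff_sup}. Your extra sentence unpacking the passage through Lemma~\ref{lem:sufficient_for_abs_regret} is already folded into the last line of Lemma~\ref{lem:azuma_hoeff_sup}, so the two arguments coincide.
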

Superficially, \eqref{eq:dev_above_seq_rad} looks like a one-sided version of the concentration bound for classical (i.i.d.) Rademacher averages \cite{boucheron2013concentration}. However, sequential Rademacher averages are not Lipschitz with respect to a flip of a sign, as the whole remaining path may change after a flip.

\subsection{Square loss}

As for the case of the linear loss function, take any $\X$-valued predictable process $\x=(\x_1,\ldots,\x_n)$ with respect to the dyadic filtration. Fix $\alpha>0$. The deterministic inequality \eqref{eq:sq_regret} for $x_t=\x_t(\epsilon_1,\ldots,\epsilon_{t-1})$ and $y_t=\frac{1}{\alpha}\epsilon_t$ becomes 
\begin{align}
	\label{eq:sq_subst_process}
	\sup_{f \in \F}\left\{  \sum_{t=1}^n \left( \frac{2}{\alpha} \epsilon_t f(\x_t) - f^2(\x_t) \right)  -  B(f;\x_1,\ldots,\x_n)\right\}  \le \sum_{t=1}^n \frac{2}{\alpha} \epsilon_t \pred_t - \pred_t^2 \ .
\end{align}
As in the proof of \eqref{eq:exp_tail_sup}, we obtain a tail comparison
\begin{align*}
P\left(\sup_{f \in \F}\left\{  \sum_{t=1}^n\left(  \frac{2}{\alpha} \epsilon_t f(\x_t) - f^2(\x_t)\right) -  B(f;\x_1,\ldots,\x_n)\right\} > \frac{u}{\alpha}  \right) \le P\left(\sum_{t=1}^n \left( \frac{2}{\alpha} \epsilon_t \pred_t  - \pred_t^2 \right) > \frac{u}{\alpha} \right) \le \exp\left\{ - \frac{\alpha u}{2}\right\} \ .
\end{align*}

Once again, the most basic choice for $B$ is the constant that depends on the complexity of the class. We recall the following result from \cite{RakSri14nonparam}.
\begin{lemma}[\cite{RakSri14nonparam}] 
 Let $\kappa>0$. For any class $\F\subseteq\reals^\X$, there exists a prediction strategy $(\pred_t)$ with values in $[-\kappa,\kappa]$ such that
$$
\forall (x_1,y_1),\ldots, (x_n,y_n) \in \X\times[-\kappa,\kappa], ~~~~~ \sum_{t=1}^n (\pred_t - y_t)^2 - \inf_{f \in \F} \sum_{t=1}^n (f(x_t) - y_t)^2 \le \Rad_n^{\mrm{off}}(\F, \kappa, 1),
$$
where, analogously to \eqref{eq:wc_seq_rad}, we define \emph{offset Rademacher complexity} 
\begin{align}
		\label{eq:off_seq_rad}
		\Rad_n^{\mrm{off}}(\F,c_1,c_2) \deq \sup_{\x,\bmu} \En\sup_{f \in \F} \left\{ \sum_{t=1}^n 4 c_1 \epsilon_t (f(\x_t)-\bmu_t) - c_2 (f(\x_t)-\bmu_t)^2 \right\} .
	\end{align}
Here, the supremum is taken over $\X$-valued predictable processes $\x=(\x_1,\ldots,\x_n)$ and $[-\kappa,\kappa]$-valued predictable processes $\bmu$, both with respect to the dyadic filtration. 
\end{lemma}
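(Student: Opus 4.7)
The plan is to invoke the minimax-value framework from Section~\ref{sec:regret}. Take $B := \Rad_n^{\mrm{off}}(\F, \kappa, 1)$ as a constant (independent of $f$ and of the data). By the discussion surrounding \eqref{eq:minimax_def} applied to the square loss, the existence of a strategy $(\pred_t)$ with values in $[-\kappa,\kappa]$ attaining the claimed pathwise bound is equivalent to non-positivity of the associated minimax value $\cA(\F, B) \leq 0$. The task reduces to unwinding the per-round sup--inf--sup structure and verifying that the resulting expression does not exceed $B$.

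Next I would swap the operations $\inf_{\pred_t \in [-\kappa,\kappa]} \sup_{y_t \in [-\kappa,\kappa]}$ at each round using Sion's minimax theorem: the cost $(\pred_t - y_t)^2$ is continuous, convex in $\pred_t$, and $y_t$ lives in a compact interval, so $\sup_{y_t}$ may be replaced by $\sup_{p_t} \En_{y_t \sim p_t}$ for distributions $p_t$ on $[-\kappa,\kappa]$. Writing the regret as $\sup_{f \in \F} \sum_t [(\pred_t - y_t)^2 - (f(x_t) - y_t)^2]$ and expanding the squares gives the identity
\begin{align*}
\En_{p_t}[(\pred_t - y_t)^2 - (f(x_t) - y_t)^2] = (\pred_t - \En_{p_t} y_t)^2 - (f(x_t) - \En_{p_t} y_t)^2,
\end{align*}
so the inner infimum is attained at $\pred_t^\star = \En_{p_t} y_t$, reducing each round's contribution to $-(f(x_t) - \pred_t^\star)^2$.

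The third step is sequential symmetrization. Introduce a tangent variable $y_t'$ with $y_t' \sim p_t$ independently of $y_t$, and replace the deterministic quantity $\pred_t^\star = \En y_t'$ by its random version, at the cost of Jensen's inequality. Using conditional symmetry of $y_t - y_t'$, one may insert an independent Rademacher sign $\epsilon_t$. The difference of squares $(f(x_t) - y_t')^2 - (f(x_t) - y_t)^2$ factors as $(y_t - y_t')(2f(x_t) - y_t - y_t')$; with $|y_t - y_t'| \leq 2\kappa$ this produces a linear-in-$f$ term of magnitude at most $4\kappa|\epsilon_t|$ and a residual quadratic term that, upon choosing $\mu_t$ to be the midpoint of the tangent pair (a predictable process with respect to the dyadic filtration after symmetrization), collapses to exactly $-(f(\x_t) - \mu_t)^2$, i.e. $c_2 = 1$. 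Pulling $\sup_{f \in \F}$ inside the expectation and maximizing over the predictable processes $\x$ and $\bmu$ recovers precisely $\Rad_n^{\mrm{off}}(\F, \kappa, 1) = B$, so $\cA(\F, B) \leq 0$, certifying the required strategy.

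The main obstacle I anticipate is the algebraic bookkeeping in the symmetrization step: one must select the centering $\mu_t$ so that the quadratic offset surviving from the cross terms is exactly $(f(\x_t) - \mu_t)^2$ with coefficient $1$ rather than some smaller constant, and make sure no stray cross-terms obstruct the telescoping across rounds. A secondary technical worry is the repeated application of Sion's theorem in the presence of $\sup_{f \in \F}$ (where $\F$ need not be compact); this is typically resolved by a pointwise envelope argument or by relaxing to any $f \in \F$ and taking a $\limsup$, and the $[-\kappa,\kappa]$-boundedness ensures the value function is well-defined throughout.
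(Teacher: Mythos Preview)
The paper does not prove this lemma; it is quoted from \cite{RakSri14nonparam}. Your overall plan---minimax value, Sion swap, optimal $\pred_t=\mu_t\deq\En_{p_t}y_t$, then sequential symmetrization---is indeed the route taken in that reference, and the anticipated obstacles are the right ones. However, the execution of your third step contains a genuine gap.

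The problem is the origin of the quadratic offset $-(f(\x_t)-\bmu_t)^2$. After the minimax swap the supremum over $f$ sits \emph{inside} $\En_{y_t}$, so your displayed identity (valid only for fixed $f$) does not by itself reduce the round-$t$ contribution to $-(f(x_t)-\mu_t)^2$. More seriously, the factorization you invoke, $(f(x_t)-y_t')^2-(f(x_t)-y_t)^2=(y_t-y_t')(2f(x_t)-y_t-y_t')$, is \emph{purely linear} in $f$; setting $\mu_t=(y_t+y_t')/2$ turns it into $2(y_t-y_t')(f(x_t)-\mu_t)$, still linear. There is no ``residual quadratic term'' here that could collapse to $-(f(\x_t)-\mu_t)^2$, so as written your argument never produces the $c_2=1$ offset.

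The fix is simpler than your plan. Having chosen $\pred_t=\mu_t=\En_{p_t}y_t$, use the \emph{pointwise} identity
\[
(\mu_t-y_t)^2-(f(x_t)-y_t)^2 \;=\; 2\,(f(x_t)-\mu_t)(y_t-\mu_t)\;-\;(f(x_t)-\mu_t)^2,
\]
valid for every realization of $y_t$ and every $f$. The quadratic offset appears immediately, with the correct coefficient, \emph{before} any symmetrization; the centering $\bmu_t$ is the conditional mean $\mu_t$, not the midpoint of a tangent pair. One then symmetrizes only the linear part: since $y_t-\mu_t$ is mean-zero, introduce a tangent $y_t'$, insert $\epsilon_t$, and use convexity of $c\mapsto\sup_f\{\cdots+c\,(f(\x_t)-\mu_t)-(f(\x_t)-\mu_t)^2\}$ together with $|2(y_t-y_t')|\le 4\kappa$ to replace $2\epsilon_t(y_t-y_t')$ by $4\kappa\,\epsilon_t$. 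This yields exactly $4\kappa\,\epsilon_t(f(\x_t)-\bmu_t)-(f(\x_t)-\bmu_t)^2$ and hence $\cA(\F,B)\le \Rad_n^{\mrm{off}}(\F,\kappa,1)-B=0$.
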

We conclude that \eqref{eq:sq_subst_process} is satisfied with the data-independent constant  $B=\Rad_n^{\mrm{off}}(\F, 1/\alpha,1)$. Hence, the following analogue of Corollary~\ref{cor:dev_above_seq_rad} holds:
\begin{corollary}
	\label{cor:dev_above_off_rad}
	Let $\F\subseteq[-1,1]^\X$. For any $\X$-valued predictable process $\x$ with respect to the dyadic filtration and for any $\alpha>0$, it holds that
	\label{cor:dev_above_offset_rad}
	$$P\Bigg(\sup_{f \in \F}\left\{  \sum_{t=1}^n\left(  2 \epsilon_t f(\x_t) - \alpha f^2(\x_t)\right) \right\} - \Rad_n^{\mrm{off}}(\F, 1, \alpha) >   u \Bigg)   \leq \exp\left\{-\frac{\alpha u}{2}\right\} ~.$$ 
\end{corollary}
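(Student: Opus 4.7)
The plan is to combine the existence of a low-regret square-loss prediction strategy with an exponential Markov bound applied to a one-dimensional martingale. First I would invoke the square-loss regret lemma from \cite{RakSri14nonparam} with $\kappa = 1/\alpha$, producing a predictable strategy $(\pred_t)$ with $|\pred_t| \leq 1/\alpha$ whose square-loss regret is at most $\Rad_n^{\mrm{off}}(\F, 1/\alpha, 1)$. Substituting $x_t = \x_t(\epsilon_{1:t-1})$ and $y_t = \epsilon_t/\alpha$ into that regret inequality yields precisely \eqref{eq:sq_subst_process} with $B = \Rad_n^{\mrm{off}}(\F, 1/\alpha, 1)$, as observed in the paragraph preceding the corollary statement.

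To bring this into the stated form, I would multiply through by $\alpha$ and apply the one-line scaling identity $\alpha \cdot \Rad_n^{\mrm{off}}(\F, 1/\alpha, 1) = \Rad_n^{\mrm{off}}(\F, 1, \alpha)$, which follows by redistributing $\alpha$ among the coefficients $4c_1,c_2$ in definition \eqref{eq:off_seq_rad} (with the obvious reparametrization of $\bmu$). The outcome is the deterministic pathwise bound
$$\sup_{f\in\F}\sum_{t=1}^n \bigl(2\epsilon_t f(\x_t) - \alpha f^2(\x_t)\bigr) - \Rad_n^{\mrm{off}}(\F, 1, \alpha) \;\leq\; \sum_{t=1}^n \bigl(2\epsilon_t \pred_t - \alpha \pred_t^2\bigr).$$

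For the remaining exponential tail bound on the right-hand side, I would mimic the MGF argument used to prove \eqref{eq:exp_tail_sup}. Since $(\pred_t)$ is predictable with respect to the dyadic filtration and $\epsilon_t$ is a symmetric $\pm 1$ variable, the bound $\cosh(x)\leq e^{x^2/2}$ gives
$$\En_{t-1}\exp\bigl(\lambda(2\epsilon_t\pred_t - \alpha \pred_t^2)\bigr) \;=\; \cosh(2\lambda \pred_t)\exp(-\lambda\alpha \pred_t^2) \;\leq\; \exp\bigl((2\lambda^2 - \lambda\alpha)\pred_t^2\bigr),$$
which equals one at the sweet-spot choice $\lambda = \alpha/2$. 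Therefore $\exp\bigl(\tfrac{\alpha}{2}\sum_{t=1}^k(2\epsilon_t\pred_t - \alpha \pred_t^2)\bigr)$ is a nonnegative supermartingale with mean at most $1$, and Markov's inequality yields $P\bigl(\sum_t(2\epsilon_t\pred_t - \alpha\pred_t^2) > u\bigr) \leq e^{-\alpha u/2}$. Combining with the pathwise inequality above proves the corollary; the proof poses no serious obstacle beyond the bookkeeping of the $\alpha$-scaling of the offset Rademacher complexity and the verification that $\lambda=\alpha/2$ zeroes out the exponent.
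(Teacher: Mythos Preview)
Your proposal is correct and follows essentially the same route as the paper: invoke the square-loss regret lemma with $\kappa=1/\alpha$, specialize to $y_t=\epsilon_t/\alpha$ to obtain the pathwise inequality \eqref{eq:sq_subst_process} with $B=\Rad_n^{\mrm{off}}(\F,1/\alpha,1)$, and then bound the right-hand side via the exponential Markov argument used for \eqref{eq:exp_tail_sup} (the $\cosh(x)\le e^{x^2/2}$ computation with $\lambda=\alpha/2$). The only cosmetic difference is that you multiply through by $\alpha$ before the tail bound, while the paper carries the $1/\alpha$ factors and the threshold $u/\alpha$; both give $\exp\{-\alpha u/2\}$. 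One small remark: the ``reparametrization of $\bmu$'' you mention is not actually needed for the scaling identity---multiplying the defining expression in \eqref{eq:off_seq_rad} by $\alpha$ sends $(c_1,c_2)=(1/\alpha,1)$ to $(1,\alpha)$ without touching $\bmu$; the paper treats the range of $\bmu$ loosely here as well.
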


To summarize, in Section~\ref{sec:regret} we presented the machinery of regret inequalities, as well as sufficient conditions for existence of strategies. In the present section we used the pathwise statements, along with real-valued deviation  inequalities, to conclude tail bounds, which, in turn, certify existence of regret-minimization strategies. In the next two sections we put these techniques to use.

\section{Uniform variation and tail bounds for general martingale type}
\label{sec:uniform}

\newcommand{\bb}{\boldsymbol{b}}

We now make an extensive use of the amplification technique to prove in-probability versions of the ``martingale type'' definition. We start by working with dyadic martingales of the form $f\mapsto \sum_{t=1}^n \epsilon_t f(\x_t)$ where $\x=(\x_1,\ldots,\x_n)$ is a predictable process (with respect to the dyadic filtration) with values in $\cX$. Once the results for these objects are established, we conclude the corresponding statements for general processes of the form \eqref{eq:gen_martingale_indexed_by_g} via the sequential symmetrization technique summarized in Corollary~\ref{cor:prob_symmetrization}.

As in Section~\ref{sec:gmtype}, we assume a growth condition $n^{1/r}$ on sequential Rademacher complexity. 

\begin{lemma}
	\label{lem:p_moment_lemma}
  Let $\F\subseteq \reals^\X$ and $r\in(1,2]$. Under the growth assumption \eqref{eq:def_r_growth}, for any $p < r$ there exists $K_{r,p} < \infty$ such that
\begin{align}
\En \left|\sup_{f \in \F} \sum_{t=1}^n \epsilon_t f(\x_t)\right|  \le K_{r,p}\ \Es{}{\left(\sum_{t=1}^n \sup_{f \in \F}  \left|f(\x_t)\right|^p\right)^{1/p}} ~. 
\end{align}
Further, if $\F\subseteq [-1,1]^\X$ and \eqref{eq:def_r_growth} holds with constant $D/2$, then
	\begin{align}
		\En \left| \sup_{f \in \F} \sum_{t=1}^n \epsilon_t f(\x_t) \right| & \le 32 D\ \log n\   \En \left[ \left(\sum_{t=1}^{n}\sup_{f \in \F} |f(\x_t)|^r \right)^{1/r}\right]   + \phi_n 
	\end{align}
	where $\phi_n \deq \frac{64 D \sqrt{n} \log n}{n^{D^2 \log n}}$ is a negligible term.
\end{lemma}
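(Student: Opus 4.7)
The plan is a stratification of the tree by the magnitude of $b_t(\epsilon) \deq \sup_{f\in\F}|f(\x_t(\epsilon))|$, combined with a Hölder combination across strata. Let $V_* = \sup_{f,t,\epsilon}|f(\x_t(\epsilon))|$, define dyadic scales $a_k = V_*\cdot 2^{-k}$ for $k\geq 0$, and the path-dependent layers $T_k(\epsilon) = \{t\in[n]:\ a_{k+1}<b_t(\epsilon)\leq a_k\}$. The indicator $h_k(x) = \mathbb{I}\{a_{k+1}<\sup_f|f(x)|\leq a_k\}$ is a function of the node alone, so the ``masked'' process $\x^{(k)}$, obtained by replacing each node $x$ with $h_k(x)=0$ by a null node (where all $f\in\F$ vanish, after trivially extending the class), is predictable with respect to the dyadic filtration. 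Invoking the $n^{1/r}$ growth assumption on $\x^{(k)}$, together with a sub-tree extraction argument exploiting that null nodes carry no Rademacher energy, yields the per-layer bound
\begin{align*}
\En \sup_{f\in\F}\sum_{t=1}^n \epsilon_t f(\x_t^{(k)}) \;\leq\; C\, a_k\, \En\,|T_k(\epsilon)|^{1/r}.
\end{align*}

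Pathwise, $G(\epsilon) \le \sum_k \sup_f\bigl|\sum_{t\in T_k(\epsilon)}\epsilon_t f(\x_t)\bigr|$, so $\En G \leq C\sum_k a_k\En|T_k|^{1/r}$. Applying Hölder's inequality in the $k$-index with conjugate exponents $(r,r')$, $1/r+1/r'=1$,
\begin{align*}
\sum_k a_k|T_k|^{1/r} = \sum_k (a_k^p|T_k|)^{1/r}\cdot a_k^{1-p/r} \;\leq\; \Bigl(\sum_k a_k^p|T_k|\Bigr)^{\!1/r}\Bigl(\sum_k a_k^{(1-p/r)r'}\Bigr)^{\!1/r'}.
\end{align*}
The geometric series converges iff $p<r$, and since only layers with $a_k \gtrsim \max_t b_t(\epsilon)$ are nonempty on path $\epsilon$, the second factor is a constant (in $r,p$) times $(\max_t b_t(\epsilon))^{1-p/r}$. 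Combining with the pathwise estimates $\sum_k a_k^p|T_k| \le 2^p\sum_t b_t^p$ and $\max_t b_t(\epsilon) \le (\sum_t b_t(\epsilon)^p)^{1/p}$, the two Hölder factors multiply to $K_{r,p}(\sum_t b_t^p)^{1/r + (1-p/r)/p} = K_{r,p}(\sum_t b_t^p)^{1/p}$, and taking expectation yields the first claim.

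For the sharper $\log n$ bound at the endpoint $p=r$, I would truncate the stratification after $K = O((\log n)^2)$ scales, chosen so that $n\cdot a_K \leq \phi_n$; the contribution of nodes below scale $a_K$ is bounded deterministically and absorbed into $\phi_n$. Since the geometric series now diverges, the Hölder step is modified: pair $(a_k^r|T_k|)^{1/r}$ against the constant sequence $1_{k\leq K}$ to obtain a factor $K^{1/r'}\leq \log n$, and track the constant $D$ through the hypothesis $C=D/2$, yielding the advertised $32 D\log n \cdot \En(\sum_t b_t^r)^{1/r} + \phi_n$.

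The central obstacle is the per-layer bound: applying the growth assumption naively to $\x^{(k)}$ produces only $Cn^{1/r}a_k$, which summed over $k$ merely reproduces the original data-independent estimate $2Cn^{1/r}V_*$ and gains nothing. The crucial refinement to $\En|T_k(\epsilon)|^{1/r}$ is what drives the lemma, and requires a careful sub-tree extraction argument---either a recursive peeling of the tree one depth at a time, contributing complexity only at nodes in layer $k$, or a conditional argument on the effective depth traced by the mask along each sample path.
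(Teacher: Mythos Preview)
Your stratification by the envelope $b_t(\epsilon)=\sup_f|f(\x_t(\epsilon))|$ is exactly the paper's first step (Lemma~\ref{lem:uniform_in_eps}). The genuine gap is your per-layer bound
\[
\En\sup_{f\in\F}\sum_{t=1}^n\epsilon_t f(\x_t^{(k)}) \;\le\; C\,a_k\,\En|T_k(\epsilon)|^{1/r}.
\]
The sub-tree extraction you invoke (essentially Lemma~\ref{lem:assm_from_growth} in the paper, via stopping times $\tau_i=\min\{s:\sum_{j\le s}\bb_j\ge i\}$) only yields the weaker $(\max_\epsilon|T_k|)^{1/r}$: after time-changing, you land on a predictable process of deterministic length $N=\max_\epsilon|T_k|$, and the growth assumption is a worst-case statement about fixed-length trees, so $N^{1/r}$ is what comes out. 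There is no obvious mechanism to replace this by the pathwise expectation; this is precisely why the paper does \emph{not} prove your per-layer bound. With only the $\max_\epsilon$ version, your H\"older combination reproduces Lemma~\ref{lem:uniform_in_eps}, i.e.\ a bound by $\max_\epsilon\bigl(\sum_t b_t^p\bigr)^{1/p}$, which is strictly weaker than the claimed $\En\bigl(\sum_t b_t^p\bigr)^{1/p}$.

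The paper's passage from $\max_\epsilon$ to $\En$ is substantially different from anything in your sketch. For Part~1 it runs: (i) amplify Lemma~\ref{lem:uniform_in_eps} via the regret machinery to a tail bound (Corollary~\ref{cor:high_prob_unif_eps}); (ii) combine this with a predictable truncation $\bb_t=\ind{\sum_{s\le t}b_s^p\le a^p}$ and Doob's maximal inequality to get a weak-type estimate (Lemma~\ref{lem:weak_control}); (iii) concatenate $N$ independent copies of the tree and apply Burkholder's reverse H\"older principle to upgrade weak to strong type (Lemma~\ref{lem:pmomentbound}); (iv) a final truncation-and-integration. For Part~2 the paper does not truncate the stratification as you propose; instead it integrates the amplified tail bound directly and then sets $1/p=1/r+1/\log n$, which is where the $\log n$ and the negligible $\phi_n$ arise. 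Your approach to Part~2 would be viable only if the per-layer expectation bound held, so it inherits the same gap.
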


The second part of the proof of Lemma~\ref{lem:p_moment_lemma} uses the amplification idea of the previous section. 

Using Lemma~\ref{lem:sufficient_for_abs_regret}, we can now conclude existence of prediction strategies whose regret is controlled by sequence-dependent variance. This greatly extends the scope of available variance-type bounds in the online learning literature where results in this direction have been obtained for either finite or linear classes. 
\begin{corollary}
	\label{cor:final_results}
	Let $\F\subseteq [-1,1]^\X$ and $r\in(1,2]$. If \eqref{eq:def_r_growth} holds with constant $D/2$, then there exists a prediction strategy $(\pred_t)$ such that
$$
\sum_{t=1}^n (- \pred_t \cdot y_t) - \inf_{f \in \F} \sum_{t=1}^n (-f(x_t) \cdot y_t) \le 32 \cdot D \log_2(n)  \left(\sum_{t=1}^{n}\sup_{f \in \F} |f(x_t)|^r \right)^{1/r}  + \phi_n
$$
for any sequence of $(x_t,y_t)_{t=1}^n$ (equivalently, \eqref{eq:abs_regret} holds). 
\end{corollary}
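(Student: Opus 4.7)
\textbf{Proof plan for Corollary~\ref{cor:final_results}.} The statement is equivalent to the pathwise regret inequality \eqref{eq:abs_regret} with the data-dependent offset
\[
B(f; x_1,\ldots,x_n) \;\deq\; 16\, D\, \log_2(n)\, \left(\sum_{t=1}^n \sup_{f\in\F} |f(x_t)|^r\right)^{1/r} \;+\; \phi_n/2 ,
\]
which does not actually depend on $f$. My plan is therefore to invoke Lemma~\ref{lem:sufficient_for_abs_regret}: it suffices to certify the non-positivity condition \eqref{eq:compare_expectations} for every predictable process $\x$, after which the lemma produces a strategy $(\pred_t)$ satisfying \eqref{eq:abs_regret} (with $|\pred_t|\le \sup_{f\in\F}|f(x_t)|$), and \eqref{eq:abs_regret} is precisely the regret bound being claimed once the definitions of $B$ and the linear loss $\loss(a,b)=-ab$ are unpacked.

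Since $B$ is independent of $f$, condition \eqref{eq:compare_expectations} reduces to the scalar inequality
\[
\En \sup_{f\in\F} \sum_{t=1}^n \epsilon_t f(\x_t) \;\le\; 32\, D\, \log_2(n)\, \En\!\left[\left(\sum_{t=1}^n \sup_{f\in\F}|f(\x_t)|^r\right)^{1/r}\right] + \phi_n
\]
for every $\X$-valued predictable process $\x$. This is exactly the second display of Lemma~\ref{lem:p_moment_lemma}, using the fact that $\En\sup_f \sum_t \epsilon_t f(\x_t) \le \En|\sup_f \sum_t \epsilon_t f(\x_t)|$ and that the $n^{1/r}$-growth assumption with constant $D/2$ is the hypothesis of that lemma.

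Thus the plan has just two steps: (i) choose $B$ as above so that it is data-dependent but $f$-independent, reducing \eqref{eq:compare_expectations} to an in-expectation bound on the sequential Rademacher process; (ii) verify this in-expectation bound by direct appeal to Lemma~\ref{lem:p_moment_lemma}. The only point requiring care is bookkeeping between the $2B$ factor in \eqref{eq:compare_expectations}/\eqref{eq:abs_regret} and the raw constant $32D\log n$ appearing in Lemma~\ref{lem:p_moment_lemma}, which is why $B$ is defined with a leading factor of $16$ rather than $32$. There is no genuine obstacle here; the corollary is a packaging of Lemma~\ref{lem:p_moment_lemma} through the existence criterion of Lemma~\ref{lem:sufficient_for_abs_regret}, and the real work is concealed in the proof of Lemma~\ref{lem:p_moment_lemma} (which itself uses the amplification argument of Section~\ref{sec:amplification} to convert the $n^{1/r}$ growth into a sequence-dependent variation bound).
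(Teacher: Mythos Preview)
Your proposal is correct and is exactly the route the paper takes: the corollary is stated right after Lemma~\ref{lem:p_moment_lemma} with the sentence ``Using Lemma~\ref{lem:sufficient_for_abs_regret}, we can now conclude existence of prediction strategies\ldots,'' and your two steps (choose the $f$-independent $B$ with the factor $16$, then verify \eqref{eq:compare_expectations} via the second display of Lemma~\ref{lem:p_moment_lemma}) are precisely that. The only minor slip is that the paper's linear loss is $\loss(a,b)=-ab/2$ rather than $-ab$, but this is immaterial since the corollary's displayed inequality is literally \eqref{eq:abs_regret} with $2B$ on the right.
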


In addition to being a novel result in the online learning domain, the above corollary serves as an amplification step to boost the in-expectation of bound of Lemma~\ref{lem:p_moment_lemma} to a high probability statement. We then invoke Corollary~\ref{cor:prob_symmetrization} and Lemma~\ref{lem:balance_alpha}  to prove the following theorem.

\begin{theorem}\label{thm:unif}
 	Let $Z_1,\ldots,Z_n$ be a stochastic process with values in $\cZ$ and let $Z'_1,\ldots,Z'_n$ be a tangent sequence. Let $\G\subseteq [-1,1]^\cZ$, $r\in(1,2]$, and define the $r$-variation as
	\begin{align}
		\Var_r = \En_{Z_{1:n}'}\sum_{t=1}^n\sup_{g \in \G} (g(Z_t)-g(Z'_t))^r ~.
	\end{align}
	If \eqref{eq:def_r_growth} holds for $\G$ with constant $D/2$, then with probability at least $1-e\log(n)\exp(-2u^2)$
	\begin{align*}
		&\sup_{g\in\G} \sum_{t=1}^n (g(Z_t)-\En_{t-1} g(Z_t)) \leq 256 D \log_2(n)  \Var_r^{1/r} + u\cdot 8\sqrt{\Var_2+1} + 8\phi_n .
	\end{align*}
\end{theorem}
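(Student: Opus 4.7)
The plan is to work first on the dyadic filtration, derive a high-probability self-normalized tail by combining the existence of a regret-minimizing strategy with Azuma/Freedman-type bounds for real-valued martingales, and then lift the result to general adapted sequences by sequential symmetrization. The key insight is the ``amplification'' pattern laid out in Section~\ref{sec:amplification}: the in-expectation bound of Lemma~\ref{lem:p_moment_lemma} certifies existence of a predictable sequence $(\pred_t)$ that \emph{point-wise dominates} the supremum, and the tail behavior of the supremum is then reduced to the tail of a one-dimensional martingale.

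First, I would apply Corollary~\ref{cor:prob_symmetrization} with the doubled class $\F=\{(z,z')\mapsto g(z)-g(z')/\text{const}:g\in\G\}$, which has $n^{1/r}$-growth of sequential Rademacher complexity (up to a constant absorbed into $D$) by the remark immediately after Definition~\ref{def:growth_rate}. The reduction tells me it suffices to produce, for every pair of predictable processes $\z,\z'$ over the dyadic filtration, a tail bound of the form
\begin{align*}
P\!\left(\sup_{g\in\G}\sum_{t=1}^n\epsilon_t(g(\z_t)-g(\z'_t))-\tilde{B}(g;\z,\z')>u\right)\le e\log(n)\exp(-2u^2),
\end{align*}
where $\tilde{B}$ is symmetric under swaps of $(\z_t,\z'_t)$; the two pieces of $\tilde{B}$ I will construct, namely an $r$-variation term and an $\ell_2$ self-normalization term, are both visibly symmetric.

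Next, I invoke the second part of Lemma~\ref{lem:p_moment_lemma} on $\F$ and feed the resulting in-expectation bound into Lemma~\ref{lem:sufficient_for_abs_regret}. This produces a predictable sequence $(\pred_t)$, bounded pointwise by $\sup_{f\in\F}|f(\x_t)|$, such that deterministically on every dyadic path
\begin{align*}
\sup_{f\in\F}\sum_{t=1}^n \epsilon_t f(\x_t) \;\le\; \sum_{t=1}^n \epsilon_t \pred_t \;+\; 32 D\log_2(n)\left(\sum_{t=1}^n\sup_{f\in\F}|f(\x_t)|^r\right)^{\!1/r}+\phi_n.
\end{align*}
This collapses the problem into controlling the one-dimensional martingale $\sum_t\epsilon_t\pred_t$ under the bound $\sum_t\pred_t^2\le \sum_t\sup_{f\in\F}f(\x_t)^2$.

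The amplification step uses the exponential tail \eqref{eq:exp_tail_sup} (applied directly to the real-valued martingale, or equivalently Freedman's bound \eqref{eq:freedman_sup}) together with the balancing argument of Lemma~\ref{lem:balance_alpha}: a geometric peeling of the scale parameter $\alpha$ over $O(\log n)$ dyadic levels converts the family of bounds ``$X\le \alpha V+u/\alpha$ with prob.\ $1-e^{-2u}$'' into the self-normalized statement
\begin{align*}
P\!\left(\sum_{t=1}^n\epsilon_t\pred_t > 8u\sqrt{\textstyle\sum_t\sup_{f\in\F} f(\x_t)^2+1}\right)\;\le\; e\log(n)\exp(-2u^2),
\end{align*}
the $+1$ covering the low-variance regime. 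Combining with the deterministic inequality above and substituting the symmetrized variation quantities, I obtain the dyadic tail bound with the required $\tilde{B}$. Finally I push this back to the original process via Corollary~\ref{cor:prob_symmetrization}, and apply Jensen's inequality (using concavity of $x\mapsto x^{1/r}$ and $x\mapsto\sqrt{x}$) to pull $\En_{Z'_{1:n}}$ inside the nonlinearities, producing exactly $\Var_r^{1/r}$ and $\sqrt{\Var_2+1}$ on the right-hand side.

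The main obstacle is the peeling step: the $1/r$-term coming from regret is deterministic (free), but the $\sqrt{\Var_2+1}$ term must have the \emph{random} $\Var_2$ inside the square root, not a worst-case path bound, so naive application of Azuma-Hoeffding is insufficient. This forces the use of the exponential/Freedman tail with data-adaptive variance and a union bound over $O(\log n)$ scales of $\alpha$ via Lemma~\ref{lem:balance_alpha}, which is the source of the $e\log(n)$ prefactor in the final probability. Tracking constants through symmetrization, doubling of $\F$, Jensen, and the peeling is tedious but mechanical; the conceptual core is the three-way traffic between the in-expectation bound, the deterministic strategy, and the scalar martingale tail.
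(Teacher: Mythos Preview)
Your proposal matches the paper's proof: Lemma~\ref{lem:p_moment_lemma} (second part) supplies the in-expectation bound that, via Lemma~\ref{lem:sufficient_for_abs_regret}, yields the deterministic regret inequality (this is exactly Corollary~\ref{cor:final_results}); the exponential tail \eqref{eq:exp_tail_sup} is then combined with the peeling of Lemma~\ref{lem:balance_alpha}, Corollary~\ref{cor:prob_symmetrization} transfers to general processes, and Jensen handles the $1/r$-power.

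The one place to watch is the order of the last two steps. The paper symmetrizes \emph{first}---for each fixed $\alpha$, the dyadic tail $\exp(-2\alpha u)$ fits the $\Gamma\exp\{-\mu(u)\}$ hypothesis of Corollary~\ref{cor:prob_symmetrization}---and only \emph{then} applies Lemma~\ref{lem:balance_alpha} to the already-symmetrized bound, with $Y(g)=\Var_2$ directly. In your plan you peel over $\alpha$ on the dyadic side and symmetrize afterwards; but after peeling, the dyadic statement is a bound at threshold $0$ with a fixed probability $e\log(n)\exp(-2u^2)$, which does not furnish the increasing-$\mu$ tail family that Lemma~\ref{lem:tail_comparison} (and hence Corollary~\ref{cor:prob_symmetrization}) requires for the Panchenko-type transfer. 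Swapping the two steps resolves this immediately and is the only adjustment needed; as a bonus, the paper's order also avoids the extra Jensen step for $\sqrt{\cdot}$, since $\Var_2$ appears linearly before the balancing lemma is invoked.
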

We remark that the tail bound can be viewed as a ratio inequality (see \cite{pena2008self,GinKol06}) of the form \eqref{eq:banach_tail}, where the deviations are scaled by the square root of the variance.

\section{Finer control via per-function variation}
\label{sec:per_func}

From the point of view of the previous section (and Theorem~\ref{thm:equivalence_type}), all classes with sequential Rademacher complexity growth $n^{1/2}$ are treated equally. However, classes with such a growth can be as simple as a set consisting of two functions, or as complex as a set of linear functions indexed by a ball in the infinite-dimensional Hilbert space. In this section, a different complexity measure will be used for the regime when the $n^{1/2}$ growth hides the difference in complexity. This measure will be given by sequential covering numbers (and, as a consequence, by the offset Rademacher complexity). In the regime $\alpha^{-q}$, $q\in[0,2]$, for the growth of sequential entropy, we exhibit a finer analysis of the variation term that allows part of the variance to be adapted to the function. 
 
Let $q\in(0,2]$. We say that a class $\F\subseteq [-1,1]^\X$ has the $\gamma^{-q}$ growth (as $\gamma$ decreases) of sequential entropy if there is a constant $C$ such that for all $\gamma\in(0,1]$,
$$
\log \mathcal{N}_2(\F,\gamma,n) \leq C\gamma^{-q}.
$$
As for sequential Rademacher complexity, it is easy to check that the class $\G$ and the derived class of functions $(z,z')\mapsto f(z,z')=g(z)-g(z')$ have the same growth of sequential entropy. Moreover, this growth controls the rate of growth of the offset Rademacher complexity, as shown in \cite{RakSri14nonparam}. In particular, for the finite function class,
$$\Rad_n^{\mrm{off}}(\F, 1, \alpha) \leq \frac{8\log |\F|}{\alpha},$$
while for a parametric class of ``dimension'' $d$ (such that $\cN(\F,\gamma,n)\leq (C'/\gamma)^d$ for some $C'>0$),
$$\Rad_n^{\mrm{off}}(\F, 1, \alpha) \leq \frac{Cd\log (n)}{\alpha},$$
and for a class with sequential entropy growth $q\in(0,2)$,  
$$\Rad_n^{\mrm{off}}(\F, 1, \alpha) \leq C \alpha^{- \frac{2-q}{2+q}} n^{\frac{q}{2+q}}$$ for some absolute constant $C$ (the bound gains an extra logarithmic factor at $q=2$). In this last nonparametric regime, Corollary~\ref{cor:dev_above_off_rad} implies that for any $u>0$,
$$
P\Bigg(\sup_{f \in \F}\left\{  \sum_{t=1}^n \epsilon_t f(\x_t) - \frac{\alpha}{2} f(\x_t)^2 \right\} - C \alpha^{-\frac{2-r}{2+r}} n^{\frac{r}{2+r}} >   u \Bigg)   \leq \exp\left\{-\alpha u\right\},
$$ 
and the analogous statements hold for the finite and parametric cases. As the next Theorem shows, the offset Rademacher complexity $\Rad_n^{\mrm{off}}$ brings out (for smaller classes) the finer complexity control obscured by the sequential Rademacher complexity which only provides $\Omega(n^{1/2})$ bounds.

\begin{theorem}
	\label{thm:perfunction}
	Let $Z_1,\ldots,Z_n$ be a discrete-time process with values in $\cZ$ and let $Z'_1,\ldots,Z'_n$ be a tangent sequence. Let $\G\subseteq [-1,1]^\cZ$ and define function-dependent variance as
	\begin{align}
		\Var_2(g) = \En_{Z_{1:n}'}\sum_{t=1}^n (g(Z_t)-g(Z'_t))^2.
	\end{align}
	If $\G$ exhibits an $\gamma^{-q}$ growth of sequential entropy, then there exists a constant $C$ such that for any $u>0$, with probability at least $1-e\log (n)\exp\{-u^2\}$,
		\begin{align}
			\sup_{g\in\G} \sum_{t=1}^n (g(Z_t)-\En_{t-1} g(Z_t)) &\leq  Cn^{\frac{q}{4}}\left(\Var_2(g) +2\right)^\frac{2-q}{4} + u\cdot 2\sqrt{2}\sqrt{\Var_2(g) +2}.
		\end{align}
		If $\G$ is finite, with the same probability it holds that
		\begin{align}
			\sup_{g\in\G} \sum_{t=1}^n (g(Z_t)-\En_{t-1} g(Z_t)) &\leq  C\sqrt{\log |\G|}\sqrt{\Var_2(g) +2} + u\cdot 2\sqrt{2}\sqrt{\Var_2(g) +2},
		\end{align}
		while for the parametric case, 
		\begin{align}
			\sup_{g\in\G} \sum_{t=1}^n (g(Z_t)-\En_{t-1} g(Z_t)) &\leq  C\sqrt{d\log n}\sqrt{\Var_2(g) +2} + u\cdot 2\sqrt{2}\sqrt{\Var_2(g) +2}.
		\end{align}
\end{theorem}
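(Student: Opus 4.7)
The plan is to first reduce the claim to a bound under the dyadic filtration via Corollary~\ref{cor:prob_symmetrization}, and then on the dyadic side to combine Corollary~\ref{cor:dev_above_off_rad} applied over a geometric grid of the offset parameter $\alpha$ with a union bound over the grid, so that each $g\in\G$ can retroactively pick the scale $\alpha$ that optimally balances complexity against deviation.

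For symmetrization, let $V_g=\sum_{t=1}^n(g(\z_t)-g(\z'_t))^2$ along a pair of dyadic-predictable processes $\z,\z'$, noting $\E' V_g=\Var_2(g)$, and set
\[
\tilde B_u(g;\z_{1:n},\z'_{1:n}) \;=\; C\,n^{q/4}(V_g+2)^{(2-q)/4} + C'\,u\,\sqrt{V_g+2}.
\]
Because the summand of $V_g$ is a square, $\tilde B_u$ is invariant under swapping each pair $(\z_i,\z'_i)$, so the symmetry hypothesis \eqref{eq:symmetry_condition_B} of Corollary~\ref{cor:prob_symmetrization} holds; by Jensen's inequality applied to the concave functions $x\mapsto(x+2)^{(2-q)/4}$ and $x\mapsto\sqrt{x+2}$ on $x\geq 0$, $\E'\tilde B_u(g)$ is bounded by the target right-hand side with $\Var_2(g)$ in place of $V_g$. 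It therefore suffices to establish, on the dyadic side, a tail bound of the shape $P(\sup_g\{\sum_t\epsilon_t(g(\z_t)-g(\z'_t))-\tilde B_u(g;\z,\z')\}\geq 0)\leq \log(n)\exp(-c u^2)$ and then invoke the $\mu(w)=c w^2$ form of Lemma~\ref{lem:tail_comparison} (as used in Corollary~\ref{cor:prob_symmetrization}) to transfer this into the advertised $e\log(n)\exp(-u^2)$ tail on the actual process.

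For the dyadic bound, work with the scaled difference class $\tilde\F=\{(z,z')\mapsto(g(z)-g(z'))/2:g\in\G\}\subseteq[-1,1]^{\cZ\times\cZ}$, whose sequential entropy inherits the $\gamma^{-q}$ growth of $\G$. Apply Corollary~\ref{cor:dev_above_off_rad} to $\tilde\F$ at each $\alpha_j=2^{-j}$, $j=0,1,\ldots,\lceil\log_2(4n)\rceil$, with the deviation parameter tuned so that every scale contributes tail $\exp(-cu^2)$; the union bound over these $O(\log n)$ scales costs only the advertised logarithmic factor. Rewriting in terms of $f_g=g(z)-g(z')=2\tilde f_g$, the joint event is: for every $j$ and every $g$,
\[
\sum_{t=1}^n\epsilon_t\bigl(g(\z_t)-g(\z'_t)\bigr) \;\leq\; \tfrac{\alpha_j}{4}\,V_g \;+\; \Rad_n^{\mrm{off}}(\tilde\F,1,\alpha_j) \;+\; \tfrac{c'\,u^2}{\alpha_j}.
\]
For each $g$, pick $j=j^*(g)$ minimizing the right-hand side. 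Plugging in $\Rad_n^{\mrm{off}}(\tilde\F,1,\alpha)\leq C\alpha^{-(2-q)/(2+q)}n^{q/(2+q)}$, elementary calculus exhibits two candidate minima: balancing $\alpha V_g$ against $\Rad_n^{\mrm{off}}$ produces the complexity term $n^{q/4}(V_g+2)^{(2-q)/4}$ (dominant for $u\lesssim(n/V_g)^{q/4}$), while balancing $\alpha V_g$ against $u^2/\alpha$ produces the deviation term $u\sqrt{V_g+2}$ (dominant for $u\gtrsim(n/V_g)^{q/4}$); the true minimum is upper-bounded by the sum of the two. Since $V_g\leq 4n$, the geometric grid realizes each candidate optimum up to a factor of two, so only an absolute constant is lost. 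The finite and parametric cases follow identically with $\Rad_n^{\mrm{off}}(\tilde\F,1,\alpha)\leq 8\log|\G|/\alpha$ or $\leq Cd\log(n)/\alpha$, where the common $1/\alpha$ dependence collapses the optimization into a single balance $\alpha\sim\sqrt{\text{complexity}/(V_g+2)}$ producing the stated $\sqrt{\text{complexity}\cdot(V_g+2)}$ bound.

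The main obstacle is the per-$g$ three-term optimization over the grid: one must verify uniformly in $u$ and in $V_g$ that a single $\log n$-sized geometric grid realizes $\inf_{\alpha>0}\{\alpha V_g/4+\Rad_n^{\mrm{off}}(\tilde\F,1,\alpha)+c'u^2/\alpha\}$ up to absolute constants, by separately handling the complexity-dominated and deviation-dominated regimes and matching them cleanly at the transition $u\sim(n/V_g)^{q/4}$. The additive $+2$ inside each variance factor is inserted to absorb the degenerate case $V_g\to 0$ while keeping $\sqrt{V_g+2}$ bounded away from zero; the specific constants (the $2\sqrt 2$ deviation factor and the $e$ prefactor on the probability) trace back to the shift $\mu^{-1}(1)$ in Lemma~\ref{lem:tail_comparison}, composed with the reparametrization $u\mapsto u^2/\alpha$ used to convert the offset-Rademacher tail $\exp(-\alpha u/2)$ into the Gaussian tail $\exp(-u^2)$ at each grid level.
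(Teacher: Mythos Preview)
Your proposal uses the correct ingredients---Corollary~\ref{cor:dev_above_off_rad}, the symmetrization of Corollary~\ref{cor:prob_symmetrization}, and a geometric grid over the offset parameter $\alpha$---but you have inverted the order of the symmetrization and the grid relative to the paper, and this inversion creates a genuine gap.

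Corollary~\ref{cor:prob_symmetrization} (through Lemma~\ref{lem:tail_comparison}) requires a tail bound $P(\sup_g\{\cdots-\tilde B(g)\}>w)\le\Gamma\exp\{-\mu(w)\}$ for a \emph{fixed} $\tilde B$ and \emph{all} thresholds $w>0$; the Jensen step from the single-sample process to the double-sample process fails for the bare indicator $\ind{\cdot>0}$ and must pass through the convex functions $\phi(x)=\mu([x-a]_+)$. Your $\tilde B_u$ carries the term $C'u\sqrt{V_g+2}$, so for each fixed $u$ the dyadic grid argument gives you only the single threshold $w=0$. Since the factor $\sqrt{V_g+2}$ depends on $g$, the $u$-dependence cannot be pulled outside the supremum to recover a full tail in $w$, and your appeal to ``the $\mu(w)=cw^2$ form of Lemma~\ref{lem:tail_comparison}'' is not justified as stated.

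The paper sidesteps this by symmetrizing \emph{first}, separately at each fixed $\alpha$: with $\tilde B(g)=\tfrac{\alpha}{2}V_g+C\alpha^{-(2-q)/(2+q)}n^{q/(2+q)}$ the dyadic offset-Rademacher bound of Corollary~\ref{cor:dev_above_off_rad} already holds at every threshold with $\mu(u)=\alpha u/2$, so Corollary~\ref{cor:prob_symmetrization} applies verbatim and yields, for each $\alpha$, the hypothesis of Lemma~\ref{lem:balance_alpha} on the actual process. The grid over $\alpha$---your union bound and per-$g$ balancing---is then exactly the content of Lemma~\ref{lem:balance_alpha}, applied \emph{after} symmetrization with $a=\tfrac{2-q}{2+q}$, $K=Cn^{q/(2+q)}$, $Y(g)=\tfrac12\Var_2(g)$. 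The remedy is therefore simply to reverse your two steps: symmetrize at fixed $\alpha$, then grid.
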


The finite and parametric cases can be thought of as a ``$q=0$'' regime. Here, we have a  bound that depends on $n$ at most logarithmically. On the other hand, for $q\geq 2$ the term $n^{\frac{q}{4}} (\Var_2+1)^{\frac{2-q}{4}}$ is replaced with $n^{1-1/q}$, without any per-function adaptivity (as studied in the previous section). Between these two regimes, we obtain an interpolation, whereby the $1/2$ power is split into a non-adaptive part $n^{\frac{q}{4}}$ and the adaptive part $(\Var_2+1)^{\frac{2-q}{4}}$. This constitutes a finer analysis of classes with martingale type 2.

We may compare the bound of Theorem~\ref{thm:perfunction} in the finite case to the in-expectation bound of \cite{massart2013around} in terms of ``weak variance'' for i.i.d. zero mean random variables $Z_{1},\ldots, Z_n \in \reals^d$:
$$\En\left[\max_{j\leq d} | \sum_{t=1}^n \epsilon_t Z_{t,j} | \right]\leq \sqrt{2\ln (2d) \En \max_{j\leq d} \sum_{t=1}^n Z_{t,j}^2}.$$
In contrast to this bound, Theorem~\ref{thm:perfunction} matches the coordinate $j$ on the left-hand side to the variance of the $j$th coordinate on the right-hand side. Further, our bound holds for martingale difference sequences rather than i.i.d. random vectors. Finally, Theorem~\ref{thm:perfunction} holds well beyond the finite case.


\section{Some Open Questions}
The following are a few open-ended questions raised by this work:
\begin{enumerate}
 \item In the definition of martingale type, can we replace $ \E{\left( \sum_{t=1}^n \E'_{t-1}\sup_{g \in \G}  \left|g(Z_t) - g(Z'_t)\right|^p \right)^{1/p}}$ with $\E{\left(\sum_{t=1}^n \sup_{g\in\G }\left|g(Z_t) - \Es{t-1}{g}\right|^p\right)^{1/p}}$ and reach the same conclusions? The latter version of variation is closer to the generalization of the martingale type for Banach spaces. 
\item If for some $r \in (1,2]$, sequential Rademacher complexity exhibits $n^{1/r}$ growth rate, then does $\G$ have martingale type $r$? Currently, we only prove martingale type $p$ for any $p<r$. For the case of Banach spaces (linear $g$), the above question is answered in the positive in the work of Pisier \cite{Pisier75}. However, the result of \cite{Pisier75} relies on the notions of uniform convexity or uniform smoothness which are specific to linear functionals and Banach spaces. 
\item Is it possible to get a mix of uniform and per-funtion variance for general function classes with martingale type $2$? In Section \ref{sec:per_func}, for martingale type $2$ we prove a finer control through per function variance. A natural question is whether one can replace the $n$-dependent part by uniform variance terms thus giving a mix of per-function and uniform variance in the same bound.
\end{enumerate}

\appendix

\section{Proofs}

\begin{lemma}
	\label{lem:gd}
	The update in \eqref{eq:gradient_descent} satisfies
	$$\forall z_1,\ldots,z_n\in \mathcal{B}, ~~~ \sum_{t=1}^n \inner{\pred_t-f, z_t} \leq \sqrt{n}.$$
\end{lemma}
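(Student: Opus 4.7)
The plan is to use the standard potential argument for projected gradient descent, taking $\Phi_t = \tfrac{1}{2}\|\pred_t - f\|^2$ as the potential. First I would exploit the non-expansiveness of Euclidean projection onto the convex set $\cB$: since $f \in \cB$, we have $\|\pred_{t+1} - f\| = \|\text{Proj}_{\cB}(\pred_t - n^{-1/2}z_t) - f\| \leq \|\pred_t - n^{-1/2}z_t - f\|$. Squaring and expanding the right-hand side gives
\begin{align*}
\|\pred_{t+1}-f\|^2 \leq \|\pred_t-f\|^2 - 2 n^{-1/2}\inner{\pred_t - f, z_t} + n^{-1}\|z_t\|^2,
\end{align*}
which after rearrangement yields the per-step inequality
\begin{align*}
\inner{\pred_t - f, z_t} \leq \tfrac{\sqrt{n}}{2}\left(\|\pred_t-f\|^2 - \|\pred_{t+1}-f\|^2\right) + \tfrac{1}{2\sqrt{n}}\|z_t\|^2.
\end{align*}

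Next I would sum over $t=1,\ldots,n$ so that the potential terms telescope. Using $\pred_1 = 0$ and $\|f\| \leq 1$ to bound $\|\pred_1 - f\|^2 \leq 1$, discarding the nonnegative final term $\|\pred_{n+1}-f\|^2 \geq 0$, and using $\|z_t\| \leq 1$ to bound $\sum_{t=1}^n \|z_t\|^2 \leq n$, I obtain
\begin{align*}
\sum_{t=1}^n \inner{\pred_t - f, z_t} \leq \tfrac{\sqrt{n}}{2} \cdot 1 + \tfrac{1}{2\sqrt{n}}\cdot n = \sqrt{n},
\end{align*}
which is exactly the claimed bound.

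There is no real obstacle here --- this is a two-line computation. The only subtlety worth flagging is that the proof crucially uses the non-expansiveness property of Euclidean projection onto a convex set (so that the projection step does not hurt the potential), and that the step size $n^{-1/2}$ is precisely the one that balances the potential decrease against the accumulated gradient-norm term to optimize the constant.
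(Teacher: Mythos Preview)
Your proof is correct and follows exactly the same approach as the paper's: the non-expansiveness of the Euclidean projection gives $\|\pred_{t+1}-f\|^2 \leq \|\pred_t - n^{-1/2}z_t - f\|^2$, expansion and rearrangement produce the per-step inequality, and telescoping plus the bounds $\|\pred_1-f\|^2\le 1$ and $\|z_t\|\le 1$ finish the argument. You have simply filled in the details that the paper's two-line proof leaves implicit.
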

\begin{proof}[\textbf{Proof of Lemma~\ref{lem:gd}}]
The following two-line proof is standard. By the property of a projection,
$$\norm{\pred_{t+1}-f}^2 =  \norm{\text{Proj}_{\cB}(\pred_{t}-n^{-1/2}z_t)-f}^2 \leq \norm{(\pred_{t}-n^{-1/2}z_t)-f}^2 = \norm{\pred_{t}-f}^2 + \frac{1}{n}\norm{z_t}^2 - 2n^{-1/2}\inner{\pred_t-f, z_t}.$$
Rearranging,
$$2n^{-1/2}\inner{\pred_t-f, z_t}  \leq  \norm{\pred_{t}-f}^2 - \norm{\pred_{t+1}-f}^2 +  \frac{1}{n}\norm{z_t}^2.$$
Summing over $t=1,\ldots,n$ yields the desired statement.
\end{proof}

\begin{lemma}
	\label{lem:BDG}
	With the notation of Lemma~\ref{thm:banach_space},
	$$\En \max_{s=1,\ldots,n}  \norm{ \sum_{t=1}^s  Z_t}  \leq  \left(2.5 R_{\max}+\sqrt{3}\right) \En\sqrt{V_n} + 2.5 R_{\max}\ .
	$$
\end{lemma}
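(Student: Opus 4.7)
The plan is to exploit the fact that the prediction strategy in Lemma~\ref{lem:pred_seq_regret} is \emph{anytime}: the step sizes $\eta_t$ and the iterates $\pred_{t+1}$ depend only on $z_1,\ldots,z_t$ and not on the horizon $n$. Consequently, applying Lemma~\ref{lem:pred_seq_regret} separately with the truncated sequence $Z_1,\ldots,Z_s$ for each $s\leq n$ produces the \emph{same} predictable process $(\pred_t)_{t\leq s}$, and yields the pathwise inequality
\begin{align*}
\norm{\sum_{t=1}^s Z_t} \leq 2.5\, R_{\max}\!\left(\sqrt{V_s}+1\right) + \sum_{t=1}^s \inner{\pred_t, Z_t}
\end{align*}
for every realization and every $s\leq n$, which is exactly the partial-sum version of \eqref{eq:banach_space_regret_norm}.

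Taking the maximum over $s\leq n$ on the left-hand side and using $V_s\leq V_n$ on the right-hand side gives
\begin{align*}
\max_{s\leq n}\norm{\sum_{t=1}^s Z_t} \leq 2.5\, R_{\max}\!\left(\sqrt{V_n}+1\right) + \max_{s\leq n} \sum_{t=1}^s \inner{\pred_t, Z_t}.
\end{align*}
The process $M_s \deq \sum_{t=1}^s \inner{\pred_t, Z_t}$ is a scalar martingale, since $\pred_t$ is predictable with respect to $\sigma(Z_1,\ldots,Z_{t-1})$. Moreover, because $\pred_t\in\F$ lies in the unit ball of $\mathfrak{B}_*$, we have $|\inner{\pred_t, Z_t}|\leq \norm{Z_t}$, and hence the quadratic variation of $M$ satisfies $\sum_{t=1}^n \inner{\pred_t, Z_t}^2 \leq V_n$.

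At this point I would invoke the scalar Burkholder-Davis-Gundy inequality at $p=1$ with the sharp constant $\sqrt{3}$ from \cite{burkholder2002best}, yielding
\begin{align*}
\En \max_{s\leq n} \sum_{t=1}^s \inner{\pred_t, Z_t} \leq \En \max_{s\leq n} |M_s| \leq \sqrt{3}\,\En \sqrt{\sum_{t=1}^n \inner{\pred_t, Z_t}^2} \leq \sqrt{3}\,\En\sqrt{V_n}.
\end{align*}
Combining the last two displays and taking expectations gives
\begin{align*}
\En \max_{s\leq n}\norm{\sum_{t=1}^s Z_t} \leq 2.5\, R_{\max}\,\En\sqrt{V_n} + 2.5\, R_{\max} + \sqrt{3}\,\En\sqrt{V_n} = \left(2.5\, R_{\max}+\sqrt{3}\right)\En\sqrt{V_n} + 2.5\, R_{\max},
\end{align*}
as claimed.

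The only non-routine point in this argument is the anytime observation at the beginning: it is what makes the deterministic regret bound bootstrap into a maximal inequality without paying an extra factor depending on $n$. Once the scalar martingale $M_s$ has been isolated and its quadratic variation controlled by $V_n$, the scalar BDG inequality handles the rest, and the constant $\sqrt{3}$ in the final bound is precisely the sharp BDG constant.
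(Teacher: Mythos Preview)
Your proof is correct and follows essentially the same approach as the paper's: both exploit the anytime nature of the regret bound to obtain the pathwise inequality for every prefix $s\leq n$, bound $\sqrt{V_s}$ by $\sqrt{V_n}$, isolate the scalar martingale $M_s=\sum_{t=1}^s\inner{\pred_t,Z_t}$ with quadratic variation dominated by $V_n$, and finish with the scalar BDG inequality at $p=1$ with the sharp constant $\sqrt{3}$ from \cite{burkholder2002best}. The only cosmetic difference is that the paper phrases the first step as $\max_s\{\|\sum_{t\leq s}Z_t\|-M_s\}\leq 2.5R_{\max}(\sqrt{V_n}+1)$ and then invokes sub-additivity of the max, whereas you write the per-$s$ inequality and take the max directly; these are equivalent.
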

\begin{proof}[\textbf{Proof of Lemma~\ref{lem:BDG}}]
	Because of the ``anytime'' property of the regret bound and the strategy definition, we can write \eqref{eq:banach_space_regret_norm} as 
	\begin{align}
		\label{eq:uniform_in_time_norm}
		 \max_{s=1,\ldots,n} \left\{ \norm{ \sum_{t=1}^s  Z_t} - \sum_{t=1}^s   \inner{\pred_t, Z_t} \right\} \leq 2.5 R_{\max} \left(\sqrt{V_n} + 1\right)
	\end{align}
	simply because the right-hand side is largest for $s=n$. Sub-additivity of $\max$ implies
	\begin{align}
		 \max_{s=1,\ldots,n}  \norm{ \sum_{t=1}^s  Z_t}  - 2.5 R_{\max} \left(\sqrt{V_n} + 1\right) \leq \max_{s=1,\ldots,n} \sum_{t=1}^s   \inner{\pred_t, Z_t} . 
	\end{align}
	By the Burkholder-Davis-Gundy inequality (with the constant from \cite{burkholder2002best}),
	\begin{align}
		\En \max_{s=1,\ldots,n} \sum_{t=1}^s   \inner{\pred_t, Z_t} \leq \sqrt{3}\En \left(\sum_{t=1}^n   \inner{\pred_t, Z_t}^2 \right)^{1/2} \leq \sqrt{3}\En\sqrt{V_n} \ .
	\end{align}
	In view of \eqref{eq:uniform_in_time_norm}, we conclude the statement.
\end{proof}

\begin{proof}[\textbf{Proof of Lemma~\ref{lem:pred_seq_regret}}]
	Because of the update form, 
	$$\forall f\in\F, ~~\inner{\pred_{t+1} - f, z_t} \leq \frac{1}{\eta_t}\left( D_\cR(f,\pred_{t})-D_\cR(f, \pred_{t+1}) - D_\cR(\pred_{t+1},\pred_{t})\right).$$
	Summing over $t=1,\ldots,n$,
	\begin{align*}
		\sum_{t=1}^n \inner{\pred_{t+1} - f, z_t} 
		&\leq \eta_1^{-1}D_\cR(f, \pred_1) + \sum_{t=2}^n (\eta_t^{-1}-\eta_{t-1}^{-1}) D_\cR(f,\pred_{t}) - \sum_{t=1}^n \eta_t^{-1} D_\cR(\pred_{t+1},\pred_{t}) \\
		&\leq \eta_1^{-1}R^2_{\max} + \sum_{t=2}^n (\eta_t^{-1}-\eta_{t-1}^{-1}) R^2_{\max} - \sum_{t=1}^n \frac{\eta_t^{-1}}{2}\norm{\pred_{t+1}-\pred_{t}}_*^2 \\
		&\leq R^2_{\max} (\eta_1^{-1}+\eta_n^{-1}) - \sum_{t=1}^n \frac{\eta_t^{-1}}{2}\norm{\pred_{t+1}-\pred_{t}}_*^2,
	\end{align*}
	where we used strong convexity of $\cR$ and the fact that $\eta_t$ is nonincreasing. Next, we write
	\begin{align*}
		\sum_{t=1}^n \inner{\pred_{t} - f, z_t} &= \sum_{t=1}^n \inner{\pred_{t+1} - f, z_t} + \sum_{t=1}^n \inner{\pred_{t} - \pred_{t+1}, z_t} 
	\end{align*}
	and upper bound the second term by noting that
	$$\inner{\pred_{t} - \pred_{t+1}, z_t}\leq \norm{\pred_{t} - \pred_{t+1}}_*\cdot \norm{z_t} \leq \frac{\eta_{t}^{-1}}{2} \norm{\pred_{t} - \pred_{t+1}}_*^2 + \frac{\eta_{t}}{2}\norm{z_t}^2.$$
	Combining the bounds,
	\begin{align}
		\label{eq:md_1}
		\sum_{t=1}^n \inner{\pred_{t} - f, z_t} &\leq R^2_{\max} (\eta_1^{-1}+\eta_n^{-1}) + \sum_{t=1}^n \frac{\eta_{t}}{2}\norm{z_t}^2.
	\end{align}
	Now observe that 	
	\begin{align}
		\textstyle \eta_t = R_{\max}\min\left\{1, \frac{\sqrt{\sum_{s=1}^{t} \norm{z_s}^2}-\sqrt{\sum_{s=1}^{t-1} \norm{z_s}^2}}{\norm{z_t}^2}\right\}  
	\end{align}
	and thus the second term in \eqref{eq:md_1} is upper bounded as
	$$\sum_{t=1}^n \frac{\eta_{t}}{2}\norm{z_t}^2 \leq \frac{R_{\max}}{2}\sqrt{\sum_{s=1}^n \norm{z_s}^2}.$$
	For the first term, we use $\eta_1^{-1} = R_{\max}^{-1}$ and 
	$$\eta_n^{-1} \leq R_{\max}^{-1}\max\left\{1, 2\sqrt{\sum_{s=1}^{t} \norm{z_s}^2} \right\} $$
	Concluding,
	\begin{align}
		\label{eq:md_2}
		\sum_{t=1}^n \inner{\pred_{t} - f, z_t} &\leq R_{\max} \left(2+2.5\sqrt{\sum_{s=1}^{t} \norm{z_s}^2}\right) .
	\end{align}
\end{proof}

\begin{proof}[\textbf{Proof of Lemma~\ref{eq:AB-condition-delapena}}]
	We have
	\begin{align*}
		&\En_{t-1}\exp\left\{\lambda A - \lambda^2 B^2/2 \right\} \\
		&=\En_{t-1}\exp\left\{\lambda \sum_{t=1}^n \inner{\pred_t, Z_t-\En_{t-1} Z'_t} - 2\lambda^2 \sum_{t=1}^n (\norm{Z_t}^2 + \En_{t-1}\norm{Z'_t}^2) \right\} \\
		&\leq \En_{t-1}\exp\left\{\lambda \sum_{t=1}^n \inner{\pred_t, Z_t-Z'_t} - 2\lambda^2 \sum_{t=1}^n (\norm{Z_t}^2 + \norm{Z'_t}^2) \right\} \\
		&\leq \En_{t-1}\En_\epsilon \exp\left\{\lambda \sum_{t=1}^n \epsilon_t\inner{\pred_t, Z_t-Z'_t} - 2\lambda^2 \sum_{t=1}^n (\norm{Z_t}^2 + \norm{Z'_t}^2) \right\} .
	\end{align*}
	Since $\exp$ is a convex function,
	\begin{align*}
		&\En_{t-1}\En_\epsilon\exp\left\{\frac{1}{2}\left(2\lambda \sum_{t=1}^n \epsilon_t\inner{\pred_t, Z_t} - 4\lambda^2 \sum_{t=1}^n \norm{Z_t}^2 \right) + \frac{1}{2}\left(2\lambda \sum_{t=1}^n \inner{\pred_t, -Z'_t} - 4\lambda^2 \sum_{t=1}^n \norm{Z'_t}^2 \right) \right\} \\
		&\leq \frac{1}{2}\En_{t-1}\En_\epsilon\exp\left\{2\lambda \sum_{t=1}^n \epsilon_t\inner{\pred_t, Z_t} - 4\lambda^2 \sum_{t=1}^n \norm{Z_t}^2 \right\}+ \frac{1}{2}\En_{t-1}\En_\epsilon\exp\left\{ 2\lambda \sum_{t=1}^n \epsilon_t\inner{\pred_t, -Z'_t} - 4\lambda^2 \sum_{t=1}^n \norm{Z'_t}^2 \right\}\\
		&= \En_{t-1}\En_\epsilon\exp\left\{2\lambda \sum_{t=1}^n \epsilon_t\inner{\pred_t, Z_t} - 4\lambda^2 \sum_{t=1}^n \norm{Z_t}^2 \right\}\\
		&\leq \En_{t-1}\exp\left\{4\lambda^2 \sum_{t=1}^n |\inner{\pred_t, Z_t}|^2 - 4\lambda^2 \sum_{t=1}^n \norm{Z_t}^2 \right\}\\
		&\leq 1 
	\end{align*}
	since $\|\pred_t\|_*\leq 1$.
\end{proof}

\begin{proof}[\textbf{Proof of Theorem~\ref{thm:equivalence_type}}]
	Let $X_1,\ldots,X_n$ be a discrete time process. We have
\begin{align*}
& \E{\left|\sup_{f \in \F}\left\{\sum_{t=1}^n (f(X_t) - \Es{t-1}{f(X_t)}) \right\}\right| - C \left(\sum_{t=1}^n \En'_{t-1}\sup_{f \in \F} |f(X_t) - f(X'_t) |^p\right)^{1/p}}\\
& ~~~~~\le \multiminimax{\sup_{p_t} \Ex_{X_t \sim p_t}}_{t=1}^n \left[ \sup_{f \in \F}\left\{\sum_{t=1}^n (f(X_t) - \Es{X'_t \sim p_t}{f(X'_t)}) \right\} - C \left(\sum_{t=1}^n \En_{X'_t\sim p_t}\sup_{f \in \F} |f(X_t) - f(X'_t) |^p\right)^{1/p} \right]
\end{align*}
where $\multiminimax{\sup_{p_t} \Ex_{X_t \sim p_t}}_{t=1}^n$ stands for repeated application of the operators: $\sup_{p_1}\Ex_{X_1}\ldots \sup_{p_n}\Ex_{X_n}$. By Jensen's inequality, we upper bound the above expression by
\begin{align*}
&\multiminimax{\sup_{p_t} \En_{X_t, X'_t \sim p_t}}_{t=1}^n \left[ \left|\sup_{f \in \F}\left\{\sum_{t=1}^n (f(X_t) - f(X'_t)) \right\}\right| - C \left(\sum_{t=1}^n \sup_{f \in \F} |f(X_t) - f(X'_t) |^p\right)^{1/p} \right] ~.
\end{align*}
Introducing independent Rademacher random variables $\epsilon_1,\ldots,\epsilon_n$, the preceding expression is equal to
\begin{align*}
	&\multiminimax{\sup_{p_t} \En_{X_t, X'_t \sim p_t} \En_{\epsilon_t} }_{t=1}^n \Bigg[ \left|\sup_{f \in \F}\left\{\sum_{t=1}^n \epsilon_t(f(X_t) - f(X'_t)) \right\}\right| - C \left(\sum_{t=1}^n \sup_{f \in \F} |f(X_t) - f(X'_t) |^p\right)^{1/p} \Bigg]\\
	&\leq \multiminimax{\sup_{x_t, x'_t} \En_{\epsilon_t}}_{t=1}^n \Bigg[ \left|\sup_{f \in \F}\left\{\sum_{t=1}^n \epsilon_t (f(x_t) - f(x'_t)) \right\}\right| - C \left(\sum_{t=1}^n \sup_{f \in \F} |f(x_t) - f(x'_t) |^p\right)^{1/p} \Bigg] ~.
\end{align*}
The latter expression may be written as
\begin{align}
	\label{eq:doubletree}
	\sup_{\x,\x'}\En \Bigg[ \left|\sup_{f \in \F}\left\{\sum_{t=1}^n \epsilon_t (f(\x_t) - f(\x'_t)) \right\}\right| - C \left(\sum_{t=1}^n \sup_{f \in \F} |f(\x_t) - f(\x'_t) |^p\right)^{1/p} \Bigg]
\end{align}
with a supremum ranging over predictable processes $\x=(\x_1,\ldots,\x_n)$ and $\x'=(\x'_1,\ldots,\x'_n)$, each $\x_t,\x'_t:\{\pm1\}^{t-1}\to\cX$. Now define the function class $\G \subset \reals^{\X \times \X}$ as follows:
$$
\G = \{ (x,x') \mapsto f(x) - f(x') : f \in \F\}~.
$$
Trivially, \eqref{eq:doubletree} can be written with this notation as
\begin{align*}
 &\sup_{\x,\x'}\En \Bigg[ \left|\sup_{g \in \G}\left\{\sum_{t=1}^n \epsilon_t g(\x_t, \x'_t) \right\}\right| - C \left(\sum_{t=1}^n \sup_{g \in \G} |g(\x_t,\x'_t) |^p\right)^{1/p} \Bigg] ~. 
\end{align*}
However the complexity of $\G$ is not much larger than that of $\F$:
$$\Rad_n(\G; (\x,\x')) = \En \left| \sup_{g \in \G} \sum_{t=1}^n \epsilon_t g(\x_t, \x'_t)  \right| = \En \left| \sup_{f \in \F} \sum_{t=1}^n \epsilon_t (f(\x_t)-f(\x'_t))  \right| \leq  \Rad_n(\F; \x)+\Rad_n(\F;\x').$$
The first part of the theorem is concluded by applying Lemma~\ref{lem:p_moment_lemma} to the class $\G$.

To prove the second part, we modify the lower bound construction in \cite[Theorem 2]{RakSriTew14}.
Assume that we are given a predictable process $\x$ of length $n$ and that $\x_0$ is any one of the $2^{n}-1$ values in the image of $\x$. Since $\Es{\epsilon}{\left|\sum_{t=1}^n \epsilon_t \right|} \le \sqrt{n}$, we have that
\begin{align*}
 \Rad_n(\F; \x) &= \Es{\epsilon}{ \left| \sup_{f\in\F} \sum_{t=1}^n \epsilon_t f(\x_t)  \right|}\\
 &\le \Es{\epsilon}{\left| \sup_{f\in\F} \sum_{t=1}^n \epsilon_t f(\x_t)  \right|}  - \sup_{f \in \F}|f(\x_0)| \ \Es{\epsilon}{\left|\sum_{t=1}^n \epsilon_t \right|} + \sup_{f \in \F}|f(\x_0)| \ \sqrt{n} \\
 &\le \Es{\epsilon}{\left| \sup_{f\in\F} \sum_{t=1}^n \epsilon_t f(\x_t)  \right|} - \Es{\epsilon}{\left|\sup_{f \in \F} \sum_{t=1}^n \epsilon_t  f(\x_0)\right|}  + \sup_{f \in \F}|f(\x_0)| \ \sqrt{n}\\
 &\le \Es{\epsilon}{\left| \sup_{f\in\F} \sum_{t=1}^n \epsilon_t f(\x_t)  - \sup_{f \in \F} \sum_{t=1}^n \epsilon_t  f(\x_0) \right| }   + \sup_{f \in \F}|f(\x_0)| \ \sqrt{n}\\
 &\le \Es{\epsilon}{\left| \sup_{f\in\F} \sum_{t=1}^n \epsilon_t \left(f(\x_t) - f(\x_0)\right)  \right|}    + \sup_{f \in \F}|f(\x_0)| \ \sqrt{n}\\
 &\le 2 \Es{\epsilon}{\left| \sup_{f\in\F} \sum_{t=1}^n \frac{f(\x_t) + f(\x_0)}{2} -  \frac{1-\epsilon_t}{2} f(\x_t) - \frac{1 + \epsilon_t}{2} f(\x_0) \right|}    + \sup_{f \in \F}|f(\x_0)| \ \sqrt{n}
\end{align*}
Now consider the joint distribution over $X_1,\ldots,X_n$ such that, for every $t \in [n]$,  $P(X_t = \x_0| \epsilon_{t-1},\epsilon_t =1) = 1$ and $P(X_t = \x_t(\epsilon_{1:t-1})| \epsilon_{t-1},\epsilon_t =0) = 1$. Under this distribution, we can rewrite the above inequality as
\begin{align*}
 \Rad_n(\F; \x) & \le 2\,\Es{}{\left| \sup_{f\in\F} \sum_{t=1}^n \left(\Es{t-1}{f(X_t)} -  f(X_t)\right)\right|}    + \sup_{f \in \F}|f(\x_0)| \ \sqrt{n}.
\end{align*}
Since $\F$ is of type $r$, taking $\epsilon'_1,\ldots,\epsilon'_n$ to be an independent Rademacher sequence, we further bound the above term as
\begin{align*}
&2 C\,\Es{}{ \left(\sum_{t=1}^n  \E'_{t-1}\sup_{f\in\F}\left|f(X_t) -  f(X'_t)\right|^r\right)^{1/r}}    + \sup_{f \in \F}|f(\x_0)| \ \sqrt{n}\\
& \leq 2 C\,\Es{}{ \left(\sum_{t=1}^n  \sup_{f\in\F}\left|  \frac{1-\epsilon_t}{2} f(\x_t) + \frac{1 + \epsilon_t}{2} f(\x_0) - \frac{1-\epsilon'_t}{2} f(\x_t) - \frac{1 + \epsilon'_t}{2} f(\x_0) \right|^r\right)^{1/r}}    + \sup_{f \in \F}|f(\x_0)| \ \sqrt{n}\\
& \le 4 C\, \left( n \left(\sup_{f \in \F, t \le n, \epsilon \in \{\pm1\}^n} \left|f(\x_t)\right|^{r} + \sup_{f \in \F} |f(\x_0)|^r\right) \right)^{1/r}   + \sup_{f \in \F}|f(\x_0)| \ \sqrt{n}.
\end{align*}
Since $\x_0$ is one of the elements of the tree $\x$, we further upper bound the expression by
\begin{align*}
8 C\, n^{1/r}\left(\sup_{f \in \F, t \le n, \epsilon \in \{\pm1\}^n} \left|f(\x_t)\right|\right)    + \sup_{f \in \F}|f(\x_0)| \ \sqrt{n} \le 16 C\, n^{1/r}\left(\sup_{f \in \F, t \le n, \epsilon \in \{\pm1\}^n} \left|f(\x_t)\right|\right) ~.  
\end{align*}
In the last step we used the fact that $r \le 2$ and so $\sqrt{n} \le n^{1/r}$.
\end{proof}

\begin{proof}[\textbf{Proof of Lemma~\ref{lem:tail_comparison}}]
	We have 
	$$P(\xi\geq u)\leq \frac{\En \phi(\xi)}{\phi(u)}\leq \frac{\En \phi(\nu)}{\phi(u)}\leq  \frac{1}{\phi(u)}\left(\phi(0)+\int_0^\infty \phi'(x)P(\nu\geq x) dx \right).$$
	Choose $a = u - \mu^{-1}(1)$, where $\mu^{-1}$ is the inverse function. If $a<0$, the conclusion of the lemma is true since $\Gamma\geq 1$. In the case of $a\geq 0$, we have $\phi(0)=0$. The above upper bound becomes
	\begin{align*}
		P(\xi\geq u) &\leq \frac{\Gamma}{\phi(u)}\int_0^\infty \phi'(x)\exp\{- \mu(x)\} dx = \frac{\Gamma}{\phi(u)}\int_a^\infty \mu'(x)\exp\{- \mu(x)\} dx \\
		&= \frac{\Gamma}{\mu(u-a)} \left[ -\exp\{- \mu(x)\}\right]_a^\infty = \Gamma \exp\{- \mu(a)\} =  \Gamma \exp\{- \mu(u-\mu^{-1}(1))\}.
	\end{align*}	
	If $\mu(b)=c b$, we have 
	$$P(\xi\geq u)\leq \Gamma \exp\{-c(u-1/c) \} = \Gamma \exp\{1-c u\}.$$
	If $\mu(b)=c b^2$, we have
	$$P(\xi\geq u)\leq \Gamma \exp\{-c(u-1/\sqrt{c})^2 \} \leq \Gamma \exp\{-c u^2/4\}$$
	whenever $u \geq 2/\sqrt{c}$. If $u\leq 2/\sqrt{c}$, the conclusion is valid since $\Gamma\geq 1$.
\end{proof}

\begin{proof}[\textbf{Proof of Corollary~\ref{cor:prob_symmetrization}}]
Let 
$$\xi(Z_1,\ldots,Z_n, Z_1', \ldots,Z_n') = \sup_{g} \sum_{t=1}^n (g(Z_t)-g(Z_t')) - \tilde{B}(g; Z_1,Z_1',\ldots,Z_n, Z_n')$$
and
$$\nu(Z_1,\ldots,Z_n) = \sup_{g} \sum_{t=1}^n (g(Z_t)-\En_{t-1} g(Z_t')) - \En_{Z_{1:n}'} \tilde{B}(g; Z_1,Z_1',\ldots,Z_n, Z_n').$$
Then for any convex $\phi:\reals\to\reals$,
$$\En\phi(\nu) \leq \En\phi(\xi)$$
using convexity of the supremum. The problem is now reduced to obtaining tail bounds for
\begin{align*}
	&P\left(\sup_{f} \sum_{t=1}^n (g(Z_t)-g(Z_t')) - \tilde{B}(g; Z_1,Z_1',\ldots,Z_n, Z_n')>u\right) .
\end{align*}
Write the probability as 
$$\En\ind{\xi(Z_1,\ldots,Z_n, Z_1', \ldots,Z_n')>u}.$$
We now proceed to replace the random variables from $n$ backwards with a dyadic filtration. Let us start with the last index. Renaming $Z_n$ and $Z_n'$ we see that
{\small\begin{align*}
	&\En\ind{\sup_{g} \sum_{t=1}^n (g(Z_t)-g(Z_t')) - \tilde{B}(g; Z_1,Z_1',\ldots,Z_n,Z_n') >u} \\
	&=\En\ind{\sup_{g} \sum_{t=1}^{n-1} (g(Z_t)-g(Z'_t)) + (g(Z'_n)-g(Z_n)) - \tilde{B}(g; Z_1,Z_1',\ldots,Z_n,Z_n')>u}\\
	&=\En\En_{\epsilon_n}\ind{\sup_{g} \sum_{t=1}^{n-1} (g(Z_t)-g(Z'_t)) + \epsilon_n(g(Z_n)-g(Z'_n)) - \tilde{B}(g; Z_1,Z_1',\ldots,Z_n,Z_n')>u}\\
	&\leq \En\sup_{z_n,z_n'}\En_{\epsilon_n}\ind{\sup_{g} \sum_{t=1}^{n-1} (g(Z_t)-g(Z'_t)) + \epsilon_n(g(z_n)-g(z'_n)) - \tilde{B}(g; Z_1,Z_1',\ldots,Z_{n-1},Z_{n-1}', z_n,z_n')>u}.
\end{align*}}
Proceeding in this manner for step $n-1$ and back to $t=1$, we obtain an upper bound of
\begin{align*}
	&\sup_{z_1,z_1'}\En_{\epsilon_1}\ldots \sup_{z_n,z_n'}\En_{\epsilon_n}\ind{\sup_{g} \sum_{t=1}^{n} \epsilon_t(g(z_t)-g(z'_t)) - \tilde{B}(g; z_1,z_1',\ldots,z_n,z_n')>u}\\
	&=\sup_{\x} \En \ind{\sup_{g} \sum_{t=1}^{n} \epsilon_t f_g(\x_t) - B(g; \x_1,\ldots,\x_n)>u}.
\end{align*}
\end{proof}

\begin{proof}[\textbf{Proof of Lemma~\ref{lem:per_tree_abs_regret}}]
To check the desired statement \eqref{eq:abs_regret_per_tree} for the given predictable process $\x$, we verify that
\begin{align}\label{eq:show}
 \multiminimax{\inf_{\hat{y}_t} \max_{y_t \in \{\pm 1\}}}_{t=1}^n \left[\sum_{t=1}^n |\hat{y}_t - y_t| - \inf_{f \in \F} \left\{ \sum_{t=1}^n |f(\x_t(y_{1:t-1})) - y_t| + B(f; \x) \right\} \right] \le 0 
\end{align}
where each infimum is taken over the set $\left\{\hat{y}_t : |\hat{y}_t| \le \sup_{f \in \F} |f(\x_t(y_{1:t-1}))| \right\}$. To this end, 
\begin{align*}
& 2\times \multiminimax{\inf_{\hat{y}_t} \max_{y_t \in \{\pm 1\}}}_{t=1}^n \left[\sum_{t=1}^n |\hat{y}_t - y_t| - \inf_{f \in \F} \left\{ \sum_{t=1}^n |f(\x_t(y_{1:t-1})) - y_t| + B(f; \x)\right\}\right] \\
& = \multiminimax{\inf_{\hat{y}_t} \sup_{y_t \in \{\pm 1\}}}_{t=1}^n \left[\sum_{t=1}^n (- \hat{y}_t \cdot y_t) - \inf_{f \in \F} \left\{ \sum_{t=1}^n (- f(\x_t(y_{1:t-1})) \cdot y_t) + 2B(f;\x) \right\}\right] \\
& = \multiminimax{\sup_{p_t} \underset{y_t \sim p_t}{\En}}_{t=1}^n \left[\sum_{t=1}^n \inf_{\hat{y}_t} \left\{- \hat{y}_t \Es{}{y_t}\right\} - \inf_{f \in \F} \left\{ \sum_{t=1}^n - f(\x_t(y_{1:t-1})) \cdot y_t + 2B(f; \x) \right\}\right] 
\end{align*}
where $p_t$ ranges over distributions on $\{\pm1\}$. In the last step, we have used the minimax theorem, and then the technique that can be found, for instance, in \cite{AbeAgrBarRak09colt, RakSriTew14jmlr}. Next, we replace the infima by (sub)optimal choices corresponding to the value of $f$. This yields
\begin{align*}
	&\multiminimax{\sup_{p_t} \underset{y_t \sim p_t}{\En}}_{t=1}^n \left[\sup_{f \in \F}\left\{ \sum_{t=1}^n f(\x_t(y_{1:t-1})) \cdot y_t - \sup_{\hat{y}_t} \left\{ \hat{y}_t \Es{}{y_t}\right\} - 2B(f; \x) \right\} \right] \\
& \le \multiminimax{\sup_{p_t} \underset{y_t \sim p_t}{\En}}_{t=1}^n \left[\sup_{f \in \F}\left\{ \sum_{t=1}^n f(\x_t(y_{1:t-1})) \cdot ( y_t - \Es{}{y_t} ) - 2B(f; \x) \right\}  \right] \\
& \le \multiminimax{\sup_{p_t} \underset{y_t, y'_t \sim p_t}{\En} }_{t=1}^n \left[\sup_{f \in \F}\left\{ \sum_{t=1}^n f(\x_t(y_{1:t-1})) \cdot ( y_t - y'_t) - 2B(f; \x) \right\}\right] 
\end{align*}
where the last step is by Jensen's inequality. We further upper bound the above expression by 
\begin{align*}
& \multiminimax{\sup_{p_t} \underset{y_t, y'_t \sim p_t}{\En} \max_{y''_t} }_{t=1}^n \left[\sup_{f \in \F}\left\{ \sum_{t=1}^n f(\x_t(y''_{1:t-1})) \cdot ( y_t - y'_t) - 2B(f; \x) \right\}\right] 
\end{align*}
where $y''_t$ ranges over $\{\pm 1\}$. Since $y_t,y'_t$ can be renamed, we introduce the random signs 
\begin{align*}
& \multiminimax{ \sup_{p_t} \underset{y_t, y'_t \sim p_t}{\En} \underset{\epsilon_t}{\En} \max_{y''_t} }_{t=1}^n \left[\sup_{f \in \F}\left\{ \sum_{t=1}^n \epsilon_t  ( y_t - y'_t) f(\x_t(y''_{1:t-1})) - 2B(f; \x)\right\}\right] \\
&\leq \multiminimax{ \max_{y_t, y'_t} \underset{\epsilon_t}{\En} \max_{y''_t} }_{t=1}^n \left[\sup_{f \in \F}\left\{ \sum_{t=1}^n \epsilon_t  ( y_t - y'_t) f(\x_t(y''_{1:t-1})) - 2B(f; \x)\right\}\right] \\
& \le \multiminimax{\max_{b_t \in \{\pm 1\}} \underset{\epsilon_t}{\En} \max_{y_t} }_{t=1}^n \left[\sup_{f \in \F}\left\{ \sum_{t=1}^n 2 \epsilon_t  b_t f(\x_t(y_{1:t-1})) - 2B(f; \x) \right\}\right] ~.
\end{align*}
Since $b_t \epsilon_t$ has the same distribution as $\epsilon_t$ for any $b_t\in\{\pm1\}$, we write the above expression as
\begin{align*}
&\multiminimax{\underset{\epsilon_t}{\En} \max_{y_t} }_{t=1}^n \left[\sup_{f \in \F}\left\{ \sum_{t=1}^n 2 \epsilon_t  f(\x_t(y_{1:t-1})) - 2B(f; \x) \right\}\right] ~.
\end{align*}
To be consistent with the notation of predictable processes, we shift the numbering on $y$ by one:
\begin{align*}
	\multiminimax{\underset{\epsilon_t}{\En} \max_{y_{t+1}} }_{t=1}^n \left[\sup_{f \in \F}\left\{ \sum_{t=1}^n 2 \epsilon_t  f(\x_t(y_{2:t})) - 2B(f; \x) \right\}\right]
	& = \sup_{\y} \Es{\epsilon}{\sup_{f \in \F} \sum_{t=1}^n 2 \epsilon_t  f(\x_t(\y_{2:t}(\epsilon))) - 2B(f; \x) }\\
	&\leq \sup_{\y} \Es{\epsilon}{\sup_{f \in \F} \sum_{t=1}^n 2 \epsilon_t  f(\x_t(\y_{2:t}(\epsilon))) - 2B(f; \x\circ\y) }
\end{align*}
by \eqref{eq:rotation_decreases}. The last quantity is nonpositive by  \eqref{eq:per_tree_condition}.
\end{proof}

\begin{proof}[\textbf{Proof of Lemma~\ref{lem:azuma_hoeff_sup}}]
	The first two statements are immediate from the discussion preceding the Lemma. For the third statement, we have that
	\begin{align*}
	 P&\left(\sup_{f \in \F} \sum_{t=1}^n \epsilon_t f(\x_t) - 2B(f;\x_1,\ldots,\x_n) - \alpha \sum_{t=1}^n\sup_{f \in \F} f(\x_t)^2  >  u \right) \le P\left(\sum_{t=1}^n \pred_t \epsilon_t - \alpha \sum_{t=1}^n\sup_{f \in \F} f(\x_t)^2  >  u \right)
	 \end{align*}
	 and the latter probability is further upper bounded by
	 \begin{align*}
	 P\left(\sum_{t=1}^n \epsilon_t \pred_t  - \alpha \pred_t^2  >  u \right) &\le \inf_{\lambda > 0} \Es{\epsilon}{\exp\left\{ \lambda \sum_{t=1}^n \epsilon_t \pred_t  - \sum_{t=1}^n  \alpha \lambda \pred_t^2   - \lambda u\right\} } \\
	 & \le \inf_{\lambda > 0} \max_{\hat{y}_{1:n} \in [-1,1]^n}\exp\left\{ \sum_{t=1}^n \left( \lambda^2 \hat{y}_t^2/2  - \alpha \lambda \hat{y}_t^2\right)   - \lambda u\right\}  \le \exp\left\{ - 2\alpha u\right\}.
	\end{align*}
\end{proof}

\begin{lemma}
	\label{lem:balance_alpha}
 Suppose we have a collection of random variables $(X(g), Y(g))_{g\in\G}$, with $0\leq Y(g)\leq b$ almost surely for any $g\in\G$. Suppose for all $\alpha>0$, $c>0$, and some $0 \le a \le 1$, and $K\geq 0$ it holds that
 $$
 P\left(\sup_{g \in \G}\{ X(g) -  \alpha^{-a} K - \alpha Y(g)\} > u\right) \le \Gamma \exp\left\{ - c\alpha u\right\}~.
 $$
 Then 
 $$
 P\left( \sup_{g \in \G}\left\{ X(g) -   4K^\frac{1}{1+a} (Y(g)+1)^\frac{a}{a+1} - 4u\sqrt{Y(g) + 1}  \right\} > 0\right) \le  \log(b) \Gamma \exp\left\{ - c u^2 \right\}.
 $$ 
\end{lemma}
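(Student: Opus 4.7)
The plan is to use a \emph{peeling} (dyadic shell) argument over the range of the random variable $Y(g)+1 \in [1,b+1]$. I would introduce the shells
\[
\G_j = \{g \in \G : 2^{j-1} \le Y(g)+1 \le 2^j\}, \qquad j = 1, \ldots, J, \quad J = \lceil \log_2(b+1) \rceil,
\]
so that every $g$ sits in some $\G_j$ and on each shell the two target quantities $(Y(g)+1)^{a/(1+a)}$ and $\sqrt{Y(g)+1}$ are pinned down to within a factor of $\sqrt{2}$. I would then apply the hypothesis once per shell, with an $\alpha$ tuned to that shell, and finish with a union bound over the $J = O(\log b)$ shells.

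For each $j$ I would set
\[
\alpha_j = \max\!\left\{K^{1/(1+a)}\, 2^{-j/(1+a)},\ u\cdot 2^{-j/2}\right\}, \qquad u_j = u^2/\alpha_j,
\]
so that $c\alpha_j u_j = cu^2$ produces the intended tail exponent, and the hypothesis yields
\[
P\!\left(\sup_{g\in\G_j}\{X(g) - \alpha_j^{-a}K - \alpha_j Y(g)\} > u_j\right) \le \Gamma \exp(-cu^2).
\]
The first candidate in the max is the minimizer of the deterministic ``bias'' $\alpha \mapsto \alpha^{-a}K + \alpha\cdot 2^j$ on $\G_j$, and therefore produces the desired term $K^{1/(1+a)}(Y+1)^{a/(1+a)}$. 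The second candidate is the smallest $\alpha$ for which the deviation threshold $u_j = u^2/\alpha$ stays of order $u\sqrt{Y+1}$. Splitting into the ``$K$-dominated'' and ``$u$-dominated'' cases (depending on which candidate realizes the max) and using the associated side inequality in each case, a short computation in powers of $2$ gives, for every $g\in\G_j$,
\[
\alpha_j^{-a}K + \alpha_j Y(g) + u_j \le 4K^{1/(1+a)}(Y(g)+1)^{a/(1+a)} + 4u\sqrt{Y(g)+1}.
\]
Specifically, in the $K$-dominated case the side condition $K^{1/(1+a)} \ge u\cdot 2^{j(1-a)/(2(1+a))}$ forces $u_j \le u\cdot 2^{j/2}$, while in the $u$-dominated case the reverse side condition converts $\alpha_j^{-a}K$ into a multiple of $u\cdot 2^{j/2}$; in both cases $2^{j/2} \le \sqrt{2}\sqrt{Y(g)+1}$ on $\G_j$. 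Summing the per-shell probabilities over $j=1,\ldots,J$ picks up the factor $J \le \log(b)+O(1)$ in front of $\Gamma\exp(-cu^2)$.

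The main obstacle is the tension between two competing demands on $\alpha_j$: the bias $\alpha^{-a}K + \alpha Y$ is minimized at $\alpha \sim (K/Y)^{1/(1+a)}$, while the deviation term demands $\alpha u_j \ge u^2$, i.e., $\alpha \gtrsim u/\sqrt{Y+1}$; depending on the relative sizes of $K$, $u$, and $Y$, these two demands can point to very different $\alpha$'s. The fix is precisely to take the max of the two candidate choices and check the target inequality case-by-case; once this is arranged, the rest is routine algebra of dyadic exponents, and the $\log(b)$ factor is intrinsic to the peeling.
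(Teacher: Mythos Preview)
Your peeling argument is correct and the constants work out (in Case~2, using the side condition $u\cdot 2^{-j/2}\ge K^{1/(1+a)}2^{-j/(1+a)}$ one gets $\alpha_j^{-a}K \le K^{1/(1+a)}2^{ja/(1+a)}$ rather than only the $u$-term you mention, and either route lands safely under the factor~$4$). The route, however, is genuinely different from the paper's.

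The paper does \emph{not} peel over $Y(g)$; instead it discretizes the tuning parameter~$\alpha$ on a dyadic grid over two intervals (one interval containing the balancing choice $\alpha''=(K/(Y+1))^{1/(1+a)}$, the other containing $\alpha'=\sqrt{u/(Y+1)}$, for all admissible values of $Y$). At each grid point $\alpha_i$ the hypothesis is applied with threshold $t=u\alpha_i^{-1}+\alpha_i$, giving $P(\sup_g\{X(g)-\alpha_i^{-a}K-\alpha_i(Y(g)+1)\}>u\alpha_i^{-1})\le \Gamma\exp\{-cu-c\alpha_i^2\}$. A union bound over the grid, followed by picking for each $g$ the near-optimal~$\alpha_i$ from the grid, recovers the same $\log(b)$ prefactor.

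The two approaches are dual: you partition the \emph{class} by the value of $Y(g)$ and commit to one $\alpha$ per shell, while the paper partitions the \emph{parameter range} of $\alpha$ and lets each $g$ select its own grid point afterward. Your version is the standard peeling device from empirical process theory and is arguably cleaner to execute; the paper's version avoids restricting to sub-classes $\G_j$ (the hypothesis is only stated for the full $\G$, which your argument handles by the trivial inclusion $\sup_{\G_j}\le\sup_{\G}$, so there is no real obstacle). Either way the $\log(b)$ factor arises from the same source---a dyadic union bound over a range governed by $1\le Y(g)+1\le b+1$.
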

\begin{proof}[\textbf{Proof of Lemma~\ref{lem:balance_alpha}}]
Fix $u>0$ and consider a discretization over two regions $[d'_\ell, d'_u]$, $[d''_\ell, d''_u]$, given by $\alpha_i=  d'_\ell 2^{i-1}$, $i\in 1,\ldots, N'=\lceil\log (d'_u/d'_\ell) \rceil$ and $\alpha_j=  d''_\ell 2^{j-1}$, $j\in 1,\ldots, N''=\lceil\log (d''_u/d''_\ell) \rceil$. Let $N=N'+N''$ be the total cardinality of the discretization, and let $I$ denote the discretized set. From our premise, we have that for every index $i$ and $t>0$,
 $$
 P\left(\sup_{g \in \G}\{ X(g) -  \alpha_i^{-a} K - \alpha_i Y(g)\} > t\right) \le \exp\left\{ - c\alpha_i t \right\}~.
 $$
 Substituting $t=u \alpha_i^{-1} + \alpha_i$,
 $$
 P\left(\sup_{g \in \G}\{X(g)  -  \alpha_i^{-a} K - \alpha_i Y(g)\} - \alpha_i > u \alpha_i^{-1} \right) \le \exp\left\{ - cu - c\alpha_i^2 \right\}~.
 $$
 By union bound we conclude that,
\begin{align*}
P\left( \max_{\alpha_i\in I}\sup_{g \in \G} X(g)  -  \alpha_i^{-a} K  - \alpha_i Y(g) - \alpha_i - u \alpha_i^{-1} >0\right) \leq \sum_{\alpha_i\in I}   \exp\left\{ - cu -c\alpha_i^2\right\} \le N \times \exp\left\{- cu\right\}  ~.
\end{align*}
Therefore,
\begin{align*}
P\left( \sup_{g \in \G, \alpha}\{ X(g) - 2 \alpha (Y(g)+1)  -  \alpha^{-a} K  - u \alpha^{-1}\} >0\right) &  \le N \times \exp\left\{- cu\right\} 
\end{align*}
with $\alpha$ taking values in $[d'_\ell,d'_u]\cup[d''_\ell, d''_u]$. However,
\begin{align*}
	\inf_{\alpha} \left\{ 2 \alpha (Y(g)+1)  +  \alpha^{-a} K  + u \alpha^{-1} \right\} \leq \inf_{\alpha} \left\{ 2 \alpha (Y(g)+1)  +  2\max\{\alpha^{-a} K, u \alpha^{-1}\} \right\}.
\end{align*}
Passing to two balancing choices 
$$\alpha'=\sqrt{\frac{u}{Y(g)+1}},~~~~ \alpha'' = \left( \frac{K}{Y(g)+1}\right)^{1/(a+1)}$$ 
we obtain
$$
P\left( \sup_{g \in \G}\left\{ X(g) - 4 \max\left\{ \sqrt{(Y(g) + 1) u},  K^{1/(a+1)} (Y(g)+1)^{a/(a+1)}   \right\} \right\} > 0\right) \le  N \exp\left\{ - cu\right\} ~.
$$ 
It remains to quantify $N$ such that, for any $g\in\G$, the choices of $\alpha',\alpha''$ are captured by the two corresponding regions of discretization. It is immediate that $N$ does not depend on $u$ or $K$, and depends logarithmically on $b$. This concludes the proof.
\end{proof}

In the proofs, it is useful to work with an equivalent to \eqref{eq:def_r_growth} growth assumption \eqref{eq:assm_growth}, defined below.
\begin{lemma}
	\label{lem:assm_from_growth}
	Suppose sequential Rademacher complexity exhibits an $n^{1/r}$ growth with constant $D/2>0$, in the sense of \eqref{eq:def_r_growth}. Then the following holds for any $\{0,1\}$-valued predictable process $\bb$ and any $\X$-valued predictable process $\x$ (both with respect to the dyadic filtration):
	\begin{align}
		\label{eq:assm_growth}
		\Es{\epsilon}{ \sup_{f \in \F} \sum_{t=1}^n \epsilon_t \bb_t f(\x_t)} \le D \left(\max_{\epsilon}\sum_t \bb_t\right)^{1/r} \left(\sup_{f \in \F, t\in[n], \epsilon\in\{\pm1\}^n} |\bb_t f(\x_t)|\right).
	\end{align}
\end{lemma}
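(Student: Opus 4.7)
The plan is to reduce to the growth assumption \eqref{eq:def_r_growth} by constructing a ``compressed'' predictable process $\tilde{\x}^{\ast}$ of depth $N_{\max}:=\max_{\epsilon}\sum_t \bb_t(\epsilon)$, obtained from $(\x,\bb)$ by skipping over the positions where $\bb_t=0$ and using a single judicious default sign pattern in place of the skipped signs. Since such a $\tilde{\x}^{\ast}$ will take values only in ``active'' nodes $\{\x_t(\epsilon):\bb_t(\epsilon)=1\}$, its envelope is automatically bounded by $\sup_{f,t,\epsilon}|\bb_t f(\x_t)|$, and the growth assumption applied at depth $N_{\max}$ will then give the desired $N_{\max}^{1/r}$ rate with a factor of $D/2$ to spare against the factor $D$ in the target.

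First I would define, for every potential default $\delta\in\{\pm1\}^n$, a candidate process $\tilde{\x}^{\delta}$ of depth $N_{\max}$ as follows. Given $\tilde{\epsilon}_{1:s-1}$, march through the original tree starting from the root, maintaining a counter $c$; at each step $t$, evaluate $\bb_t$ on the partial old-tree path built so far; if $\bb_t=0$ extend the path by $\delta_t$; if $\bb_t=1$ and $c<s-1$ extend by $\tilde{\epsilon}_{c+1}$ and increment $c$; if $\bb_t=1$ and $c=s-1$, set $\tilde{\x}^{\delta}_s:=\x_t$ on the current path and stop. If the walk reaches $t=n$ without emitting a value, set $\tilde{\x}^{\delta}_s:=\bar{\x}$ for a fixed active node $\bar{\x}$. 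By construction $\tilde{\x}^{\delta}_s$ depends only on $\tilde{\epsilon}_{1:s-1}$, so it is a bona fide predictable process with respect to the dyadic filtration, and it takes values in active nodes.

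The crux is then to establish the averaged inequality
$$\Es{\delta}{\Es{\tilde{\epsilon}}{\sup_f\sum_{s=1}^{N_{\max}}\tilde{\epsilon}_s f(\tilde{\x}^{\delta}_s(\tilde{\epsilon}))}}\ \ge\ \Es{\epsilon}{\sup_f\sum_{t=1}^n\epsilon_t\bb_t(\epsilon)f(\x_t(\epsilon))},$$
where $\delta$ is uniform on $\{\pm1\}^n$ and independent of $\tilde{\epsilon}$. The key observation is that the walk furnishes a map $(\delta,\tilde{\epsilon})\mapsto\epsilon$ that is $2^{N_{\max}}$-to-one onto the uniform distribution on $\{\pm1\}^n$: each $\epsilon$ has exactly $2^{N(\epsilon)}$ free $\delta$-coordinates at active positions and $2^{N_{\max}-N(\epsilon)}$ free $\tilde{\epsilon}$-coordinates beyond index $N(\epsilon)$. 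Along each preimage the compressed sum decomposes as $\sum_t\epsilon_t\bb_t f(\x_t)+f(\bar{\x})\sum_{s>N(\epsilon)}\tilde{\epsilon}_s$, and Jensen's inequality applied to $\sup_f$ with respect to the free coordinates $\tilde{\epsilon}_{>N(\epsilon)}$ kills the dummy tail (its conditional mean is zero) and leaves precisely the right-hand side above. A pigeonhole argument over $\delta$ then supplies a deterministic $\delta^{\ast}$ such that $\tilde{\x}^{\ast}:=\tilde{\x}^{\delta^{\ast}}$ already satisfies the desired inequality without the outer expectation; combining with the growth assumption applied to $\tilde{\x}^{\ast}$ at depth $N_{\max}$ yields the target bound by $(D/2)N_{\max}^{1/r}\sup|\bb_t f(\x_t)|\le DN_{\max}^{1/r}\sup|\bb_t f(\x_t)|$.

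The main obstacle is the change-of-variables-plus-Jensen step in the crux: one must verify carefully that the mapping $(\delta,\tilde{\epsilon})\mapsto\epsilon$ has constant fiber size $2^{N_{\max}}$, that the compressed sum decomposes cleanly into the original sum plus a dummy tail of vanishing conditional mean, and that the dummy values of $\tilde{\x}^{\delta^{\ast}}_s$ (where the walk terminated early) lie in active nodes so the envelope bound is inherited from $(\x,\bb)$. Once these bookkeeping points are in place, the invocation of the growth assumption and the factor-of-two slack between $D/2$ and $D$ render the remainder routine.
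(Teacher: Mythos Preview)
Your proposal is correct and takes a genuinely different route from the paper's own proof. The paper argues via stopping times: it rewrites $\sum_t \epsilon_t \bb_t f(\x_t)=\sum_{i\le N}\epsilon_{\tau_i}f(\x_{\tau_i})$, observes this is a martingale with respect to the stopped filtration, and then invokes the sequential symmetrization theorem of \cite{RakSriTew14} as a black box to pass to a dyadic tree of depth $N$; the factor $2$ produced by that symmetrization is exactly what turns the assumed constant $D/2$ into $D$. The paper also sidesteps the non-uniform case $N(\epsilon)<N_{\max}$ with a parenthetical ``the argument can be modified appropriately.''

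Your construction is more hands-on: you build the compressed depth-$N_{\max}$ tree explicitly, averaging over the default sign pattern $\delta$, and use the $2^{N_{\max}}$-to-one change of variables together with Jensen's inequality on the dummy tail to compare expectations. This avoids the external symmetrization lemma entirely and, as you note, actually lands at $(D/2)\,N_{\max}^{1/r}\sup|\bb_t f(\x_t)|$ rather than $D\,N_{\max}^{1/r}\sup|\bb_t f(\x_t)|$, so your argument is sharper by a factor of two. You also deal with the variable-$N(\epsilon)$ case transparently via the dummy node $\bar{\x}$. Two small remarks: the pigeonhole step is unnecessary, since you can simply upper-bound $\En_{\delta}\En_{\tilde\epsilon}[\sup_f\cdots]$ by $\En_{\delta}\En_{\tilde\epsilon}|\sup_f\cdots|$ and apply the growth assumption inside the $\delta$-average; and for the envelope bound to go through you should note that $N_{\max}\ge 1$ may be assumed without loss (else both sides vanish), so an active node $\bar{\x}$ exists.
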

\begin{proof}[\textbf{Proof of Lemma~\ref{lem:assm_from_growth}}]
For any $f\in\F$,
\begin{align}
	\sum_{t=1}^n \epsilon_t \bb_t f(\x_t) = \sum_{i=1}^N \epsilon_{\tau_i} f(\x_{\tau_i})
\end{align}
where $N= \max_{\epsilon} \sum \bb_t$ and $\tau_i = \min\{s: \sum_{k=1}^s \bb_k \geq i\}$. For simplicity, assume $\sum \bb_t = N$ for all $\epsilon$ uniformly (the argument can be modified appropriately if not). Since $\bb_k$ is $\cA_{k-1}$-measurable, the event $\{\tau_i\leq t\}$ is $\cA_{t-1}$-measurable. Define $N$ random variables $\tilde{X}_i = \x_{\tau_i}$ and $\tilde{\epsilon}_i = \epsilon_{\tau_i}$, as well as the filtration $\tilde{\cA}_{i}=\cA_{\tau_i}$. We have that for any $f$ and $t$
$$\E\left[ \tilde{\epsilon}_i f(\tilde{X}_i)~\mid~ \tilde{\cA}_{i-1}\right] = 0$$
and therefore 
$\sum_{i=1}^N \tilde{\epsilon}_i f(\tilde{X}_i)$
is a sum of martingale differences, indexed by $f$. By the result of \cite{RakSriTew14}, for any process $\tilde{X}_1,\ldots,\tilde{X}_N$ with values in  $\text{img}(\x)$, 
$$\E\sup_{f\in\F} \sum_{i=1}^N \tilde{\epsilon}_i f(\tilde{X}_i) \leq 2 \sup_{\y,\x'} \E_\gamma \sup_{f\in\F} \sum_{i=1}^N \gamma_i \y_i f(\x'_i) = 2 \sup_{\x'} \E_\gamma \sup_{f\in\F} \sum_{i=1}^N \gamma_i f(\x'_i)$$
where $\y,\x'$ range, respectively, over $\{\pm1\}$-valued and $\text{img}(\x)$-valued trees. The last equality follows from the rotation lemma (see \cite{StatNotes2012}).
\end{proof}

\section{Proof of Lemma~\ref{lem:p_moment_lemma}}

\begin{lemma}
	\label{lem:uniform_in_eps}
	Let $\F\subseteq \reals^\X$ and $r\in(1,2]$. If \eqref{eq:def_r_growth} holds with constant $D/2$, then 
$$
 \Es{\epsilon}{ \sup_{f \in \F} \sum_{t=1}^n \epsilon_t f(\x_t)} \le C_{r,p}  \max_{\epsilon} \left( \sum_{t=1}^n \sup_{f \in \F} |f(\x_t)|^p \right)^{1/p}
$$
for any $1 \leq p <r$ and $C_{r,p} \deq D \left(1 - 2^{- (r - p)/rp}\right)^{-1}$.
\end{lemma}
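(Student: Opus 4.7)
The plan is a dyadic peeling argument on the magnitudes $g(\x_t) \deq \sup_{f \in \F} |f(\x_t)|$, combined with Lemma~\ref{lem:assm_from_growth}. Write $M$ for the right-hand side of the target inequality; the $\ell_p$ budget $\sum_t g(\x_t(\epsilon))^p \le M^p$ holds for every sign pattern $\epsilon$, so indices with large $g(\x_t)$ are geometrically rare while indices with small $g(\x_t)$ contribute negligible amplitude. Summing the two effects over a dyadic scale should give a convergent series, with the summability condition being precisely $p < r$.

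For each integer $k \ge 0$ I will introduce the predictable $\{0,1\}$-valued process
\[\bb_t^{(k)} \deq \mathbf{1}\!\left\{ g(\x_t) \in \left( 2^{-(k+1)/p} M,\, 2^{-k/p} M \right] \right\},\]
with predictability inherited from $\x_t$. Uniformly in $\epsilon$, the amplitude bound $|\bb_t^{(k)} f(\x_t)| \le 2^{-k/p} M$ is immediate, and the $\ell_p$ budget forces $\sum_t \bb_t^{(k)} \le 2^{k+1}$. Since $g(\x_t) \le M$ always, the $\bb^{(k)}$ cover all indices with $g(\x_t) > 0$, and subadditivity of the supremum yields
\[\Es{\epsilon}{\sup_{f\in\F} \sum_t \epsilon_t f(\x_t)} \le \sum_{k \ge 0} \Es{\epsilon}{\sup_{f\in\F} \sum_t \epsilon_t \bb_t^{(k)} f(\x_t)}.\]
Applying Lemma~\ref{lem:assm_from_growth} to each summand bounds the $k$-th term by $D \cdot (2^{k+1})^{1/r} \cdot 2^{-k/p} M$, which reorganizes to $D M \cdot 2^{1/r} \cdot 2^{-k(r-p)/(rp)}$.

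The final step is to sum this geometric series, whose ratio is strictly less than $1$ exactly when $p<r$, and to identify the constant. A gross summation already yields a bound proportional to $D/(1 - 2^{-(r-p)/(rp)})$ up to a harmless factor of $2^{1/r} \le 2$; to land on the stated $C_{r,p}$ exactly, I would isolate the $k=0$ bucket separately (its count is at most $1$, since a single index with $g(\x_t) = M$ saturates the $\ell_p$ budget on its own) and then run the geometric series from $k \ge 1$ with shifted bin endpoints giving the sharper count $\sum_t \bb_t^{(k)} \le 2^k$.

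The main obstacle I anticipate is purely bookkeeping: the peeling automatically produces the correct functional form in $(r-p)/(rp)$, and the real work lies in choosing bin endpoints so that all multiplicative constants collapse into $C_{r,p}$. No chaining, symmetrization, or stopping-time argument is required---this is a single-scale peel in which Lemma~\ref{lem:assm_from_growth} supplies the block bound, and the exponent $(r-p)/(rp)$ emerges naturally from balancing the count exponent $1/r$ against the amplitude exponent $1/p$.
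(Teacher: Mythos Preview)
Your dyadic peeling argument is essentially identical to the paper's proof: same bins $\bb^{(k)}$, same count bound $\max_\epsilon |N_k(\epsilon)| \le 2^{k+1}$ from the $\ell_p$ budget, and the same blockwise application of the growth assumption (Lemma~\ref{lem:assm_from_growth}) followed by a geometric sum in $2^{-k(r-p)/(rp)}$. Your worry about the stray factor $2^{1/r}$ is misplaced: the paper's own computation also terminates at $2DA/(1-2^{-(r-p)/(rp)}) = 2C_{r,p}\,A$ rather than $C_{r,p}\,A$, so the stated constant is already loose by a factor of two and your proposed $k=0$ refinement is unnecessary (and the ``shifted bin endpoints'' idea would not actually deliver $|N_k|\le 2^k$ for $k\ge 1$).
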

\begin{proof}[\textbf{Proof of Lemma~\ref{lem:uniform_in_eps}}]
	
	Given a predictable process $\x$, define for each $k=0,1,\ldots,$ a predictable process $\bb^{(k)}$ by
	$$
	\bb^{(k)}_t = \left\{
	\begin{array}{ll}
	1 & ~\textrm{if }~~ 2^{-(k+1)/p} A  < \sup_{f \in \F} \left|f(\x_t)\right| \le 2^{-k/p} A\\
	0 & ~\textrm{otherwise} 
	\end{array}
	 \right.,
	$$
	where 
	$$A = \max_{\epsilon} \left( \sum_{t=1}^n \sup_{f \in \F} |f(\x_t)|^p \right)^{1/p}.$$
	Since $\x_t$ is $\cA_{t-1}$-measurable, so is $\bb^{(k)}_t$. From the definition, $\sum_{k\ge 0} \bb^{(k)}_t \equiv 1$. Hence
\begin{align*}
 \sup_{f \in \F} \sum_{t=1}^n \epsilon_t f(\x_t) \le \sum_{k\ge 0} \sup_{f \in \F} \sum_{t=1}^n \epsilon_t \bb_t^{(k)} f(\x_t).
\end{align*}
Denoting $N_k(\epsilon) = \{t : \bb^{(k)}_t =1 \}$,
\begin{align}
 \Es{\epsilon}{\sup_{f \in \F} \sum_{t=1}^n \epsilon_t f(\x_t)} & \le \sum_{k\ge 0}  \Es{\epsilon}{\sup_{f \in \F} \sum_{t=1}^n \epsilon_t \bb_t^{(k)} f(\x_t)} \notag\\ 
 & \le D \sum_{k\ge 0}  \left(\max_{\epsilon} \left|N_k(\epsilon)\right|\right)^{1/r}  \sup_{f,\epsilon , t} \left\{ |\bb_t^{(k)} f(\x_t)| \right\}\notag\\
 & \le D  A \sum_{k\ge 0}  \left(\max_{\epsilon} |N_k(\epsilon)|\right)^{1/r}   2^{-k/p}. \label{eq:nxt}
\end{align}
On the other hand note that for any $k=0,1,\ldots$, it holds that
\begin{align*}
 A &= \max_{\epsilon} \left( \sum_{t=1}^n \sup_{f \in \F} |f(\x_t)|^p \right)^{1/p}  \ge \max_{\epsilon} \left( \sum_{t=1}^n \sup_{f \in \F} |\bb_t^{(k)} f(\x_t)|^p \right)^{1/p} \\ 
 & \ge  \max_{\epsilon} \left( \left|N_k(\epsilon)\right|  \min_{t\in  N_k(\epsilon)}\sup_{f \in \F} |f(\x_t)|^p \right)^{1/p}   \\
 & \ge \left(\max_{\epsilon} \left|N_k(\epsilon)\right|\right)^{1/p}  2^{-(k+1)/p} A ~.
\end{align*}
Hence, $\max_{\epsilon} \left|N_k(\epsilon)\right| \le 2^{k+1}$. Using this in Eq. \eqref{eq:nxt} we conclude that:
\begin{align*}
 \Es{\epsilon}{\sup_{f \in \F} \sum_{t=1}^n \epsilon_t f(\x_t)} &  \le D  A \sum_{k\ge 0}  2^{(k+1)/r-k/p}  \le 2 D  A \sum_{k\ge 0}  2^{k/r-k/p} \le 2 D  A \sum_{k\ge 0}  2^{-k (r - p)/rp}\\
 & \le \frac{2 D  A}{1 - 2^{- (r - p)/rp}}  = \frac{2 D}{1 - 2^{- (r - p)/rp}}  \max_{\epsilon} \left( \sum_{t=1}^n \sup_{f \in \F} |f(\x_t)|^p \right)^{1/p} ~.
\end{align*}

\end{proof}

\begin{corollary}
	\label{cor:high_prob_unif_eps}
	Let $\F\subseteq \reals^\X$ and $r\in (1,2]$. If \eqref{eq:def_r_growth} holds with constant $D/4$, then 
\begin{align}
	\label{eq:high_prob_unif_eps}
\mathbb{P}\left( \left| \sup_{f \in \F} \sum_{t=1}^n \epsilon_t f(\x_t) \right| > 2C_{r,p}  \max_{\epsilon} \left( \sum_{t=1}^n \sup_{f \in \F} |f(\x_t)|^p \right)^{1/p} + u\right) \le \exp\left\{-\frac{u^2}{4 \max_{\epsilon} \sum_{t=1}^n\sup_{f \in \F} f(\x_t(\epsilon))^2} \right\}.
\end{align}
for any $1 \leq p <r$ and $C_{r,p} \deq D \left(1 - 2^{- (r - p)/rp}\right)^{-1}$.
\end{corollary}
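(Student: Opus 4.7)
The plan is to lift the in-expectation bound of Lemma~\ref{lem:uniform_in_eps} to a high-probability tail bound by amplifying it into a deterministic regret inequality (as in Section~\ref{sec:amplification}) and then invoking Azuma-Hoeffding. The factor $D/4$ in the hypothesis, versus the $D/2$ of Lemma~\ref{lem:uniform_in_eps}, is there to absorb a symmetrization step required to handle the outer absolute value.

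\emph{Step 1 (symmetrization).} Consider $\tilde\F \deq \F \cup (-\F)$. A direct case analysis gives
$$\sup_{\tilde f \in \tilde\F}\sum_{t=1}^n \epsilon_t \tilde f(\x_t) \;=\; \max\!\Bigl(\sup_{f\in\F}\sum_{t=1}^n \epsilon_t f(\x_t),\ \sup_{f\in\F} \sum_{t=1}^n (-\epsilon_t) f(\x_t)\Bigr) \;\ge\; \Bigl|\sup_{f\in\F}\sum_{t=1}^n \epsilon_t f(\x_t)\Bigr|,$$
while $\sup_{\tilde f\in\tilde\F}|\tilde f(\x_t)| = \sup_{f\in\F} |f(\x_t)|$, so the quantity $A(\x) \deq \max_\epsilon\bigl(\sum_t\sup_f|f(\x_t)|^p\bigr)^{1/p}$ is the same for $\F$ and $\tilde\F$. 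Since at least one of the two suprema inside the maximum is non-negative, $\Rad_n(\tilde\F;\x) \le \Rad_n(\F;\x) + \Rad_n(-\F;\x)$, and both $\F$ and $-\F$ satisfy \eqref{eq:def_r_growth} with constant $D/4$ and the same envelope. Hence $\tilde\F$ satisfies \eqref{eq:def_r_growth} with constant $D/2$, and Lemma~\ref{lem:uniform_in_eps} applied to $\tilde\F$ yields
$$\E\,\sup_{\tilde f\in\tilde\F}\sum_{t=1}^n \epsilon_t \tilde f(\x_t) \;\le\; C_{r,p}\,A(\x).$$

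\emph{Step 2 (amplification to a pathwise inequality).} Set $B(f;\x) \deq C_{r,p} A(\x)$, constant in $f$. The per-tree existence Lemma~\ref{lem:per_tree_abs_regret} now applies: the rotation condition \eqref{eq:rotation_decreases}, $A(\x\circ\y) \le A(\x)$, holds because the rotated tree visits only a subset of the $\epsilon$-paths of $\x$ over which $A(\x)$ is maximized, and the in-expectation condition \eqref{eq:per_tree_condition} reduces to $C_{r,p}A - 2C_{r,p}A \le 0$ by Step~1. Consequently, there exists a strategy $(\pred_t)$ with $|\pred_t| \le \sup_f |f(\x_t)|$ such that for every $y \in \{\pm 1\}^n$,
$$\sup_{\tilde f\in\tilde\F}\Bigl\{\sum_{t=1}^n y_t \tilde f\bigl(\x_t(y_{1:t-1})\bigr) - 2C_{r,p}A(\x)\Bigr\} \;\le\; \sum_{t=1}^n y_t \pred_t.$$
Substituting the random signs $y=\epsilon$ and using the domination from Step~1 gives the pathwise inequality
$$\Bigl|\sup_{f\in\F}\sum_{t=1}^n \epsilon_t f(\x_t)\Bigr| - 2C_{r,p}A(\x) \;\le\; \sum_{t=1}^n \epsilon_t \pred_t.$$

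\emph{Step 3 (conclude via Azuma-Hoeffding).} Because $(\epsilon_t \pred_t)$ is a bounded martingale difference sequence with $|\epsilon_t \pred_t|\le \sup_f|f(\x_t)|$, the Azuma-Hoeffding bound~\eqref{eq:azuma_hoeff_sup} of Lemma~\ref{lem:azuma_hoeff_sup} yields
$$P\Bigl(\sum_{t=1}^n \epsilon_t \pred_t > u\Bigr) \;\le\; \exp\!\Bigl(-\frac{u^2}{4\max_{\epsilon}\sum_{t=1}^n \sup_{f\in\F} f(\x_t(\epsilon))^2}\Bigr),$$
and combining this with the pathwise inequality from Step~2 proves \eqref{eq:high_prob_unif_eps}. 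The main technical point is Step~1---verifying that the passage from $\F$ to $\tilde\F$ inflates the growth constant by exactly a factor of two and leaves the data-dependent term $A(\x)$ unchanged; the remainder is a direct assembly of tools already established in the paper.
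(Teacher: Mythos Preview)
Your proof is correct and follows essentially the same route as the paper's own argument: pass to the symmetrized class $\tilde\F=\F\cup(-\F)$ to handle the absolute value (doubling the growth constant from $D/4$ to $D/2$), invoke Lemma~\ref{lem:uniform_in_eps} on $\tilde\F$, verify the rotation condition~\eqref{eq:rotation_decreases} for the data-dependent $B$, and then amplify via Lemma~\ref{lem:per_tree_abs_regret} and Lemma~\ref{lem:azuma_hoeff_sup}. Your write-up even supplies slightly more detail than the paper on why the growth constant merely doubles under symmetrization.
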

\begin{proof}[\textbf{Proof of Corollary~\ref{cor:high_prob_unif_eps}}]
	First, if \eqref{eq:def_r_growth} holds for $\F$ with a constant $D/4$, then it holds for the class $\F\cup -\F$ with a constant $D/2$.
	It is immediate that 
	$$B(\x) = C_{r,p}  \max_{\epsilon} \left( \sum_{t=1}^n \sup_{f \in \F} |f(\x_t)|^p \right)^{1/p}$$
	satisfies \eqref{eq:rotation_decreases}. We now apply  Lemma~\ref{lem:uniform_in_eps} to the class $\F\cup-\F$, which yields that  \eqref{eq:per_tree_condition} of Lemma~\ref{lem:per_tree_abs_regret} is satisfied  for $\F\cup-\F$:
	\begin{align}
		\En  \left[ \sup_{f\in\F\cup-\F} \sum_{t=1}^n \epsilon_t f(\x_t) - 2 B(f; \x) \right] \leq 0.
	\end{align}	
The amplification argument of Lemma~\ref{lem:azuma_hoeff_sup} yields the tail bound for the supremum over $\F\cup-\F$, and the statement is concluded by noting that, pointwise,
$$\left| \sup_{f\in\F} \sum_{t=1}^n \epsilon_t f(\x_t)\right| \leq \sup_{f\in\F} \left|\sum_{t=1}^n \epsilon_t f(\x_t)\right| = \sup_{f\in\F\cup-\F} \sum_{t=1}^n \epsilon_t f(\x_t).$$
\end{proof}

\begin{proof}[\textbf{Proof of Lemma~\ref{lem:p_moment_lemma}, Part 2}]
	
	Given a predictable process $\x$ (with respect to the dyadic filtration), define  a predictable process $\bb$ by
	$$
	\bb_t = \left\{
	\begin{array}{ll}
	1 & \left( \sum_{s=1}^t \sup_{f \in \F} |f(\x_s)|^p \right)^{1/p} \leq a \\
	0 & ~\textrm{otherwise} 
	\end{array}
	 \right.,
	$$
	for some $a>0$, to be specified later. By definition, 
	\begin{align}
		\label{eq:control_a}
		\max_{\epsilon\in\{\pm1\}^n} \left( \sum_{t=1}^n \sup_{f \in \F} |\bb_t f(\x_t)|^p \right)^{1/p} \leq a .
	\end{align}
	Define the event
$
\mathcal{E} = \left\{ \epsilon : \left(\sum_{t=1}^{n} \sup_{f \in \F} |f(\x_t(\epsilon))|^p \right)^{1/p} > a \right\}
$
and note that for any $u>0$,
\begin{align}
\label{eq:truncation1}
 P\left(  \left|\sup_{f \in \F} \sum_{t=1}^n \epsilon_t f(\x_t) \right| > u\right) &\le P(\mathcal{E}) + P\left(\mathcal{E}^c,~ \left|\sup_{f \in \F} \sum_{t=1}^n \epsilon_t f(\x_t)\right| > u\right).
\end{align}
Consider the second term above:
\begin{align}
	\label{eq:truncation2}
	P\left(\mathcal{E}^c,~ \left|\sup_{f \in \F} \sum_{t=1}^n \epsilon_t f(\x_t)\right| > u\right) &= P\left(\mathcal{E}^c,~ \left|\sup_{f \in \F} \sum_{t=1}^n \epsilon_t \bb_t f(\x_t)\right| > u\right)\leq P\left(\left|\sup_{f \in \F} \sum_{t=1}^n \epsilon_t \bb_t f(\x_t)\right| > u\right).
\end{align}
Let $C_{r,p}= \frac{D}{1 - 2^{- (r - p)/rp}}$ and chose $a= \frac{u}{4C_{r,p}}$.  Given the choice of $a$, the last quantity can be written as
\begin{align}
	\label{eq:intermed1}
  P\left( \left| \sup_{f \in \F} \sum_{t=1}^{n} \epsilon_t \bb_t f(\x_t)\right| - \frac{2D}{1 - 2^{- (r - p)/rp}} a > u/2\right).
\end{align}
We would like to the tail bound of \eqref{eq:high_prob_unif_eps}. Observe that  functions in $\F$ only appear in \eqref{eq:high_prob_unif_eps} through their values on the predictable process $\x$. Hence, we may apply \eqref{eq:high_prob_unif_eps} to the collection $\{(\bb_t f(\x_t))_{t=1}^n: f\in\F\}$. In view of \eqref{eq:control_a}, the tail bound on \eqref{eq:intermed1} is
\begin{align}
	\label{eq:intermed2}
 \exp\left\{ -  \frac{u^2}{16 \max_{\epsilon} \sum_{t=1}^n \sup_{f \in \F} \bb_t f^2(\x_t(\epsilon))}\right\} .
\end{align}
Now note that for any $p \le 2$ and $\F\subseteq[-1,1]^\X$,
\begin{align*}
 \max_{\epsilon} \left(\sum_{t=1}^n \sup_{f \in \F} \bb_t f^2(\x_t)\right)^{1/2} &\le \min\left\{ \max_{\epsilon} \left(\sum_{t=1}^n \sup_{f \in \F} |\bb_t f(\x_t)|^p\right)^{1/p}, \max_{\epsilon} \left(\sum_{t=1}^n \sup_{f \in \F} f^2(\x_t)\right)^{1/2} \right\} \\
 & \leq \min\left\{a, \max_{\epsilon} \left(\sum_{t=1}^n \sup_{f \in \F} f^2(\x_t)\right)^{1/2}\right\}.
 \end{align*} 
Then the tail bound in \eqref{eq:intermed2} is
\begin{align}
	\label{eq:intermed3}
 \exp\left\{ -  \frac{u^2}{16 \min\{a^2, \max_{\epsilon} \left(\sum_{t=1}^n \sup_{f \in \F} f^2(\x_t)\right)\}}\right\} 
 &=  \min\left\{\exp\left\{ -  C_{r,p}^2 \right\} , \exp\left\{ -  \frac{u^2}{16 \max_{\epsilon} \left(\sum_{t=1}^n \sup_{f \in \F} f^2(\x_t)\right)}\right\} \right\} .
\end{align}
Combining with \eqref{eq:truncation1}, 
\begin{align*}
 P\left(  \left| \sup_{f \in \F} \sum_{t=1}^n \epsilon_t f(\x_t)\right| > u\right) &\le P\left(\left(\sum_{t=1}^{n} \sup_{f \in \F} |f(\x_t(\epsilon))|^p \right)^{1/p}  > \frac{u}{4 C_{r,p}}\right)  \\
 &+ \min\left\{\exp\left\{ -  C_{r,p}^2 \right\} , \exp\left\{ -  \frac{u^2}{16 \max_{\epsilon} \left(\sum_{t=1}^n \sup_{f \in \F} f^2(\x_t)\right)}\right\} \right\} .
\end{align*}
Now we have,
\begin{align}
	\label{eq:integrating_tail}
	\En \left|\sup_{f \in \F} \sum_{t=1}^n \epsilon_t f(\x_t)\right| & \le \int_{0}^{\infty} P\left(\sup_{f \in \F} \left|\sum_{t=1}^n \epsilon_t f(\x_t) \right|> u \right) du .
\end{align}
The integral is then controlled above by
\begin{align}
	\label{eq:intermed4}
	& 4 C_{r,p} \int_{0}^\infty P\left(\left(\sum_{t=1}^{n} \sup_{f \in \F} |f(\x_t(\epsilon))|^p \right)^{1/p}  > x\right)   dx + \int_{0}^\infty \min\left\{\exp\left\{ -  C_{r,p}^2 \right\} , \exp\left\{ -  \tfrac{u^2}{16 \max_{\epsilon} \left(\sum_{t=1}^n \sup_{f \in \F} f^2(\x_t)\right)}\right\} \right\}  du\\
  & = 4C_{r,p} \Es{\epsilon}{\left(\sum_{t=1}^{n} \sup_{f \in \F} |f(\x_t(\epsilon))|^p \right)^{1/p}} + \int_{0}^{4 C_{r,p}\max_{\epsilon} \left(\sum_{t=1}^n \sup_{f \in \F} f^2(\x_t)\right)^{1/2}} e^{ -  C_{r,p}^2 } du\\
  & ~~~~~~~~~ + \int_{4 C_{r,p}\max_{\epsilon} \left(\sum_{t=1}^n \sup_{f \in \F} f^2(\x_t)\right)^{1/2}}^\infty \exp\left\{ -  \frac{u^2}{16 \max_{\epsilon} \left(\sum_{t=1}^n \sup_{f \in \F} f^2(\x_t)\right)}\right\}  du \notag\\
  &\leq 4C_{r,p}   \Es{\epsilon}{\left(\sum_{t=1}^{n}\sup_{f \in \F} |f(\x_t)|^p \right)^{1/p}}   + 8 C_{r,p}\max_{\epsilon} \left(\sum_{t=1}^n \sup_{f \in \F} f^2(\x_t)\right)^{1/2} \exp\left\{ -  C_{r,p}^2 \right\}.
\end{align}
Trivially, $x/2 \leq 2^x-1 \leq x$ for $x\in[0,1]$. Hence, we can upper and lower bound $C_{r,p}$ as
\begin{align}
	\frac{D r p 2^{\frac{r-p}{rp}}}{ r - p} \leq C_{r,p}  \leq \frac{2 D r p 2^{\frac{r-p}{rp}}}{r-p} ~.
\end{align}
Since $1 < p < r \le 2$, we have
\begin{align}
	\frac{D}{ r - p} \leq C_{r,p}  \leq \frac{8 D}{r-p} ~.
\end{align}
Hence, we conclude that
\begin{align*}
	\En \left|\sup_{f \in \F} \sum_{t=1}^n \epsilon_t f(\x_t)\right| & \leq \frac{32 D}{r-p}   \Es{\epsilon}{\left(\sum_{t=1}^{n}\sup_{f \in \F} |f(\x_t)|^p \right)^{1/p}}   + \frac{64D}{r-p} \max_{\epsilon} \left(\sum_{t=1}^n \sup_{f \in \F} f^2(\x_t)\right)^{1/2} \exp\left\{ -  \frac{D^2}{(r-p)^2} \right\}\\
	& \le \frac{32 D}{r-p}   \Es{\epsilon}{\left(\sum_{t=1}^{n}\sup_{f \in \F} |f(\x_t)|^p \right)^{1/p}}   + \frac{64D}{r-p} \sqrt{n} \exp\left\{ -  \frac{D^2}{(r-p)^2} \right\}
\end{align*}
Now say we set $\frac{1}{p} = \frac{1}{r} + \frac{1}{\log n}$, then in this case, $\frac{4}{\log n} \ge r - p >\frac{1}{\log n}$ and,
\begin{align}
	\Es{\epsilon}{\left(\sum_{t=1}^{n}\sup_{f \in \F} |f(\x_t)|^p \right)^{1/p}} \leq n^{\frac{1}{p}-\frac{1}{r}}\Es{\epsilon}{\left(\sum_{t=1}^{n}\sup_{f \in \F} |f(\x_t)|^r \right)^{1/r}} \leq 2 \Es{\epsilon}{\left(\sum_{t=1}^{n}\sup_{f \in \F} |f(\x_t)|^r \right)^{1/r}}.
\end{align}
We conclude that
\begin{align*}
	\En \left|\sup_{f \in \F} \sum_{t=1}^n \epsilon_t f(\x_t)\right| & \leq  32 D\ \log n\   \Es{\epsilon}{\left(\sum_{t=1}^{n}\sup_{f \in \F} |f(\x_t)|^r \right)^{1/r}}   + \frac{64 D \sqrt{n} \log n}{n^{D^2 \log n}}~.
\end{align*}
\end{proof}

\begin{lemma}
	\label{lem:weak_control}
 	Let $\F\subseteq \reals^\X$ and $r\in(1,2]$. Under the growth assumption \eqref{eq:def_r_growth} with constant $D/2$, for any $p < r$, $u>0$, and $C_{r,p} \deq D \left(1 - 2^{- (r - p)/rp}\right)^{-1}$, 
\begin{align}
 P\left( \max_{s\leq n} \left| \sup_{f \in \F} \sum_{t=1}^s \epsilon_t f(\x_t)\right| > u\right)  \le 2 \left(\frac{C_{r,p}^{p} \Es{}{\sum_{t=1}^{n} \sup_{f \in \F} |f(\x_t)|^{p}} }{u^{p}}\right)^{\frac{1}{p+1}}. 
\end{align}
\end{lemma}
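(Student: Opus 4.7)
The strategy is the standard ``truncation plus Doob'' idea: cut off the variation at a threshold $a>0$, control the event of exceeding the threshold by Markov, and on its complement apply Doob's weak maximal inequality to a non-negative submartingale built from the truncated martingale. An optimization over $a$ then produces the stated exponent $1/(p+1)$ and the $u^{-p/(p+1)}$ decay.

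Concretely, set $V_t = \sum_{s \le t}\sup_{f \in \F}|f(\x_s)|^p$ and define the predictable indicator $\bb_t = \mathbf{1}\{V_t \le a^p\}$; this is $\cA_{t-1}$-measurable because $\x_t$ is. Let $\bar M_s^f = \sum_{t=1}^s \epsilon_t \bb_t f(\x_t)$. For each fixed $f$, $\bar M_s^f$ is a martingale in $s$, so $|\bar M_s^f|$ is a submartingale and $X_s := \sup_{f \in \F}|\bar M_s^f|$ is a non-negative submartingale. On $\{V_n \le a^p\}$ we have $\bb_t \equiv 1$, hence $\bar M_s^f = M_s^f := \sum_{t=1}^s \epsilon_t f(\x_t)$ for all $s, f$. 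Using $|\sup_f\cdot| \le \sup_f|\cdot|$, split
\[
P\!\left(\max_{s \le n}\left|\sup_{f \in \F} M_s^f\right| > u\right) \le P(V_n > a^p) + P\!\left(\max_{s \le n} X_s > u\right) \le \frac{\E V_n}{a^p} + \frac{\E X_n}{u},
\]
by Markov (first term) and Doob's weak maximal inequality for the non-negative submartingale $X$ (second term).

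To bound $\E X_n = \E \sup_{f \in \F}|\bar M_n^f| = \E\sup_{f' \in \F \cup (-\F)} \bar M_n^{f'}$, I would invoke Lemma~\ref{lem:uniform_in_eps} applied to the class $\F \cup (-\F)$, which inherits the $n^{1/r}$ growth rate with constant at most $D$ (a factor-of-$2$ loss from passing to the union). The predictable factor $\bb_t$ is absorbed using the general form \eqref{eq:assm_growth} of the growth assumption supplied by Lemma~\ref{lem:assm_from_growth}: the proof of Lemma~\ref{lem:uniform_in_eps} goes through verbatim with $\bb_t f$ in place of $f$. This yields
\[
\E X_n \le 2 C_{r,p}\, \max_\epsilon \Big(\sum_{t=1}^n \bb_t \sup_{f\in\F}|f(\x_t)|^p\Big)^{1/p} \le 2 C_{r,p}\, a
\]
by the definition of $\bb_t$. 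Substituting and minimizing $g(a) = \E V_n / a^p + 2 C_{r,p}\, a/u$ at $a^{p+1} = p\,u\,\E V_n/(2 C_{r,p})$ produces the target bound of order $\bigl(C_{r,p}^p \E V_n / u^p\bigr)^{1/(p+1)}$; the explicit constant emerging is $2^{p/(p+1)}(1+p)\,p^{-p/(p+1)}$, close to (but slightly larger than) the stated $2$, which can be recovered by a modest sharpening (for instance a one-sided argument that avoids doubling the class).

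The main technical point is the applicability of Lemma~\ref{lem:uniform_in_eps} to the truncated process; the rest is Markov, Doob, and a one-variable optimization. The truncation step is permissible precisely because the growth input \eqref{eq:assm_growth} from Lemma~\ref{lem:assm_from_growth} already accommodates a $\{0,1\}$-valued predictable prefactor without affecting the rate.
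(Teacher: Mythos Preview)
Your proposal is correct and follows essentially the same route as the paper: truncate via the predictable indicator $\bb_t=\ind{V_t\le a^p}$, split off the event $\{V_n>a^p\}$ by Markov, apply Doob's maximal inequality to the (sub)martingale built from the truncated process, bound its expectation via Lemma~\ref{lem:uniform_in_eps} with the uniform control $\max_\epsilon V_n^{1/p}\le a$, and balance $a$. The only cosmetic difference is that the paper applies Lemma~\ref{lem:uniform_in_eps} directly to $\bigl|\sup_{f\in\F}\sum_t\epsilon_t\bb_t f(\x_t)\bigr|$ (exploiting that the growth assumption \eqref{eq:def_r_growth} is stated with the absolute value already inside $\Rad_n$) and equalizes the two terms rather than minimizing, which is what yields the clean constant~$2$; your detour through $\F\cup(-\F)$ is unnecessary and is the sole source of your slightly larger constant.
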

\begin{proof}[\textbf{Proof of Lemma~\ref{lem:weak_control}}]
As in the proof of Lemma~\ref{lem:p_moment_lemma}, given a predictable process $\x$, define  a predictable process $\bb$ by
	$
	\bb_t = \ind{ \left( \sum_{s=1}^t \sup_{f \in \F} |f(\x_s)|^{p} \right)^{1/p} \leq a}$ 	for some $a>0$, to be specified later. 
	Define 
$
\mathcal{E} = \left\{ \epsilon : \left(\sum_{t=1}^{n} \sup_{f \in \F} |f(\x_t(\epsilon))|^{p} \right)^{1/p} > a \right\}.
$
For any $u>0$,
\begin{align}
	\label{eq:intermed1c}
 P\left(  \max_{s\leq n} \left| \sup_{f \in \F} \sum_{t=1}^s \epsilon_t f(\x_t)\right| > u\right) &\le P(\mathcal{E}) + P\left(\mathcal{E}^c,~ \max_{s\leq n}\left| \sup_{f \in \F} \sum_{t=1}^s \epsilon_t f(\x_t)\right| > u\right).
\end{align}
The second term can be written as  
\begin{align*}
		P\left(\mathcal{E}^c,~ \max_{s\leq n} \left| \sup_{f \in \F} \sum_{t=1}^s \epsilon_t f(\x_t) \right| > u \right) &= P\left(\mathcal{E}^c,~ \max_{s\leq n}\left| \sup_{f \in \F} \sum_{t=1}^s \epsilon_t \bb_t f(\x_t) \right| > u\right)\leq P\left(\max_{s\leq n}\left| \sup_{f \in \F} \sum_{t=1}^n \epsilon_t \bb_t f(\x_t)\right| > u\right)
\end{align*}
and further upper bounded, using Lemma~\ref{lem:uniform_in_eps}, by
\begin{align}
	\label{eq:intermed1d}
  P\left( \max_{s\leq n}\left| \sup_{f \in \F} \sum_{t=1}^{s} \epsilon_t \bb_t f(\x_t) \right| > u\right) \le \frac{\Es{}{\left|\sup_{f \in \F} \sum_{t=1}^{n} \epsilon_t \bb_t f(\x_t) \right|}}{u} \le \frac{C_{r,p} a}{u}.
\end{align}
The first inequality is implied by Doob's maximal inequality since 
$s\mapsto \sup_{f \in \F} \sum_{t=1}^{s} \epsilon_t \bb_t f(\x_t(\epsilon))$ is a discrete-time submartingale. Next,
$$P(\mathcal{E}) = P\left(\sum_{t=1}^{n} \sup_{f \in \F} |f(\x_t)|^{p}  > a^p\right) \leq \frac{\Es{}{\sum_{t=1}^{n} \sup_{f \in \F} |f(\x_t)|^{p}}}{a^p}.$$
Combining with \eqref{eq:intermed1c}, we have
\begin{align}
 P\left( \max_{s\leq n} \left| \sup_{f \in \F} \sum_{t=1}^s \epsilon_t f(\x_t) \right| > u\right) &\le  \frac{\Es{}{\sum_{t=1}^{n} \sup_{f \in \F} |f(\x_t)|^{p}}}{a^p} + \frac{C_{r,p} a}{u}. 
\end{align}
Setting $a = \left(\frac{u \left(\Es{}{\sum_{t=1}^{n} \sup_{f \in \F} |f(\x_t)|^{p}}\right)}{C_{r,p}}\right)^{1/(p+1)}$, yields the statement of the lemma.
\end{proof}

\newcommand{\btildex}{\tilde{{\mathbf{x}}}}

\begin{lemma}
	\label{lem:pmomentbound}
 	Let $\F\subseteq \reals^\X$ and $r\in(1,2]$. Under the growth assumption \eqref{eq:def_r_growth}, for any $p < r$ there exists a constant $B_{r,p}<\infty$ such that for any $u>0$,
	\begin{align}
	 P\left(  \left| \sup_{f \in \F} \sum_{t=1}^n \epsilon_t f(\x_t)\right| > u\right)  \le \frac{B_{r,p} \En\left[ \sum_{t=1}^{n} \sup_{f \in \F} |f(\x_t)|^{p} \right]}{u^p}.  
	\end{align}
\end{lemma}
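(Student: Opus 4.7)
The plan is to establish an $L^p$ moment bound $\En M_n^p \le B'_{r,p}\,\En V_n$, where $M_n = \sup_{f\in\F}\sum_t \epsilon_t f(\x_t)$ and $V_n = \sum_t\sup_{f\in\F}|f(\x_t)|^p$; the statement of the lemma then follows from Markov's inequality, $P(M_n>u)\le \En M_n^p/u^p$. To derive the moment bound, I will prove a Burkholder-style good-$\lambda$ inequality analogous to the one used in the theory of type-$p$ Banach spaces. First, replace $\F$ with $\F\cup(-\F)$, so that $M_n$ is a nonnegative submartingale; the growth constant at most doubles.

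The key step is: for an appropriately small $\beta>0$ and the resulting $\gamma=\gamma(\beta,r,p)$ described below,
\begin{align*}
P\bigl(M_n>2\lambda,\; V_n\le(\beta\lambda)^p\bigr) \;\le\; \gamma\cdot P(M_n^*>\lambda),\qquad\forall\lambda>0,
\end{align*}
where $M_n^*=\max_{t\le n}M_t$. To prove this, introduce the stopping time $\sigma=\inf\{t:M_t>\lambda\}$ and the predictable truncation $\bb_t=\ind{V_t\le(\beta\lambda)^p}$. On the event $\{M_n>2\lambda,\,V_n\le(\beta\lambda)^p\}$, each jump $|M_t-M_{t-1}|$ is at most $\sup_f|f(\x_t)|\le V_n^{1/p}\le\beta\lambda$, so $M_\sigma\le(1+\beta)\lambda$ and therefore $M_n-M_\sigma\ge(1-\beta)\lambda$. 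Dominate $M_n-M_\sigma$ by the post-$\sigma$ supremum $\widetilde M:=\sup_f\sum_{t>\sigma}\epsilon_t f(\x_t)$, which on the good event coincides with its truncation $\widetilde M^{\mrm{trunc}}:=\sup_f\sum_{t>\sigma}\epsilon_t\bb_t f(\x_t)$.

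Conditionally on $\mathcal{F}_\sigma$, the truncated post-$\sigma$ subtree satisfies $\max_\epsilon\bigl(\sum_{t>\sigma}\sup_f|\bb_t f(\x_t)|^p\bigr)^{1/p}\le\beta\lambda$ and, since each term is bounded by $\beta\lambda$ and $p\le 2$, also $\max_\epsilon\sum_{t>\sigma}\sup_f(\bb_t f(\x_t))^2\le(\beta\lambda)^2$ via $x^2\le x^p\cdot(\beta\lambda)^{2-p}$. Applying the conditional version of Corollary~\ref{cor:high_prob_unif_eps} then gives
\begin{align*}
P\bigl(\widetilde M^{\mrm{trunc}}>(1-\beta)\lambda\,\big|\,\mathcal{F}_\sigma\bigr) \;\le\; \exp\!\Bigl(-\tfrac{((1-\beta)-2C_{r,p}\beta)^2}{4\beta^2}\Bigr) \;=:\; \gamma,
\end{align*}
a constant independent of $\lambda$ and $\sigma$. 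Taking outer expectation yields the good-$\lambda$ inequality, since $P(\sigma\le n)=P(M_n^*>\lambda)$.

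To conclude, combining $P(M_n>2\lambda)\le P(V_n>(\beta\lambda)^p)+\gamma P(M_n^*>\lambda)$ with layer-cake integration against $p\lambda^{p-1}$ yields $2^{-p}\En M_n^p\le\beta^{-p}\En V_n+\gamma\En(M_n^*)^p$. Doob's $L^p$ maximal inequality applied to the nonnegative submartingale $M_n$ supplies $\En(M_n^*)^p\le(p/(p-1))^p\En M_n^p$; choosing $\beta$ small enough that $\gamma\,(p/(p-1))^p<2^{-p}$ permits rearrangement to $\En M_n^p\le B'_{r,p}\En V_n$, and Markov finishes the proof. The main obstacle is the conditional invocation of Corollary~\ref{cor:high_prob_unif_eps} in the post-$\sigma$ subtree: it requires the truncated variation to be uniformly bounded over \emph{all} future tree paths, not just the realized one, which the predictable truncation via the running $V_t$ secures. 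A secondary point is a preliminary truncation of $M_n$ itself (e.g., $M_n\wedge K$) to ensure $\En M_n^p<\infty$ before rearranging, with $K\to\infty$ at the end.
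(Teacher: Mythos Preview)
Your argument is correct and takes a genuinely different route from the paper's.

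The paper proceeds in two stages. First, a \emph{weak} tail bound is established (Lemma~\ref{lem:weak_control}): truncating via the same predictable $\bb_t$ you use, one gets $P(M_n^*>u)\le 2(C_{r,p}^p\,\En V_n/u^p)^{1/(p+1)}$. Second, the weak bound is upgraded to the strong $u^{-p}$ tail via Burkholder's \emph{reverse H\"older principle} (cited as Proposition~8.53 in Pisier): the paper concatenates $N$ independent copies of the tree $\x$ into a single tree of depth $nN$, applies the weak bound to the concatenation, and deduces $\sup_N P(N^{-1/p}\max_{j\le N}Z_j>u)\le c\,u^{-p/(p+1)}$, which is exactly the hypothesis of the reverse H\"older principle.

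Your approach instead runs a good-$\lambda$ argument directly, feeding in the exponential conditional tail from Corollary~\ref{cor:high_prob_unif_eps} on the post-$\sigma$ truncated subtree, then closes the loop with Doob's $L^p$ maximal inequality for the submartingale $M_t$. This is fully self-contained (no black-box reverse H\"older), and it delivers the slightly stronger intermediate statement $\En M_n^p\le B'_{r,p}\,\En V_n$. The paper's route is more modular---it isolates the upgrade step as an abstract principle and only needs the easier weak-type estimate as input---at the cost of the concatenation device and an external reference. Both arguments are, at root, descendants of the same Burkholder ideas; you have simply chosen the direct good-$\lambda$ incarnation rather than the i.i.d.-copies incarnation.

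Two minor remarks on your write-up. First, for a dyadic tree of fixed depth $n$, $M_n$ is bounded by $n\cdot\sup_{t,\epsilon,f}|f(\x_t)|$, so $\En M_n^p<\infty$ automatically and the preliminary truncation $M_n\wedge K$ is not needed (though harmless). Second, the conditional invocation of Corollary~\ref{cor:high_prob_unif_eps} is indeed legitimate for the reason you state: given $\mathcal{F}_\sigma$, the remaining $\epsilon_{\sigma+1},\ldots,\epsilon_n$ are still i.i.d.\ Rademacher, the post-$\sigma$ subtree is a predictable process with respect to them, and the corollary is stated for an \emph{arbitrary} tree, so the growth assumption carries over unchanged. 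The paper uses the identical observation in the proof of Lemma~\ref{lem:p_moment_lemma}, Part~2.
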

\begin{proof}[\textbf{Proof of Lemma~\ref{lem:pmomentbound}}]
	Given a predictable $\X$-valued process $\x=(\x_1,\ldots,\x_n)$ of length $n$, consider a predictable process $\btildex$ of length $n N$ constructed by concatenating $N$ copies of $\x$. Let $\epsilon^1, \ldots, \epsilon^N \in\{\pm1\}^n$ be $N$ independent vectors with i.i.d. Rademacher coordinates, and let $\epsilon=(\epsilon^1,\ldots,\epsilon^N)$. Denote $Z(\epsilon^j) \deq  \left| \sup_{f \in \F} \sum_{t=1}^n \epsilon^j_t f(\x_t(\epsilon^j)) \right|$.   
Now note that
\begin{align*}
\max_{j \leq N} Z(\epsilon^{j}) &= \max_{j \leq N} \left| \sup_{f \in \F} \left( \sum_{t=1}^{n j} \epsilon_t f(\btildex_t(\epsilon))-  \sum_{t=1}^{n (j-1)} \epsilon_t f(\btildex_t(\epsilon))  \right) \right| \\
&\le\max_{j \leq N}\left( \left|\sup_{f \in \F}\sum_{t=1}^{n j} \epsilon_t f(\btildex_t) \right| +  \left| \sup_{f \in \F} \sum_{t=1}^{n (j-1)} -\epsilon_t f(\btildex_t) \right| \right)\\
&\leq \max_{s \leq nN} \left| \sup_{f \in \F} \sum_{t=1}^{s} \epsilon_{t} f(\tilde{\x}_{t})\right| + \max_{s \leq nN} \left| \sup_{f \in \F} \sum_{t=1}^{s} -\epsilon_{t} f(\tilde{\x}_{t}) \right| ~.
\end{align*}
The two terms in the last bound have the same distribution, and so
\begin{align*}
P\left( N^{-1/p} \max_{j \in [N]} Z(\epsilon^{j}) > u \right) 
& \le 2 P\left( \max_{k \in [nN]} \left| \sup_{f \in \F} \sum_{t=1}^{k} \epsilon_{t}  f(\tilde{\x}_{t}) \right| > u N^{1/p}/2 \right)\\
& \le 4 \left(\frac{C_{r,p}^{p} \Es{\epsilon}{\sum_{t=1}^{n N} \sup_{f \in \F} |f(\tilde{\x}_t(\epsilon))|^{p}} }{N u^{p}}\right)^{\frac{1}{p+1}}.
\end{align*}
Since $\btildex$ is a concatenation of $N$ copies of $\x$, convexity of $\sup$ implies
$$
\Es{\epsilon}{ \sum_{t=1}^{n N} \sup_{f \in \F}  |f(\tilde{\x}_t)|^p } \leq N\ \Es{\epsilon}{  \sum_{t=1}^n \sup_{f \in \F} |f(\x_t)|^p }. 
$$
By homogeneity of the lemma statement, we may assume 
$\Es{\epsilon}{  \sum_{t=1}^n \sup_{f \in \F} |f(\x_t)|^p } = 1.$
With this scaling, we have proved
\begin{align*}
\sup_{N\geq 1} P\left( N^{-1/p} \max_{j \leq N} Z(\epsilon^{j}) > u \right) & \le 4 \left(\frac{C_{r,p}^{p} }{ u^{p}}\right)^{\frac{1}{p+1}}.
\end{align*}
Define $u_{r,p}$ to be the value of $u$ that makes the right-hand side equal to $1/2$. 
The reverse H\"older principle (see Proposition 8.53 in \cite{Pisier11}, originating in the work of D.L. Burkholder \cite{burkholder1964maximal}) implies that there exists constant $B_{r,p}<\infty$ such that
$$
P\left( \left| \sup_{f \in \F} \sum_{t=1}^n \epsilon_t f(\x_t)\right| > u\right)  \le B_{r,p} u^{-p}. 
$$
The statement follows by homogeneity.
\end{proof}

\begin{proof}[\textbf{Proof of Lemma~\ref{lem:p_moment_lemma}, Part 1}]
Define the predictable process $\bb$ as 
	$
	\bb_t = \ind{ \sum_{s=1}^t \sup_{f \in \F} |f(\x_s)|^{p}  \leq u^p }
	$
 and the event
$
\mathcal{E} = \left\{ \epsilon : \left(\sum_{t=1}^{n} \sup_{f \in \F} |f(\x_t(\epsilon))|^{p} \right)^{1/p} > u\right\}
$. 
As in the proof of the other part of the lemma,
\begin{align*}
 P\left( \left| \sup_{f \in \F} \sum_{t=1}^n \epsilon_t f(\x_t) \right| > u \right) \le P(\mathcal{E}) +  P\left(  \left| \sup_{f \in \F} \sum_{t=1}^n \epsilon_t \bb_t f(\x_t) \right| > u\right) ~. 
\end{align*}
Under the growth assumption \eqref{eq:def_r_growth}, applying Lemma~\ref{lem:pmomentbound}, there exists a finite constant $B_{r,p}$ such that
\begin{align*}
 P\left( \left| \sup_{f \in \F} \sum_{t=1}^n \epsilon_t \bb_t f(\x_t) \right| > u\right) &\le \frac{B_{r,p} \Es{}{\sum_{t=1}^{n} \sup_{f \in \F} \bb_t |f(\x_t)|^{p}}}{u^{p}} \\
 & \le \frac{B_{r,p} \Es{}{\min\left\{\sum_{t=1}^{n} \sup_{f \in \F}  |f(\x_t)|^{p}, u^{p}\right\}}}{u^{p}}. 
 \end{align*}
 The last inequality follows from the definition of $\bb_t$.
We then have
\begin{align*}
 \Es{}{\left| \sup_{f \in \F} \sum_{t=1}^n \epsilon_t f(\x_t)  \right|} &=  \int_0^\infty   P\left( \sup_{f \in \F} \sum_{t=1}^n \epsilon_t f(\x_t) > u \right) du\\
 &\le  \int_0^\infty  P\left( \mathcal{E} \right) du + B_{r,p} \int_0^\infty  u^{-p} \Es{\epsilon}{\min\left(\sum_{t=1}^{n}\sup_{f \in \F}  |f(\x_t)|^{p} , u^{p}\right)} du. 
 \end{align*}
 Let us focus on the second integral. Exchanging the order of integration and splitting the integral into two parts, 
 \begin{align*}
 	&\int_0^\infty  u^{-p} \Es{\epsilon}{\min\left(\sum_{t=1}^{n}\sup_{f \in \F}  |f(\x_t)|^{p} , u^{p}\right)} du \\
  	&=\Es{\epsilon}{\int_{0}^{\left(\sum_{t=1}^{n}\sup_{f \in \F}  |f(\x_t)|^{p}\right)^{1/p}}  1 du} + \Es{\epsilon}{\int_{\left(\sum_{t=1}^{n}\sup_{f \in \F}  |f(\x_t)|^{p}\right)^{1/p}}^\infty \left(\sum_{t=1}^{n}\sup_{f \in \F}  |f(\x_t)|^{p}\right) u^{-p}   du }  \\
 &= \Es{\epsilon}{\left(\sum_{t=1}^{n}\sup_{f \in \F}  |f(\x_t)|^{p}\right)^{\frac{1}{p}}}  + \frac{1}{p-1} \Es{\epsilon}{\left(\sum_{t=1}^{n}\sup_{f \in \F}  |f(\x_t)|^{p}\right)   \left(\sum_{t=1}^{n}\sup_{f \in \F}  |f(\x_t)|^{p}\right)^{(1-p)/p} }  \\
  &= \left(1 +  \frac{1}{p-1}\right) \Es{\epsilon}{\left(\sum_{t=1}^{n}\sup_{f \in \F}  |f(\x_t)|^{p}\right)^{1/p} } .
\end{align*}
By the definition of $\mathcal{E}$ we also have
$$
\int_{0}^\infty  P\left( \mathcal{E} \right) du  = \Es{}{\left(\sum_{t=1}^{n} \sup_{f \in \F} |f(\x_t)|^{p}\right)^{1/p} } .
$$
Combining,
\begin{align*}
 \Es{}{\left| \sup_{f \in \F} \sum_{t=1}^n \epsilon_t f(\x_t)  \right|} &\le \left( 1 + B_{r,p}\left(1 +  \frac{1}{p-1}\right)\right) \En \left[ \left(\sum_{t=1}^{n}\sup_{f \in \F}  |f(\x_t)|^{p}\right)^{1/p} \right].
\end{align*}

\end{proof}

\section{Proofs of Theorem~\ref{thm:unif} and Theorem~\ref{thm:perfunction} }

\begin{proof}[\textbf{Proof of Theorem~\ref{thm:unif}}]
	We first prove the lemma for a class $\G\subseteq [-1/2,1/2]^\cZ$ and then rescale the final result, using the homogeneity of the bound. For the derived class $\F=\{f_g(z,z')=g(z)-g(z'): g\in\G\}\subseteq [-1,1]^\cX$, where $\X=\cZ\times\cZ$, \eqref{eq:def_r_growth} holds with a constant $D/2$. Invoking \eqref{eq:exp_tail_sup} of Lemma~\ref{lem:azuma_hoeff_sup}, for any $\X$-valued predictable process $\x$,
\begin{align*}
 & P\left(\sup_{f \in \F} \sum_{t=1}^n \epsilon_t f(\x_t) - B(f; \x_1,\ldots,\x_n) >  u \right) \le \exp\left( - 2\alpha u\right)
\end{align*}
with 
\begin{align*}
	B(f; \x_1,\ldots,\x_n) = 32 D \log_2(n)  \left(\sum_{t=1}^{n}\sup_{f \in \F} |f(\x_t)|^r \right)^{1/r}  - \alpha \sum_{t=1}^n\sup_{f \in \F} f(\x_t)^2 - \phi_n
\end{align*} 
and $\phi_n = \frac{64 D \sqrt{n} \log n}{n^{D^2 \log n}}$.
	Define 
	$$\tilde{B}(g; z_1,z'_1,\ldots,z_n,z'_n) = 32 D \log_2(n)  \left(\sum_{t=1}^{n}\sup_{g \in \G} |g(z_t)-g(z'_t)|^r \right)^{1/r}  - \alpha \sum_{t=1}^n\sup_{g \in \G} (g(z_t)-g(z'_t))^2 - \phi_n.$$
	Condition \eqref{eq:symmetry_condition_B} of Corollary~\ref{cor:prob_symmetrization} is clearly satisfied for this function. Hence,
	\begin{align*}
		&P\left(\sup_{g\in\G} \sum_{t=1}^n (g(Z_t)-\En_{t-1} g(Z_t)) - \En_{Z_{1:n}'} \tilde{B}(g; Z_1,Z_1',\ldots,Z_n, Z_n')>u\right) \leq  \exp\{1- 2\alpha u\}
	\end{align*}
	We now apply Lemma~\ref{lem:balance_alpha} with $K=0$, $a=0$, 
	$$X(g) = \sum_{t=1}^n (g(Z_t)-\En_{t-1} g(Z_t)) - 32 D \log_2(n)  \En_{Z_{1:n}'}\left(\sum_{t=1}^{n}\sup_{g \in \G} |g(Z_t)-g(Z'_t)|^r \right)^{1/r}- \phi_n$$ 
	$$Y(g) = \En_{Z_{1:n}'} \sum_{t=1}^n\sup_{g \in \G} (g(Z_t)-g(Z'_t))^2$$
	yielding
	\begin{align*}
		&P\left( \sup_{g\in\G} \sum_{t=1}^n (g(Z_t)-\En_{t-1} g(Z_t)) - 128 D \log_2(n)  \En_{Z_{1:n}'}\left(\sum_{t=1}^{n}\sup_{g \in \G} |g(Z_t)-g(Z'_t)|^r \right)^{1/r} - 4\phi_n \right.\\
		&\hspace{2in}\left. - 4u\sqrt{\En_{Z_{1:n}'}\sum_{t=1}^n\sup_{g \in \G} (g(Z_t)-g(Z'_t))^2+1} > 0 \right) \leq \log(n)\exp(1-2u^2).
	\end{align*}
	Using Jensen's inequality, we push the expectation inside the $1/r$ power, completing the proof. 
\end{proof}

\begin{proof}[\textbf{Proof of Theorem~\ref{thm:perfunction}}]
	Define 
	$$\tilde{B}(g; z_1,z'_1,\ldots,z_n,z'_n) = \frac{\alpha}{2}\sum_{t=1}^{n} (g(z_t)-g(z'_t))^2  + C \alpha^{-\frac{2-q}{2+q}} n^{\frac{q}{2+q}}.$$
	Condition \eqref{eq:symmetry_condition_B} of Corollary~\ref{cor:prob_symmetrization} is  satisfied and, therefore, 
	\begin{align*}
		&P\left(\sup_{g\in\G} \sum_{t=1}^n (g(Z_t)-\En_{t-1} g(Z_t)) - \En_{Z_{1:n}'} \tilde{B}(g; Z_1,Z_1',\ldots,Z_n, Z_n')>u\right) \leq  \exp\{1- \alpha u\}.
	\end{align*}
	To invoke Lemma~\ref{lem:balance_alpha}, we choose $a= \frac{2-q}{2+q}$, $K = Cn^{\frac{q}{2+q}},$
	$$X(g)=\sum_{t=1}^n (g(Z_t)-\En_{t-1} g(Z_t)),~~~  Y(g)=\frac{1}{2}\En_{Z_{1:n}'}\sum_{t=1}^{n} (g(Z_t)-g(Z'_t))^2.$$
	Writing $\frac{1}{1+a}=\frac{2+q}{4}$, we conclude that with probability at least $1-e\log (n)\exp\{-u^2\}$
	\begin{align}
		\sup_{g\in\G} \sum_{t=1}^n (g(Z_t)-\En_{t-1} g(Z_t)) &\leq  C'n^{\frac{q}{4}}\left(\frac{1}{2}\En_{Z_{1:n}'}\sum_{t=1}^{n} (g(Z_t)-g(Z'_t))^2 +1\right)^\frac{2-q}{4}  \\
		&+ 4u\sqrt{\frac{1}{2}\En_{Z_{1:n}'}\sum_{t=1}^{n} (g(Z_t)-g(Z'_t))^2 +1} ~.
	\end{align}
\end{proof}

\bibliographystyle{plain}
\bibliography{paper}
\end{document}